\newcommand{\mb}[1]{\mathbf{#1}}
\newcommand{\mc}[1]{\mathcal{#1}}
\newcommand{\mr}[1]{\mathbb{#1}}
\DeclarePairedDelimiter{\norm}{\lVert}{\rVert}
\NewDocumentCommand{\normL}{ s O{} m }{%
  \IfBooleanTF{#1}{\norm*{#3}}{\norm[#2]{#3}}_{L_2(\Omega)}%
}
\def\k{{(k)}}
\newtheorem{Lemma}{Lemma}
\newtheorem{Corollary}{Corollary}
\newtheorem{Theorem}{Theorem}
\title{Semi-Stochastic Frank-Wolfe Algorithms with Away-Steps for Block-Coordinate Structure Problems}
\author{Donald Goldfarb, Garud Iyengar, and Chaoxu Zhou}
\begin{document}
\maketitle
\begin{abstract} 
We propose a semi-stochastic Frank-Wolfe algorithm with away-steps for regularized empirical risk minimization and extend it to problems with block-coordinate structure. Our algorithms use adaptive step-size and we show that they converge linearly in expectation. The proposed algorithms can be applied to many important problems in statistics and machine learning including regularized generalized linear models, support vector machines and many others. In preliminary numerical tests on structural SVM and graph-guided fused LASSO, our algorithms outperform other competing algorithms in both iteration cost and total number of data passes.
\end{abstract} 

\section{Introduction}
\label{intro}
\subsection{Motivations}
The recent trend of using a large number of parameters to model large datasets in machine learning and statistics has created a strong demand for optimization algorithms that have low computational cost per iteration and exploit model structure. Empirical risk minimization is a class of important optimization problems in this area. Such problems can be written as
\begin{align}
\min_{\mb{x} \in \mc{P}} F(\mb{x}) \equiv \frac{1}{n}\sum_{i = 1}^n f_i(\mb{x}), \label{erm_problem}
\end{align}
where $\mc{P}$ is a compact polyhedral set in $\mr{R}^p$ and each $f_i(\cdot)$ is a convex function. A popular approach for solving (\ref{erm_problem}) is the proximal gradient method which solves a projection sub-problem in each iteration. The major drawback of this method is that the projection step can be expensive in many situations. As an alternative, the Frank-Wolfe (FW) algorithm \cite{FW56}, also known as the conditional gradient method, solves a linear optimization sub-problem in each iteration which is much faster than the standard projection technique when the feasible set is a simple polytope \cite{N15}. On the one hand, when the number of observations $N$ in problem (\ref{erm_problem}) is large, calculating the gradient of $F(\cdot)$ in every FW iteration becomes a computationally intensive task. The question of whether `cheap' stochastic gradient can be used as a surrogate in FW immediately arises. On the other hand, when the dimension of the parameter space $d$ in problem (\ref{erm_problem}) is large, taking advantage of any underlying structure in the problem can further accelerate the algorithm. In this paper, we present a linearly convergent semi-stochastic Frank-Wolfe algorithm with away-steps for empirical risk minimization and extend it to problems with block-coordinate structure. Specifically, we apply the proposed algorithm to structural support vector machine and graph-guided fused lasso problems.
\subsection{Contributions}
The contribution of this paper has two aspects. On the theoretical side, it presents the first stochastic conditional gradient algorithm that converges linearly in expectation. In addition, this algorithm uses an adaptive step size rather than one determined by exact line search. On the application side, the algorithm can be applied to various statistical and machine learning problems including support vector machine, multi-task learning, regularized generalized linear models and many others. Furthermore, when the problem has a block-coordinate structure, performance of this algorithm can be enhanced by extending it to a randomized coordinate version which solves several small sub-problems instead of a large problem.
\subsection{Related Work}
The Frank-Wolfe algorithm was proposed sixty years ago  \cite{FW56} for minimizing a convex function over a polytope and is known to converge at an $O(1/k)$ rate. In \cite{LP66} the same convergence rate was proved for compact convex constraints. When both objective function and the constraint set are strongly convex, \cite{GH15} proved that the Frank-Wolfe algorithm has an  $O(1/k^2)$ rate of convergence with a properly chosen step size.  Motivated by removing the influence of ``bad" visited vertices, the away-steps variant of the Frank-Wolfe algorithm was proposed in \cite{Wol70}. Later, \cite{GM86} showed that this variant converges linearly under the assumption that the objective function is strongly convex and the optimum lies in the interior of the constraint  polytope. Recently, \citep{GH13} and \cite{LJJ13} extended the linear convergence result by removing the assumption of the location of the optimum and \cite{BS15} extended it further by relaxing the strongly convex objective function assumption. Stochastic Frank-Wolfe algorithms have been considered by \cite{LAN13} and \cite{LWM15} in which an $O(1/k)$ rate of convergence in expectation is proved. In addition, the Frank-Wolfe algorithm has been applied to solve several different classes of problems, including non-linear SVM \cite{OG10}, structural SVM \cite{LJJSP13}, and comprehensive principal component pursuit \cite{MZWG15} among many others.
\subsection{Notation}
Bold letters are used to denote vectors and matrices, normal fonts are used to denote scalers and sets. Subscripts represent elements of a vector,  while superscripts with parentheses represent iterates of the vector, i.e. $\mathbf{x}^{(k)}$ is a vector at iteration $k$, and $x_i^{(k)}$ is the $i$-th element of $\mathbf{x}^{(k)}$. The cardinality of the set $I$ is denoted by $\vert I \vert$. Given two vectors $\mathbf{x}, \mathbf{y} \in \mathbb{R}^n$, their inner product is denoted by $\langle \mathbf{x}, \mathbf{y} \rangle$. Given a matrix $\mathbf{A}\in \mathbf{R}^{m \times n}$ and vector $\mathbf{x} \in \mathbb{R}^n$, $\norm{\mathbf{A}}$ denotes the spectral norm of $\mathbf{A}$, and $\norm{\mathbf{x}}$ denotes the $l_2$ norm of $\mathbf{x}$. $\mathbf{A}^\top$ represent the transpose of $\mathbf{A}$. We denote the $i$th row of a given matrix $\mathbf{A}$ by $\mathbf{A}_i$, and for a given set $I \subset \{1, \ldots, m\}$, $\mathbf{A}_I \in \mathbf{R}^{\vert I \vert \times n}$is the submatrix of $\mathbf{A}$ such that $(\mathbf{A}_I)_j = \mathbf{A}_{I_j}$ for any $j = 1, \ldots, \vert I \vert$. Given matrices $\mathbf{A} \in \mathbb{R}^{n \times m}$ and $\mathbf{B} \in \mathbb{R}^{n \times k}$, the matrix $[\mathbf{A}, \mathbf{B}] \in \mathbb{R}^{n \times (m + k)}$ is their horizontal concatenation. Let $\lceil x \rceil$ be the ceiling function that rounds $x$ to the smallest integer larger than $x$.

\section{Stochastic Condtional Gradient Method for Empirical Risk Minimization}
\subsection{Problem description, notation and assumptions}
Consider the minimization problem
\begin{align}
\min_{\mb{x} \in \mc{P}} \Big\{F(\mb{x}) \equiv \frac{1}{n}\sum_{i = 1}^n f_i(\mb{a}^\top_i \mb{x}) + \langle \mb{b}, \mb{x} \rangle \Big\}, \label{general_problem} \tag{P1}
\end{align}
where $\mathcal{P}$ is a non-empty compact polyhedron given by $\mathcal{P} = \{x \in \mathbb{R}^p : \mathbf{Cx} \leq \mb{d}\}$ for some $\mathbf{C} \in \mathbb{R}^{m \times p}$, $\mathbf{d} \in \mathbb{R}^m$. For every $i = 1, \ldots, n$, $\mb{a}_i \in \mr{R}^p$, and $f_i : \mr{R} \rightarrow \mr{R}$ is a strongly convex function with parameter $\sigma_i$ and has a Lipschitz continuous gradient with constant $L_i$. $\mb{b}$ in the linear term of (\ref{general_problem}) is a vector in $\mr{R}^p$. Note that the gradient of $F(\cdot)$ is also Lipschitz continuous with constant $L \leq (\sum_{i=1}^nL_i \norm{a_i}) / n$. However, because of the affine transformation in the argument of each $f_i(\cdot)$, $F(\cdot)$ may not be a strongly convex function.

\noindent\textbf{Remark:}
\begin{description}
\item[1] Many statistics and machine learning problems can be modeled as problem (\ref{general_problem}). For example:
\begin{description}
\item[i] The LASSO problem : $\min_{\boldsymbol\beta \in \mathbb{R}^p} \sum_{i=1}^n (y_i - \mb{x}_i^\top\boldsymbol\beta)^2 + \lambda \vert\vert \boldsymbol\beta \vert\vert_1$, where $n$ is the sample size and for $i = 1, 2,\ldots, n$, $y_i$'s are responses and $\mb{x}_i$'s are the covariates. $\boldsymbol\beta$ is the regression coefficient to be estimated by solving the minimization problem and $\lambda$ is a regularization parameter for sparsity.
 
\item[ii]$l_1$-Regularized Poisson Regression:  $\min_{\boldsymbol\beta \in \mathbb{R}^p} \sum_{i = 1}^n \exp\{\mathbf{x}_i^\top\boldsymbol\beta\} - y_i\mathbf{x}_i^\top\boldsymbol\beta +\log(y_i!) +  \lambda \vert\vert \boldsymbol\beta \vert\vert_1 $, where $n$ is the sample size and for $i = 1, 2, \ldots, n$, $\mb{x}_i$'s are the covariates and $y_i \in \mr{N}$ are the responses; that is, $y_i$'s are obtained from event counting. $\boldsymbol\beta$ is the regression coefficient to be estimated by solving the minimization problem and $\lambda$ is a regularization parameter for sparsity.

\item[iii] $l_1$-Regularized Logistic Regression: $\min_{\boldsymbol\beta \in \mathbb{R}^p}\sum_{i = 1}^n \log(1 + \exp\{-y_i\mathbf{x}_i^\top\boldsymbol\beta\})  + \lambda \vert\vert \boldsymbol\beta \vert\vert_1$, where $n$ is the sample size and for $i = 1, 2, \ldots, n$, $\mb{x}_i$'s are the covariates and $y_i \in \{0, 1\}$ are the responses; that is, $y_i$'s are binary labels obtained from classification. $\boldsymbol\beta$ is the regression coefficient to be estimated by solving the minimization problem and $\lambda$ is a regularization parameter for sparsity.

\item[iv] Dual Problem of $l_1$-loss Support Vector Machine: given training label-instance pairs $(y_i, \mb{z}_i)$ for $i = 1, \ldots, l$, the problem is formulated as 
\begin{align*}
&\text{minimize}_{\boldsymbol\alpha}\quad  \frac{1}{2}\boldsymbol\omega^\top\boldsymbol\omega - \boldsymbol1^\top\boldsymbol\alpha\\
&\text{subject to}\quad  \boldsymbol\omega = \mathbf{A}\boldsymbol\alpha, \\
&\quad \quad \quad \quad \quad \; 0 \leq \alpha_i \leq C, i = 1, \ldots, l.
\end{align*}
where $\mb{A} = [y_1\mb{z}_1, \ldots, y_l\mb{z}_l]$, $\boldsymbol1$ is the vector of ones, and $C$ is a given upper bound. This problem can be transformed to the form of (\ref{general_problem}) by replacing the $\boldsymbol\omega$ is in the objective function by $\mb{A}\boldsymbol\alpha$.
\end{description}
\item[2] The objective functions in the unconstrained problems (i),(ii) and (iii) all involve a non-smooth regularization term. They however, can be modeled as (\ref{general_problem}). For example, in the LASSO problem, we can always take $\boldsymbol\beta = \boldsymbol0$ to get an upper bound on the function value and add the constraint $\norm{\boldsymbol\beta}_1 \leq \sum_{i=1}^n y_i^2$ to the original problem. Then by introducing new variables $u_i$, $i = 1, \ldots, p$, we can express $\norm{\boldsymbol\beta}_1$ as $\sum_{i=1}^p u_i$ after adding the constraints $\beta_i \leq u_i$ and $-\beta_i \leq u_i$ for all $i = 1, \ldots, p$ to the problem. We can apply the same method to $l_1$-regularized logistic regression and $l_1$-regularized Poisson regression problems.
\end{description}

\subsection{The Main Result}
Let $\mathcal{O}: \mathbb{R}^p \rightarrow \mathcal{P}$ be a linear oracle that given $\mathbf{c} \in \mathbb{R}^p$, returns $\mathbf{z} = \mathcal{O}(\mathbf{c}) \in \mathcal{P}$ that $\mathbf{c}^\top\mb{z} \leq \mathbf{c}^\top \mb{x}$ for every $\mb{x} \in \mc{P}$; i.e. $\mb{z} = \arg\min\{\mb{c}^\top \mb{x} \; \vert \; \mb{x} \in \mc{P}\}$. Let $V$ be the set of vertices of polytope $\mc{P}$.

\begin{algorithm}
\caption{Semi-Stochastic Frank-Wolfe algorithm with Away-Steps}
\label{cond_grad_1}
\begin{algorithmic}[1]
\STATE {\bfseries Input:}  $\mathbf{x}^{(1)} \in V$, $f_i$, $\mb{a}_i$, $\mb{b}$ and $L$
\STATE Set $\mu^{(1)}_{\mathbf{x}^{(1)}} = 1$, $\mu_\mathbf{v}^{(1)} = 0$ for any $\mathbf{v} \in \mathcal{V} / \{\mathbf{x}^{(1)}\}$ and $U^{(1)} = \{\mathbf{x}^{(1)}\}$.
\FOR{$k = 1, 2, \ldots$}
\STATE Uniformly sample $\mc{J} =\{j_1, \ldots, j_{m_k}\}$ from $\{1, \ldots, n\}$ without replacement, and denote  $\mathbf{g}^{(k)} = \frac{1}{m_k}\sum_{i=1}^{m_k}f'_{j_i}(\mb{a}_{j_i}^\top\mb{x}^{(k)})\mb{a}_{j_i} + \mb{b}$.
\STATE Compute $\mathbf{p}^{(k)} = \mathcal{O}(\mathbf{g}^{(k)})$.
\STATE Compute $\mathbf{u}^{(k)} \in {\arg\!\max}_{\mathbf{v} \in U^{(k)}}\langle \mathbf{g}^{(k)}, \mathbf{v} \rangle$.
\IF{$\langle \mathbf{g}^{(k)}, \mathbf{p}^{(k)}  + \mathbf{u}^{(k)}- 2\mathbf{x}^{(k)} \rangle \leq 0$}
\STATE Set $\mathbf{d}^{(k)} = \mathbf{p}^{(k)} - \mathbf{x}^{(k)}$ and $\gamma^{(k)}_{\text{max}} = 1$. 
\ELSE
\STATE Set $\mathbf{d}^{(k)} = \mathbf{x}^{(k)} - \mathbf{u}^{(k)}$ and $\gamma^{(k)}_{\text{max}} = \frac{\mu^{(k)}_{\mathbf{u}^{(k)}}}{1 - \mu^{(k)}_{\mathbf{u}^{(k)}}}$.
\ENDIF
\STATE Set $\gamma^{(k)} = \min\{-\frac{\langle \mb{g}^{(k)}, \mb{d}^{(k)}\rangle}{L\norm{\mb{d}^{(k)}}^2}, \gamma^{(k)}_\text{max}\}$
\STATE Set $\mathbf{x}^{(k+1)} = \mathbf{x}^{(k)} + \gamma^{(k)}\mathbf{d}^{(k)}$.
\STATE Update $U^{(k + 1)}$ and $\mathbf{\mu}^{(k+1)}$ by Procedure VRU.
\ENDFOR
\STATE{\bfseries Return:} $\mathbf{x}^{(k+1)}$.
\end{algorithmic}
\end{algorithm}
\noindent The following algorithm updates a vertex representation of the current iterate and is called in Algorithm \ref{cond_grad_1}.
\begin{center}
\begin{algorithm}[h]
\caption{Procedure Vertex Representation Update (VRU)}
\begin{algorithmic}[1]
\STATE {\bfseries Input:} $\mb{x}^\k$, $(U^\k, \boldsymbol\mu^\k)$, $\mb{d}^\k$, $\gamma^\k$, $\mb{p}^\k$ and $\mb{v}^\k$.
\IF{$\mb{d}^\k = \mb{x}^\k - \mb{u}^\k$}
\STATE Update $\mu^\k_\mb{v} = \mu_\mb{v}^\k(1 + \gamma^\k)$ for any $\mb{v} \in U^\k / \{\mb{u}^\k\}$.
\STATE Update $\mu^{(k+1)}_{\mb{u}^\k} = \mu^\k_{\mb{u}^\k}(1 + \gamma^\k) - \gamma^\k$.
\IF{$\mu^{(k+1)}_{\mb{u}^\k} = 0$}
\STATE Update $U^{(k+1)} = U^\k /\{\mb{u}^\k\}$
\ELSE
\STATE Update $U^{(k+1)} = U^\k$
\ENDIF
\ENDIF
\STATE Update $\mu_\mb{v}^{(k+1)} = \mu^\k_\mb{v}(1 - \gamma^\k)$ for any $\mb{v} \in U^\k / \{\mb{p}^\k\}$.
\STATE Update $\mu^{(k+1)}_{\mb{p}^\k} = \mu^\k_{\mb{p}^\k}(1 - \gamma^\k) + \gamma^\k$.
\IF{$\mu_{\mb{p}^\k}^{(k+1)} = 1$}
\STATE Update $U^{(k+1)} = \{\mb{p}^\k\}$.
\ELSE
\STATE Update $U^{(k+1)} = U^\k \cup \{\mb{p}^\k\}$.
\ENDIF
\STATE (Optional) Carath\'eodory's theorem can be applied for the vertex representation of $\mb{x}^{(k+1)}$ so that $\vert U^{(k+1)}\vert = p+1$ and $\boldsymbol\mu^{(k+1)} \in \mr{R}^{p+1}$.
\STATE {\bfseries Return:} $(U^{(k+1)}, \boldsymbol\mu^{(k+1)})$
\end{algorithmic}
\end{algorithm}
\end{center}

\noindent We need to introduce some definitions before presenting the theorems in this paper. Write $\mb{A} = [\mb{a}_1, \mb{a}_2, \ldots, \mb{a}_n]^\top$ and $\mb{G}(\mb{x})= (1/n)[f_1'(\mb{a}_1^\top\mb{x}), \ldots, f_n'(\mb{a}_n^\top\mb{x})]^\top $. Define $D = \sup\{\norm{\mb{x} - \mb{y}}\, \vert \, \mb{x}, \mb{y} \in \mc{P}\}$ and $G = \sup\{\norm{\mb{G}(\mb{x})}\, \vert\, \mb{x} \in \mc{P}\}$. It follows from compactness of $\mc{P}$ and continuity of $F(\cdot)$ that $D < \infty$ and $G < \infty$. Write $\kappa =\theta^2 \{D\norm{\mb{b}} + 3GD\norm{\mb{A}} + \frac{2n}{\sigma_F}(G^2+1)\}$ where $\sigma_F = \min\{\sigma_1, \ldots, \sigma_n\} > 0$ and $\theta$ is the Hoffman constant associated with matrix $[\mb{C}^\top,\mb{A}^\top, \mb{b}^\top]$ that is $\theta = \max\{1 / \lambda_{\min}(\mb{BB}^\top) \;\vert \; \mb{B} \in B\}$, $\lambda_{\min}(\mb{BB}^\top)$ is the smallest eigenvalue of $\mb{BB}^\top$, where $B$ is the set of all matrices constructed by taking linearly independent rows from the matrix $[\mb{C}^\top,\mb{A}^\top, \mb{b}^\top]$. We denote by $I(\mb{x})$, the index set of active constraints at $\mb{x}$; that is,  $I(\mb{x}) = \{i \in {1, \ldots, m} \; \vert \; \mb{C}_i\mb{x} = d_i\}$. In a similar way, we define the set of active constraints for a set $U$ by  $I(U) = \{i \in \{1, \ldots, m\} \; \vert \; \mb{C}_i\mb{v} = d_i, \forall \mb{v} \in U\} = \cap_{\mb{v} \in U}I(\mb{v})$. Let $V$ be the set of vertices of $\mc{P}$, then define $\Omega_\mathcal{P} = \frac{\zeta}{\phi}$ where 
\begin{align*}
\zeta &= \min_{\mathbf{v} \in V, i \in \{1, \ldots, m\}: d_i > \mathbf{C}_i \mathbf{v}} (d_i - \mathbf{C}_i\mathbf{v}), \\
\phi &= \max_{i \in \{1, \ldots, m\} / I(V)}\norm{\mathbf{C}_i}.
\end{align*}
\noindent\textbf{Remark:} The vertex representation update procedure can also be implemented by using Carath\'eodory's theorem so that each $\mb{x}^\k$ can be written as a convex combination of at most $N = p+1$ vertices of the polytope $\mc{P}$. Then the set of vertices $U^{(k)}$ and their corresponding weights $\boldsymbol\mu^{(k)}$ can be updated according to the convex combination.
\begin{Theorem}
Let $\{\mathbf{x}^{(k)}\}_{k \geq 1}$ be the sequence generated by Algorithm \ref{cond_grad_1} for solving Problem $(\ref{general_problem})$, $N$ be the number of vertices used to represent $\mb{x}^{(k)}$ (if VRU is implemented by using Carath\'eodory's theorem, $ N = p + 1$, otherwise $N = \vert V \vert$) and $F^*$ be the optimal value of the problem. Let $\rho = \Omega_{\mathcal{P}}^2 /\{8N^2 \kappa D\max (G, LD)\}$ and set $m_k = \lceil n / (1 + n(1 - \rho)^{2\alpha k})\rceil$ for some $0 < \alpha < 1$. Let $D^{(k)}$ be the event that the algorithm removes a vertex from the current convex combination at iteration $k$, that is, the algorithm performs a `drop step' at iteration $k$. Assume $\mr{P}(D^{(k)}) \leq (1 - \rho)^{\beta k}$ for some $ 0 < \beta < 1$.
Then for every $k \geq 1$
\begin{align}\label{rate_of_convergence}
\mathbb{E}\{F(\mathbf{x}^{(k+1)}) - F^*\} \leq C_3(1 - \rho)^{\min (\alpha, \beta)k}
\end{align}
where
\begin{align*}
C_3 = F(\mb{x}^{(1)}) - F^*  + \frac{G\sqrt{C_2}}{L\{1 - (1 - \rho)^{1 -\alpha}\}} + \frac{G^2}{2L\{1 - (1 - \rho)^{1 - \beta}\}}
\end{align*}
and 
\begin{align*}
C_2 &= \sup_{\mb{x} \in \mc{P}}[ \frac{1}{n}\sum_{i=1}^n \{f_i'(\mb{a}_i^\top\mb{x})\}^2\mb{a}_i^\top\mb{a}_i \\
&\quad - \frac{1}{n(n-1)}\sum_{i \neq j}f_i'(\mb{a}_i^\top\mb{x})f_j'(\mb{a}_i^\top\mb{x})\mb{a}^\top_i\mb{a}_j ] < \infty.
\end{align*}
\end{Theorem}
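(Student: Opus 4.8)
The plan is to merge the smoothness-based descent analysis of away-step Frank--Wolfe with a deterministic error bound of Beck--Shtern type (to compensate for the lack of strong convexity of $F$), a control of the bias from using the mini-batch surrogate $\mb{g}^{(k)}$ in place of $\nabla F(\mb{x}^{(k)})$, and a bookkeeping of the non-contracting ``drop steps.'' Write $\mb{e}^{(k)}=\nabla F(\mb{x}^{(k)})-\mb{g}^{(k)}$ and $h_k=\mr{E}\{F(\mb{x}^{(k)})-F^*\}$. First I would apply $L$-smoothness along $\mb{d}^{(k)}$, $F(\mb{x}^{(k+1)})\le F(\mb{x}^{(k)})+\gamma^{(k)}\langle\nabla F(\mb{x}^{(k)}),\mb{d}^{(k)}\rangle+\tfrac{L}{2}(\gamma^{(k)})^2\norm{\mb{d}^{(k)}}^2$, substitute $\nabla F=\mb{g}^{(k)}+\mb{e}^{(k)}$, and split on whether $\gamma^{(k)}$ equals the unconstrained minimizer $-\langle\mb{g}^{(k)},\mb{d}^{(k)}\rangle/(L\norm{\mb{d}^{(k)}}^2)$ or $\gamma^{(k)}_{\max}$. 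Note the test in Algorithm~\ref{cond_grad_1} always forces $\langle\mb{g}^{(k)},\mb{d}^{(k)}\rangle\le 0$, so $\gamma^{(k)}\ge 0$ and $\gamma^{(k)}$ never exceeds that unconstrained minimizer. Completing the square gives, on a ``good'' step, $F(\mb{x}^{(k+1)})\le F(\mb{x}^{(k)})-\tfrac{\langle\mb{g}^{(k)},\mb{d}^{(k)}\rangle^2}{2L\norm{\mb{d}^{(k)}}^2}+\gamma^{(k)}\langle\mb{e}^{(k)},\mb{d}^{(k)}\rangle$; the same manipulation shows that on a drop step the deterministic part is $\le\tfrac12\gamma^{(k)}\langle\mb{g}^{(k)},\mb{d}^{(k)}\rangle\le0$, so there $F(\mb{x}^{(k+1)})-F(\mb{x}^{(k)})\le\gamma^{(k)}\langle\mb{e}^{(k)},\mb{d}^{(k)}\rangle\le D\norm{\mb{e}^{(k)}}$ (using $\gamma^{(k)}\norm{\mb{d}^{(k)}}=\norm{\mb{x}^{(k+1)}-\mb{x}^{(k)}}\le D$). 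The residual error term in the good case I would bound by $\tfrac{G}{L}\norm{\mb{e}^{(k)}}$ (up to constants depending on $\mb{A},\mb{b}$) via Cauchy--Schwarz and a uniform bound on $\norm{\mb{g}^{(k)}}$.

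Next I would turn the good-step descent into a contraction. The test picks $\mb{d}^{(k)}$ so that $-\langle\mb{g}^{(k)},\mb{d}^{(k)}\rangle=\max\{\langle\mb{g}^{(k)},\mb{x}^{(k)}-\mb{p}^{(k)}\rangle,\ \langle\mb{g}^{(k)},\mb{u}^{(k)}-\mb{x}^{(k)}\rangle\}\ge\tfrac12\langle\mb{g}^{(k)},\mb{u}^{(k)}-\mb{p}^{(k)}\rangle$, i.e. at least half the pairwise gap. I would then invoke the polyhedral facial-distance lemma (the source of $\Omega_{\mc{P}}$, $\zeta$, $\phi$ and the vertex-count $N$): for an arbitrary linear functional, the pairwise gap between $\mb{p}^{(k)}=\mc{O}(\mb{g}^{(k)})$ and the worst active atom $\mb{u}^{(k)}\in U^{(k)}$ dominates $\Omega_{\mc{P}}\,\langle\mb{g}^{(k)},\mb{x}^{(k)}-\mb{x}^*\rangle/(N\norm{\mb{x}^{(k)}-\mb{x}^*})$, hence $\langle\mb{g}^{(k)},\mb{d}^{(k)}\rangle^2/\norm{\mb{d}^{(k)}}^2\ge\tfrac{\Omega_{\mc{P}}^2}{4N^2}\,\langle\mb{g}^{(k)},\mb{x}^{(k)}-\mb{x}^*\rangle_+^2/\norm{\mb{x}^{(k)}-\mb{x}^*}^2$. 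Convexity gives $\langle\mb{g}^{(k)},\mb{x}^{(k)}-\mb{x}^*\rangle\ge(F(\mb{x}^{(k)})-F^*)-D\norm{\mb{e}^{(k)}}$, and the Beck--Shtern error bound --- valid because $F(\mb{x})=\tfrac1n\sum_i f_i(\mb{a}_i^\top\mb{x})+\langle\mb{b},\mb{x}\rangle$ with each $f_i$ $\sigma_i$-strongly convex, $\mc{P}$ polyhedral and $\theta$ the Hoffman constant of $[\mb{C}^\top,\mb{A}^\top,\mb{b}^\top]$ --- gives $\norm{\mb{x}^{(k)}-\mb{x}^*}^2\le\kappa\,(F(\mb{x}^{(k)})-F^*)$. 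Substituting (and treating the regime $F(\mb{x}^{(k)})-F^*\le D\norm{\mb{e}^{(k)}}$ by absorbing it into the error) yields, on a good iteration, $F(\mb{x}^{(k+1)})-F^*\le(1-\rho)\,(F(\mb{x}^{(k)})-F^*)+O(\tfrac{G}{L}\norm{\mb{e}^{(k)}})$ with exactly $\rho=\Omega_{\mc{P}}^2/\{8N^2\kappa D\max(G,LD)\}$, the $\max(G,LD)$ arising from bounding the $\mb{e}^{(k)}$ cross-terms and from comparing $\gamma^{(k)}_{\max}$ with the unconstrained step on non-drop ``bad'' steps.

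Finally I would control the error and drop-step terms and solve the resulting scalar recursion. Uniform sampling of $m_k$ indices without replacement makes $\mb{g}^{(k)}$ conditionally unbiased with finite-population variance $\mr{E}[\norm{\mb{e}^{(k)}}^2\mid\mb{x}^{(k)}]\le\tfrac{n-m_k}{m_k(n-1)}C_2$, and $m_k=\lceil n/(1+n(1-\rho)^{2\alpha k})\rceil$ forces $\tfrac{n-m_k}{m_k(n-1)}\le(1-\rho)^{2\alpha k}$, so $\mr{E}\norm{\mb{e}^{(k)}}\le\sqrt{C_2}\,(1-\rho)^{\alpha k}$ by Jensen. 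Since good steps occur off the event $D^{(k)}$ and $\mr{P}(D^{(k)})\le(1-\rho)^{\beta k}$, taking total expectations gives
\[
h_{k+1}\le(1-\rho)h_k+\frac{G\sqrt{C_2}}{L}(1-\rho)^{\alpha k}+\frac{G^2}{2L}(1-\rho)^{\beta k},
\]
the last term collecting the drop-step increase $\mr{P}(D^{(k)})\times O(G^2/L)$ and the lost contraction $\rho\,\mr{E}[(F(\mb{x}^{(k)})-F^*)\mb{1}_{D^{(k)}}]$. Unrolling this recursion, bounding $\sum_{j=0}^{k}(1-\rho)^{k-j}(1-\rho)^{\alpha j}\le(1-\rho)^{\alpha k}/\{1-(1-\rho)^{1-\alpha}\}$ (and the analogous $\beta$-sum), and using $(1-\rho)^k\le(1-\rho)^{\min(\alpha,\beta)k}$ together with $h_1=F(\mb{x}^{(1)})-F^*$, produces precisely $h_{k+1}\le C_3(1-\rho)^{\min(\alpha,\beta)k}$.

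The main obstacle is the second step: making the away-step geometry and the Beck--Shtern error bound interact cleanly with a \emph{biased, stochastic} search direction. In the deterministic analysis one lower-bounds the Frank--Wolfe/away gap of $\nabla F$ directly; here the gap is taken against $\mb{g}^{(k)}$, so one must re-derive the facial-distance inequality for an arbitrary linear functional, transfer from $\langle\mb{g}^{(k)},\cdot\rangle$ to $\langle\nabla F(\mb{x}^{(k)}),\cdot\rangle$ while keeping the error \emph{linear} in $\norm{\mb{e}^{(k)}}$ rather than quadratic, and separately handle iterations where the stochastic error swamps the suboptimality so that $\langle\mb{g}^{(k)},\mb{x}^{(k)}-\mb{x}^*\rangle$ is not usefully positive. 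Matching all the constants so that the contraction factor is exactly $1-\rho$ with the stated $\rho$ is the most delicate bookkeeping; the remaining pieces are standard smoothness algebra and routine geometric-series summation.
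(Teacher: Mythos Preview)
Your overall architecture---descent lemma, split on step-size regime, geometric error bound in place of strong convexity, variance control of $\mb{e}^{(k)}=\nabla F(\mb{x}^{(k)})-\mb{g}^{(k)}$, and unrolling the scalar recursion $h_{k+1}\le(1-\rho)h_k+\tfrac{G\sqrt{C_2}}{L}(1-\rho)^{\alpha k}+\tfrac{G^2}{2L}(1-\rho)^{\beta k}$---matches the paper's proof, and your treatment of the four cases $(A^{(k)}),(B^{(k)}),(C^{(k)}),(D^{(k)})$ is essentially the same case split the paper uses.

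Where you diverge is precisely at the step you flag as the main obstacle, and the paper handles it by a different (and simpler) device. You apply the facial-distance lemma \emph{pathwise} to the stochastic functional $\mb{g}^{(k)}$, obtain
\[
\frac{\langle\mb{g}^{(k)},\mb{d}^{(k)}\rangle^2}{\norm{\mb{d}^{(k)}}^2}\;\ge\;\frac{\Omega_{\mc{P}}^2}{4N^2}\,\frac{\langle\mb{g}^{(k)},\mb{x}^{(k)}-\mb{x}^*\rangle_+^2}{\norm{\mb{x}^{(k)}-\mb{x}^*}^2},
\]
and then transfer to $\nabla F$ via $\langle\mb{g}^{(k)},\mb{x}^{(k)}-\mb{x}^*\rangle\ge(F(\mb{x}^{(k)})-F^*)-D\norm{\mb{e}^{(k)}}$, which forces you to carve out the regime where the noise swamps the suboptimality and to fight cross-terms after squaring. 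The paper instead takes expectations \emph{before} invoking the geometric lemma: using $\mr{E}[\langle\mb{g}^{(k)},\mb{d}^{(k)}\rangle^2]\ge\tfrac14\,\mr{E}\bigl[\mr{E}(\langle\mb{g}^{(k)},\mb{u}^{(k)}-\mb{p}^{(k)}\rangle\mid\mc{F}^{(k)})\bigr]^2$ (conditional Jensen), then $\mr{E}[\max_{\mb{p},\mb{u}}\langle\mb{g}^{(k)},\mb{u}-\mb{p}\rangle\mid\mc{F}^{(k)}]\ge\max_{\mb{p},\mb{u}}\mr{E}[\langle\mb{g}^{(k)},\mb{u}-\mb{p}\rangle\mid\mc{F}^{(k)}]=\max_{\mb{p},\mb{u}}\langle\nabla F(\mb{x}^{(k)}),\mb{u}-\mb{p}\rangle$ by unbiasedness. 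At this point the facial-distance corollary and the Hoffman-based error bound (your ``Beck--Shtern'' step) are applied to the \emph{deterministic} gradient $\nabla F(\mb{x}^{(k)})$, giving directly
\[
\mr{E}\langle\mb{g}^{(k)},\mb{d}^{(k)}\rangle^2\;\ge\;\frac{\Omega_{\mc{P}}^2}{4N^2\kappa}\,\mr{E}\{F(\mb{x}^{(k)})-F^*\}
\]
with no $\norm{\mb{e}^{(k)}}$ contamination in the contraction term. The only place $\mb{e}^{(k)}$ survives is the additive cross-term $\gamma^{(k)}\langle\mb{e}^{(k)},\mb{d}^{(k)}\rangle$ from the descent lemma, bounded by $\tfrac{G}{L}\norm{\mb{e}^{(k)}}$. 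This makes the constant $\rho$ fall out cleanly and eliminates the case analysis you were worried about. Your route can likely be pushed through, but the paper's conditional-expectation/Jensen swap is the idea that makes the bookkeeping trivial.
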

\noindent Proof of the Theorem 1 is presented in the appendix. It is worth noting that by dividing both sides of (\ref{rate_of_convergence}) by $F^{(1)} - F^*$, this linear convergence result for Algorithm 1 depends on relative function values. Therefore, the linear convergence of the proposed algorithm is invariant to scaling of the function.

\noindent\textbf{Remarks}: The constant $\rho$ depends on the "vertex-face" distance of a polytope as discussed in \cite{BS15}. This also gives some intuition as to why constraint set $\mc{P}$ has to be a polytope for linear convergence of the Frank-Wolfe algorithm with away-steps. When the boundary of the constraint set is `curved' as is the case for the $l_2$-ball, every point on the boundary is an extreme point. Then, faces can always be constructed so that the vertex-face distance is infinitesimal. Thus, we cannot get linear convergence when we apply the proof to a general convex constraint.

\section{Block Coordinate Semi-Stochastic Frank-Wolfe Algorithm with Away-Steps}
In this section, we assume the domain $\mc{P}$ takes the form $\mc{P} = \mc{P}_{[1]} \times \mc{P}_{[2]} \times \cdots \times \mc{P}_{[q]}$, where each $\mc{P}_{[i]}$ is a compact polytope that can be expressed as $\mc{P}_{[i]} = \{\mb{x} \in \mr{R}^{p_i} \, \vert \, \mb{C}_{[i]}\mb{x} \leq \mb{d}_{[i]}\}$ where $ \mb{C}_{[i]} \in \mr{R}^{m_i \times p_i}$, $\mb{d}_{[i]} \in \mr{R}^{m_i}$, $\sum_{i = 1}^q m_i = m$ and $\sum_{i=1}^q p_i = p$. Hence, $\mc{P}$ still has the polytopic representation $\mc{P} = \{\mb{x} \in \mr{R}^p \: \vert \: \mb{C}\mb{x} \leq \mb{d}\}$ where
\begin{align*}
\mb{C}= \begin{bmatrix}
\mb{C}_{[1]} & & &\\
& \mb{C}_{[2]} & & \\
& &\ddots & \\
& & & \mb{C}_{[q]}
\end{bmatrix} \quad \quad \text{ and }\quad  \quad \mb{d} = \begin{bmatrix}
\mb{d}_{[1]}\\
\mb{d}_{[2]}\\
\vdots\\
\mb{d}_{[q]}
\end{bmatrix}.
\end{align*}
Examples of such Cartesian product constraints include the dual problem of structural SVMs, fitting marginal models in multivariate regression and multi-task learning. Let $V_{[i]}$ be the set of vertices of polytope $\mc{P}_{[i]}$. We use subscripts $[i]$ to denote vectors, matrices, sets and other quantities correspond to the $i$-th block of constraints. Specifically,
let $\mb{x}_{[i]} \in \mr{R}^{p_i}$ and $\mb{x} = [\mb{x}_{[1]}^\top, \mb{x}_{[2]}^\top, \cdots, \mb{x}_{[q]}^\top]^\top \in \mr{R}^p$. Define $\mb{U}_{[i]} \in \mr{R}^{p\times p_i}$ as the sub-matrices of $p \times p$ identity matrix that corresponds to the $i$-th block of the product domain, that is  $[\mb{U}_{[1]},\mb{U}_{[2]}, \ldots, \mb{U}_{[q]}] = \mb{I}_{p \times p}$. Hence $\mb{x}_{[i]} = \mb{U}_{[i]}^\top\mb{x}$ and $\mb{x} = \sum_{i = 1}^q \mb{U}_{[i]}\mb{x}_{[i]}$. Let $\mc{O}_{[i]}$ be the linear oracle corresponding to the $i$-th block polytope in a lower dimension space. With the above notation, we are ready to present the block-coordinate version of the semi-stochastic Frank-Wolfe algorithm with away-steps and prove its linear convergence.

\begin{algorithm}[t]
\caption{Block Coordinate Semi-Stochastic Frank-Wolfe Algorithm with Away-Steps}
\label{block_cond_grad_1}
\begin{algorithmic}[1]
\STATE {\bfseries Input:}  $\mathbf{x}^{(1)} \in V_{[1]} \times \cdots \times V_{[q]}$, $f_i$, $\mb{a}_i$, $\mb{b}$ and $L$
\STATE Let $\mu^{(1)}_{\mathbf{x}^{(1)}_{[i]}} = 1$, $\mu_\mathbf{v}^{(1)} = 0$ for any $\mathbf{v} \in V_{[i]} / \{\mathbf{x}^{(1)}_{[i]}\}$ and $U^{(1)}_{[i]} = \{\mathbf{x}^{(1)}_{[i]}\}$.
\FOR{$k = 1, 2, \ldots$}
\STATE Uniformly sample $\mc{J} =\{j_1, \ldots, j_{m_k}\}$ from $\{1, \ldots, n\}$ without replacement, and denote  $\mathbf{g}^{(k)} = \frac{1}{m_k}\sum_{i=1}^{m_k}f'_{j_i}(\mb{a}_{j_i}^\top\mb{x}^{(k)})\mb{a}_{j_i} + \mb{b}$.
\STATE Uniformly sample $\mc{L} = \{l_1, \ldots, l_r \}$ from $\{1, \ldots, q\}$ without replacement
\FOR{$i = 1, 2, \ldots, r$, in parallel}
	\STATE Compute $\mathbf{p}^{(k)}_{[l_i]} = \mathcal{O}_{[l_i]}(\mb{U}_{[l_i]}^\top\mathbf{g}^{(k)})$
	\STATE Compute $\mathbf{u}^{(k)}_{[l_i]} \in {\arg\!\max}_{\mathbf{v} \in U^{(k)}_{[l_i]}}\langle\mb{U}_{[l_i]}^\top 		\mathbf{g}^{(k)}, \mathbf{v} \rangle $.
\IF{$\langle \mb{U}_{[l_i]}^\top\mathbf{g}^{(k)}, \mathbf{p}^{(k)}_{[l_i]}  + \mathbf{u}^{(k)}_{[l_i]}- 2\mathbf{x}^{(k)}_{[l_i]}\rangle \leq 0$}
\STATE Set $\mathbf{d}^{(k)}_{[l_i]} = \mathbf{p}^{(k)}_{[l_i]} - \mathbf{x}^{(k)}_{[l_i]}$ and $\bar{\gamma}^{(k)}_{[l_i]} = 1$. 
\ELSE
\STATE Set $\mathbf{d}^{(k)}_{[l_i]} = \mathbf{x}^{(k)}_{[l_i]} - \mathbf{u}^{(k)}_{[l_i]}$ and $\bar{\gamma}^{(k)}_{[l_i]} = \frac{\mu^{(k)}_{\mathbf{u}^{(k)}_{[l_i]}}}{1 - \mu^{(k)}_{\mathbf{u}^{(k)}_{[l_i]}}}$.
\ENDIF
\STATE Set $\gamma^{(k)}_{[l_i]} = \min\{-\frac{\langle \mb{U}_{[l_i]}^\top \mb{g}^{(k)}, \mb{d}^{(k)}_{[l_i]} \rangle}{L\norm{\mb{d}^{(k)}_{[l_i]}}^2}, \bar{\gamma}^{(k)}_{[l_i]} \}$.
	\STATE Update $U^{(k + 1)}_{[i]}$ and $\mathbf{\mu}^{(k+1)}_{[i]}$ by Procedure VRU.
\ENDFOR
\STATE Update $\mathbf{x}^{(k+1)} = \mathbf{x}^{(k)} + \sum_{i= 1}^r \gamma^{(k)}_{[l_i]}\mb{U}_{[l_i]}\mathbf{d}^{(k)}_{[l_i]}$.
\ENDFOR
\STATE {\bfseries Return:} $\mathbf{x}^{(k+1)}$
\end{algorithmic}
\end{algorithm}

\begin{Theorem}
Let $\{\mathbf{x}^{(k)}\}_{k \geq 1}$ be the sequence generated by Algorithm \ref{block_cond_grad_1} for solving Problem $(\ref{general_problem})$ with block-coordinate structure constraints ,$N$ be the number of vertices used to represent $\mb{x}^{(k)}$ (if VRU is implemented by using Carath\'eodory's theorem, $ N = p + 1$, otherwise $N = \vert V \vert$) and $F^*$ be the optimal value of the problem. Let $\hat{\rho} = r\Omega_{\mathcal{P}}^2 /\{8N^2 \kappa q^2D \max (G, LD)\}$ and set $m_k = \lceil n / (1 + n(1 - \hat{\rho})^{2\mu k})\rceil$ for some $0 < \mu < 1$. Let $D^{(k)}_{[i]}$ be the event that the algorithm removes a vertex from the current convex combination in block $i$ at iteration $k$, that is, the algorithm performs a `drop step' at iteration $k$. Assume $\max_{i}\{\mr{P}(D^{(k)}_{[i]})\} \leq (1 - \hat{\rho})^{\lambda k}$ for some $ 0 < \lambda < 1$ and all $k \geq 1$. Then for any $k \geq 1$
\begin{align*}
\mathbb{E}\{F(\mathbf{x}^{(k+1)}) - F^*\}
&\leq C_4(1 - \hat\rho)^{\min(\mu, \lambda)}
\end{align*}
where
\begin{align*}
C_4 = \{F(\mb{x}^{(1)}) - F^*  + \frac{rG\sqrt{C_2}}{qL\{1 - (1 - \hat{\rho})^{1 -\mu}\}} + \frac{rG^2}{2L\{1 - (1 - \hat{\rho})^{1 - \lambda}\}}\}.
\end{align*}
\end{Theorem}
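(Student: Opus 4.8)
The argument will run in close parallel to that of Theorem 1, with extra bookkeeping for the random block subset $\mathcal{L}$ and for the block-separable form of the update. First I would start from the descent inequality implied by $L$-Lipschitz continuity of $\nabla F$,
\begin{align*}
F(\mathbf{x}^{(k+1)}) \leq F(\mathbf{x}^{(k)}) + \langle \nabla F(\mathbf{x}^{(k)}), \mathbf{x}^{(k+1)} - \mathbf{x}^{(k)}\rangle + \tfrac{L}{2}\norm{\mathbf{x}^{(k+1)} - \mathbf{x}^{(k)}}^2 .
\end{align*}
Since $\mathbf{x}^{(k+1)} - \mathbf{x}^{(k)} = \sum_{i=1}^r \gamma^{(k)}_{[l_i]}\mathbf{U}_{[l_i]}\mathbf{d}^{(k)}_{[l_i]}$ is supported on the \emph{distinct} blocks $l_1, \ldots, l_r$, the matrices $\mathbf{U}_{[l_1]}, \ldots, \mathbf{U}_{[l_r]}$ have mutually orthogonal ranges, so the quadratic term splits as $\sum_{i=1}^r (\gamma^{(k)}_{[l_i]})^2\norm{\mathbf{d}^{(k)}_{[l_i]}}^2$ and the linear term splits blockwise. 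Writing $\nabla F(\mathbf{x}^{(k)}) = \mathbf{g}^{(k)} + \mathbf{e}^{(k)}$, sampling $\mathcal{J}$ without replacement gives $\mathbb{E}[\mathbf{e}^{(k)} \mid \mathbf{x}^{(k)}] = \mathbf{0}$ and $\mathbb{E}[\norm{\mathbf{e}^{(k)}}^2 \mid \mathbf{x}^{(k)}] \leq \tfrac{n - m_k}{(n-1)m_k}\,C_2 \leq C_2(1 - \hat\rho)^{2\mu k}$ under the prescribed schedule for $m_k$, while $|\langle \mathbf{e}^{(k)}, \mathbf{x}^{(k+1)} - \mathbf{x}^{(k)}\rangle| \leq \norm{\mathbf{e}^{(k)}}D$ because both iterates lie in $\mathcal{P}$.

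Next I would bound the blockwise progress. On a block $l_i$ on which a genuine Frank--Wolfe or away step is taken (step size not capped), the adaptive choice $\gamma^{(k)}_{[l_i]} = -\langle \mathbf{U}_{[l_i]}^\top\mathbf{g}^{(k)}, \mathbf{d}^{(k)}_{[l_i]}\rangle / (L\norm{\mathbf{d}^{(k)}_{[l_i]}}^2)$ minimizes the quadratic upper model along $\mathbf{d}^{(k)}_{[l_i]}$, giving a surrogate decrease of at least $\langle \mathbf{U}_{[l_i]}^\top\mathbf{g}^{(k)}, \mathbf{d}^{(k)}_{[l_i]}\rangle^2 / (2LD^2)$, which by the branch selection is a constant multiple of the squared blockwise Frank--Wolfe--away gap $g^{(k)}_{[l_i]} := \max_{\mathbf{v}\in U^{(k)}_{[l_i]}}\langle \mathbf{U}_{[l_i]}^\top\mathbf{g}^{(k)}, \mathbf{v}\rangle - \min_{\mathbf{v}\in \mathcal{P}_{[l_i]}}\langle \mathbf{U}_{[l_i]}^\top\mathbf{g}^{(k)},\mathbf{v}\rangle$. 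I would then port the geometric error bound used for Theorem 1 to the product polytope: using the vertex--face distance $\Omega_{\mathcal{P}}$, the Hoffman constant inside $\kappa$, and the representation bound $N$, and exploiting that the vertices of $\mathcal{P}$ factor through those of the $\mathcal{P}_{[j]}$, one should obtain $\sum_{j=1}^q g^{(k)}_{[j]} \gtrsim \Omega_{\mathcal{P}}\sqrt{F(\mathbf{x}^{(k)}) - F^*}\,/(N\sqrt{\kappa})$ after again absorbing the $\mathbf{g}^{(k)}$-versus-$\nabla F(\mathbf{x}^{(k)})$ discrepancy into $\norm{\mathbf{e}^{(k)}}$. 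Averaging over the uniform choice of $\mathcal{L}$ and applying Cauchy--Schwarz, $\mathbb{E}\big[\sum_{i=1}^r (g^{(k)}_{[l_i]})^2\big] \geq \tfrac{r}{q^2}\big(\sum_{j=1}^q g^{(k)}_{[j]}\big)^2$, which is the source of the factor $r/q^2$ in $\hat\rho$.

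Then I would handle the drop steps and assemble the recursion. On a block where the step is capped by $\bar\gamma^{(k)}_{[l_i]}$ I would use only that the blockwise surrogate does not increase and that its first-order change is $O(GD)$ in magnitude; the hypothesis $\max_i\mathbb{P}(D^{(k)}_{[i]}) \leq (1-\hat\rho)^{\lambda k}$ then caps the expected damage from drop steps by a term of order $\tfrac{rG^2}{qL}(1-\hat\rho)^{\lambda k}$. Combining the descent inequality with the blockwise progress bound, the estimate $\mathbb{E}\norm{\mathbf{e}^{(k)}} \leq \sqrt{C_2}(1-\hat\rho)^{\mu k}$, and the drop-step bound, and collecting constants into $G$, $L$, $C_2$, should yield a one-step inequality of the form
\begin{align*}
\mathbb{E}[F(\mathbf{x}^{(k+1)}) - F^* \mid \mathbf{x}^{(k)}] \leq (1-\hat\rho)\big(F(\mathbf{x}^{(k)}) - F^*\big) + \tfrac{rG\sqrt{C_2}}{qL}(1-\hat\rho)^{\mu k} + \tfrac{rG^2}{2qL}(1-\hat\rho)^{\lambda k}.
\end{align*}
Taking total expectations, unrolling, and bounding each geometric sum by $\sum_{j=1}^{k}(1-\hat\rho)^{k-j}(1-\hat\rho)^{\mu j} \leq (1-\hat\rho)^{\mu k}/\{1 - (1-\hat\rho)^{1-\mu}\}$ (and likewise with $\lambda$) then gives the stated bound with constant $C_4$ and rate $(1-\hat\rho)^{\min(\mu,\lambda)k}$.

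The main obstacle will be the second step: transporting the Hoffman/vertex--face error bound from the single polytope of Theorem 1 to the Cartesian product $\mathcal{P}_{[1]}\times\cdots\times\mathcal{P}_{[q]}$, showing that the global Frank--Wolfe--away gap of $F$ at $\mathbf{x}^{(k)}$ is controlled by the sum of the $q$ blockwise gaps with the correct dependence on $N$, $\kappa$, $\Omega_{\mathcal{P}}$ and $q$, and simultaneously threading the minibatch error $\mathbf{e}^{(k)}$ through that error bound rather than merely through the descent inequality. Everything else mirrors the proof of Theorem 1 up to the extra $r/q$ averaging factor.
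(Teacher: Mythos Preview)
Your plan is essentially the paper's own route: descent lemma with the block-orthogonal quadratic, per-block four-case analysis $(A/B/C/D)$, the key lower bound on $\mathbb{E}\langle \mathbf{g}^{(k)}_{[l_i]},\mathbf{d}^{(k)}_{[l_i]}\rangle^2$ that produces the $r/q^2$ factor, and the same geometric unrolling. Two remarks on where you overestimate the difficulty.

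First, what you flag as the ``main obstacle'' is in fact nearly trivial in the paper's execution. Since $\mathcal{P}=\mathcal{P}_{[1]}\times\cdots\times\mathcal{P}_{[q]}$ is itself a polytope with vertex set $V=V_{[1]}\times\cdots\times V_{[q]}$ and $U^{(k)}=U^{(k)}_{[1]}\times\cdots\times U^{(k)}_{[q]}$, separability of a linear form over a product gives immediately
\[
\sum_{i=1}^q \max_{\mathbf{p}_{[i]}\in V_{[i]},\,\mathbf{u}_{[i]}\in U^{(k)}_{[i]}}\langle \mathbf{g}^{(k)}_{[i]},\mathbf{u}_{[i]}-\mathbf{p}_{[i]}\rangle
=\max_{\tilde{\mathbf{p}}\in V,\,\tilde{\mathbf{u}}\in U^{(k)}}\langle \mathbf{g}^{(k)},\tilde{\mathbf{u}}-\tilde{\mathbf{p}}\rangle,
\]
after which Corollary~\ref{linear_approx_lower_bound} and Lemma~\ref{strong_cvx_lemma} are applied \emph{globally} on $\mathcal{P}$, exactly as in Theorem~1. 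No transport of the Hoffman/vertex--face bound to the product is needed beyond this identity.

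Second, you do not need to ``thread $\mathbf{e}^{(k)}$ through the error bound.'' The paper avoids that entirely by using unbiasedness plus Jensen inside the conditional expectation:
\[
\mathbb{E}\big[\max_{\mathbf{p},\mathbf{u}}\langle \mathbf{g}^{(k)},\mathbf{u}-\mathbf{p}\rangle\,\big|\,\mathcal{F}^{(k)}\big]
\ \geq\ \max_{\mathbf{p},\mathbf{u}}\langle \nabla F(\mathbf{x}^{(k)}),\mathbf{u}-\mathbf{p}\rangle,
\]
so the stochastic gap is bounded below by the deterministic one before the geometric lemmas are invoked. The noise $\mathbf{e}^{(k)}$ is controlled only in the descent term, via $\gamma^{(k)}_{[l_i]}\leq -\langle \mathbf{g}^{(k)}_{[l_i]},\mathbf{d}^{(k)}_{[l_i]}\rangle/(L\norm{\mathbf{d}^{(k)}_{[l_i]}}^2)$ and Cauchy--Schwarz, yielding the $\tfrac{G}{L}\norm{\mathbf{U}_{[l_i]}^\top\mathbf{e}^{(k)}}$ contribution rather than a crude $D\norm{\mathbf{e}^{(k)}}$; this is why the noise constant in $C_4$ is $\tfrac{rG\sqrt{C_2}}{qL}$. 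One small slip: in your one-step inequality the drop-step additive term should be $\tfrac{rG^2}{2L}(1-\hat\rho)^{\lambda k}$, not $\tfrac{rG^2}{2qL}$, because $\tfrac{r}{q}\sum_{i=1}^q\mathbb{P}(D^{(k)}_{[i]})\leq r(1-\hat\rho)^{\lambda k}$; this matches $C_4$ as stated.
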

\noindent Proof of the Theorem 2 can be found in the appendix.

\noindent\textbf{Remark:} All of the sub-problems in the inner loop of the proposed algorithm can be solved in parallel. This feature can further boost the performance of the algorithm. It seems that the rate of convergence of the algorithm for block-coordinate problems is worse since $\hat{\rho} < \rho$. One reason for this is the algorithm only uses sampled blocks instead of a full data pass in every iteration. Another reason is that the theoretical result is based on the worst case scenario happening at the same time in every sampled block (which won't happen in real implementations) for the ease of proving linear convergence. The worst case scenario is the so called `drop-step' and the detailed analysis can be found in the supplementary material.
\section{Numerical Studies}
In this section, we apply the semi-stochastic Frank-Wolfe algorithm with away-steps to two popular problems in machine learning. The first one is a graph guided fused LASSO problem for multi-task learning using a simulated data set. The second one is a structural support vector machine (SVM) problem using a real data set in speech recognition.
\subsection{Multi-task Learning via Graph Guided Fused LASSO}
\begin{figure*}
\centering
\begin{tabular}{rrl}
\includegraphics[scale = 0.28]{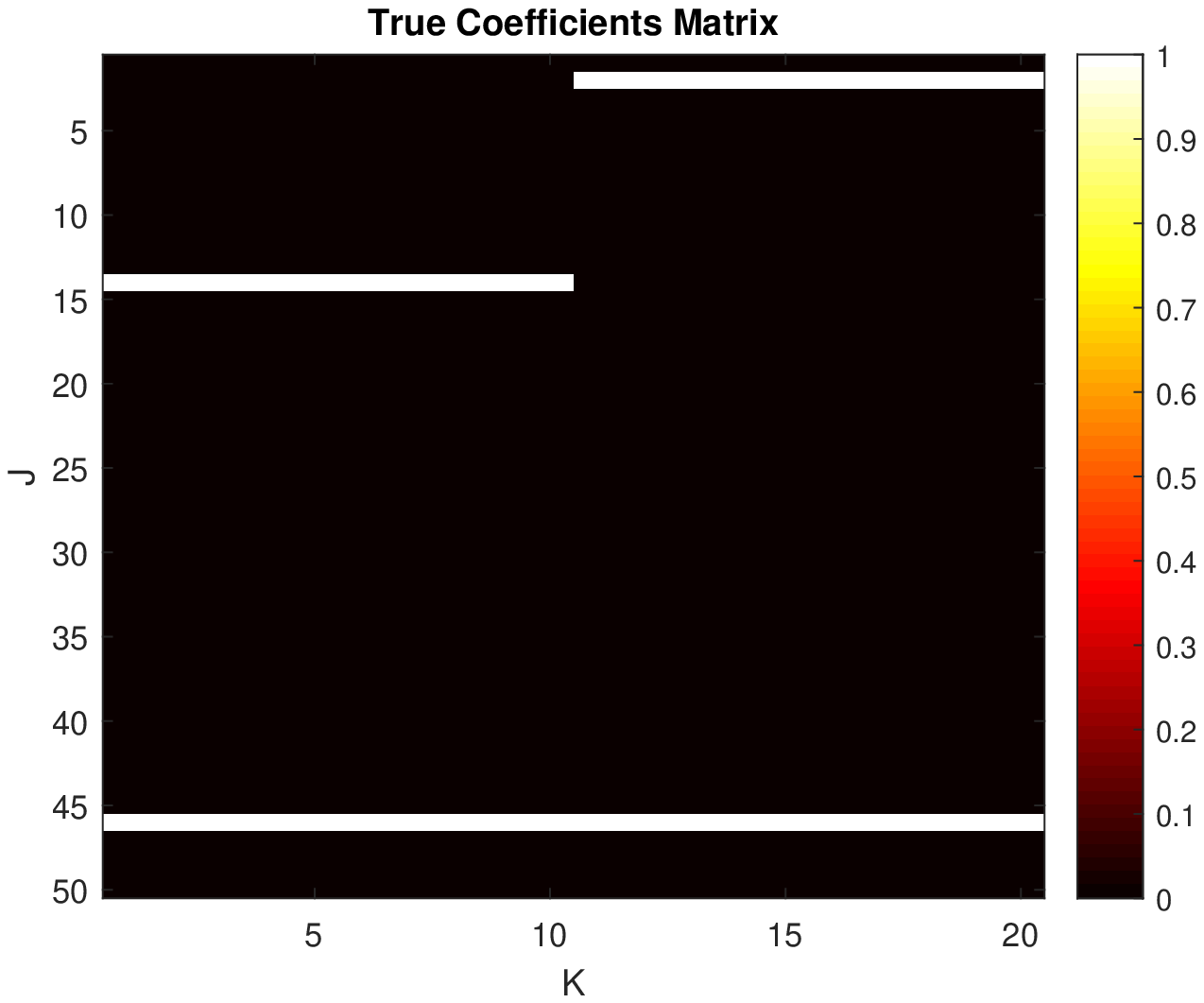}&
\includegraphics[scale = 0.28]{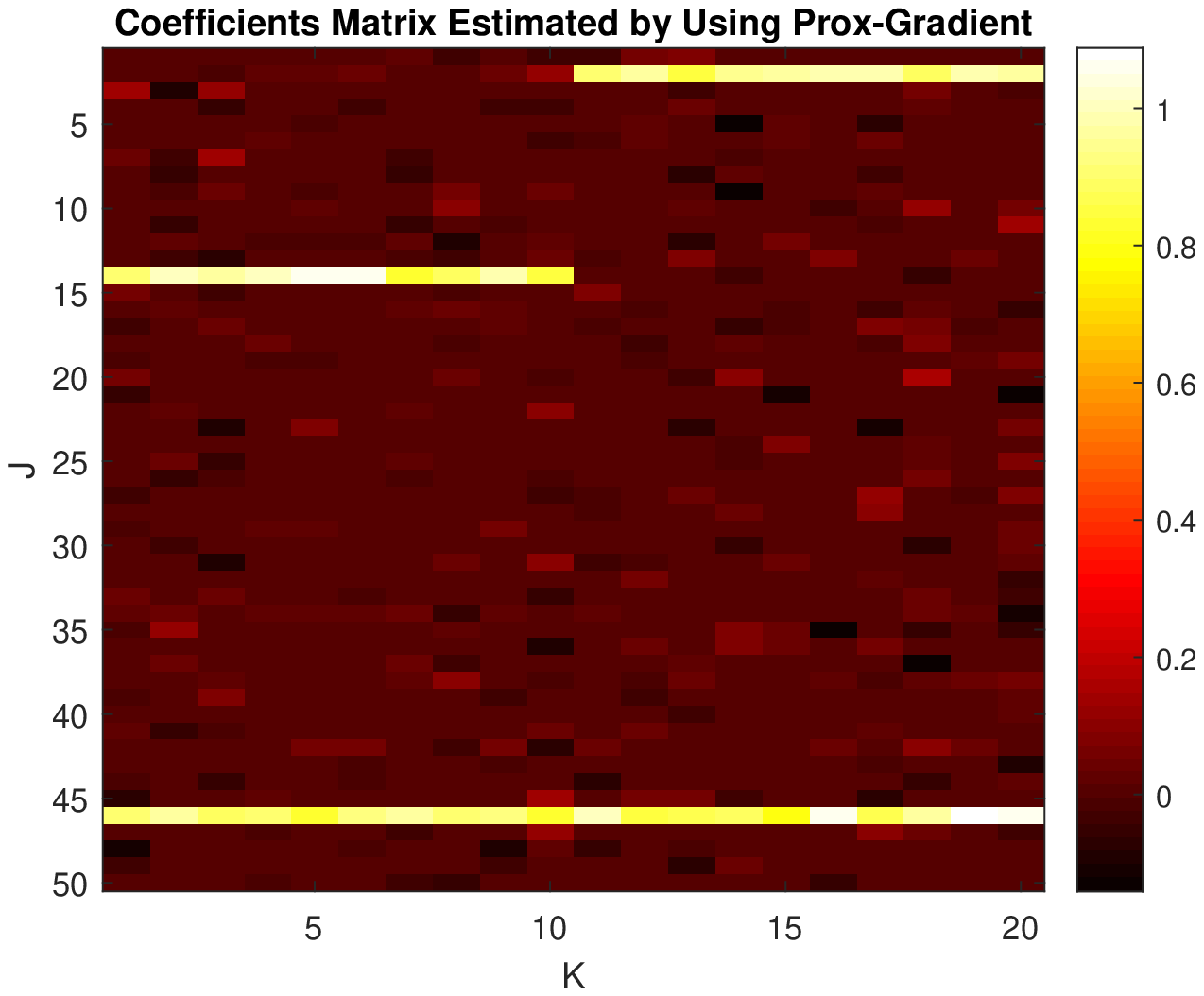} &
\multirow{2}*[2.7cm]{\includegraphics[scale=0.5]{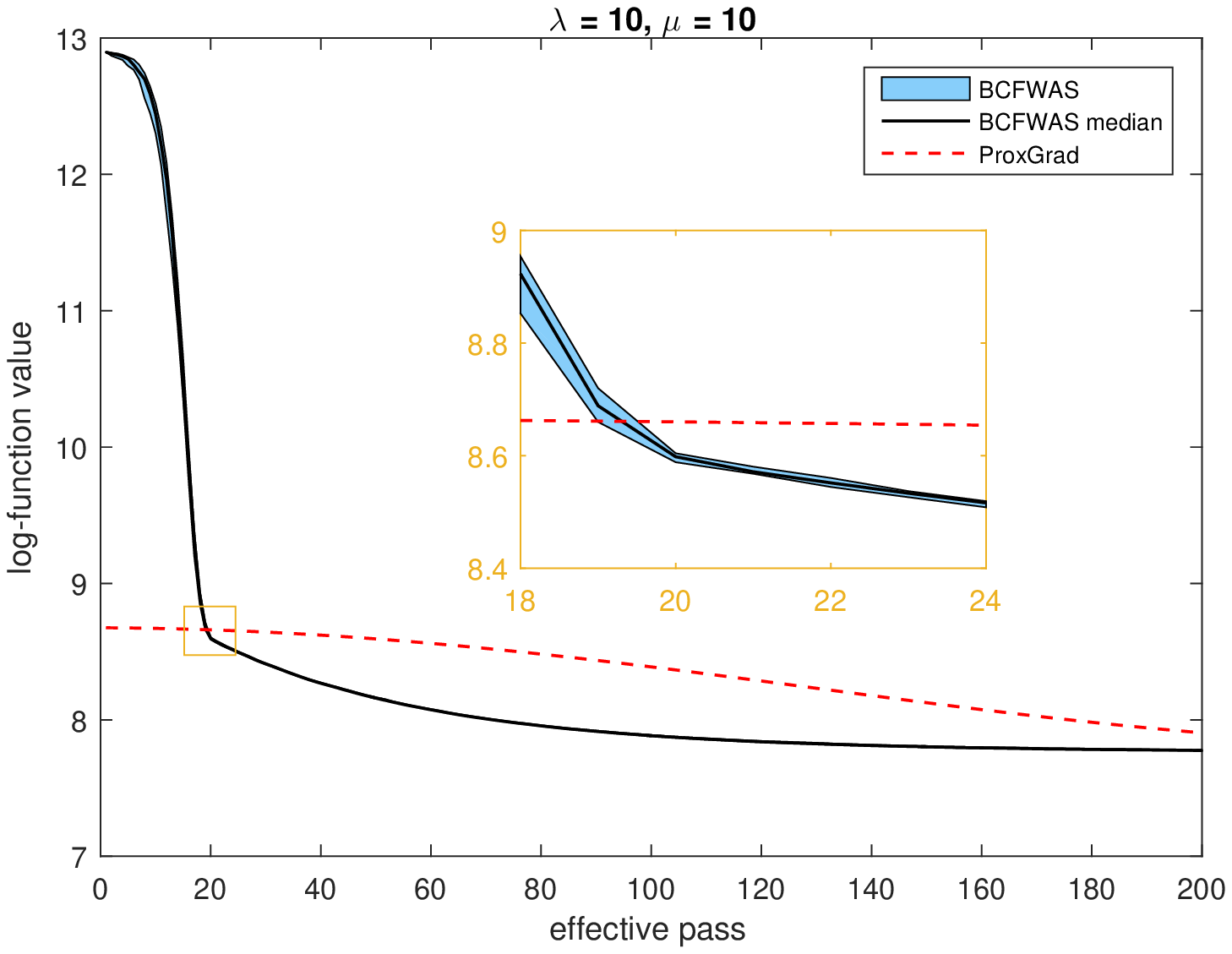}}\\
\includegraphics[scale = 0.28]{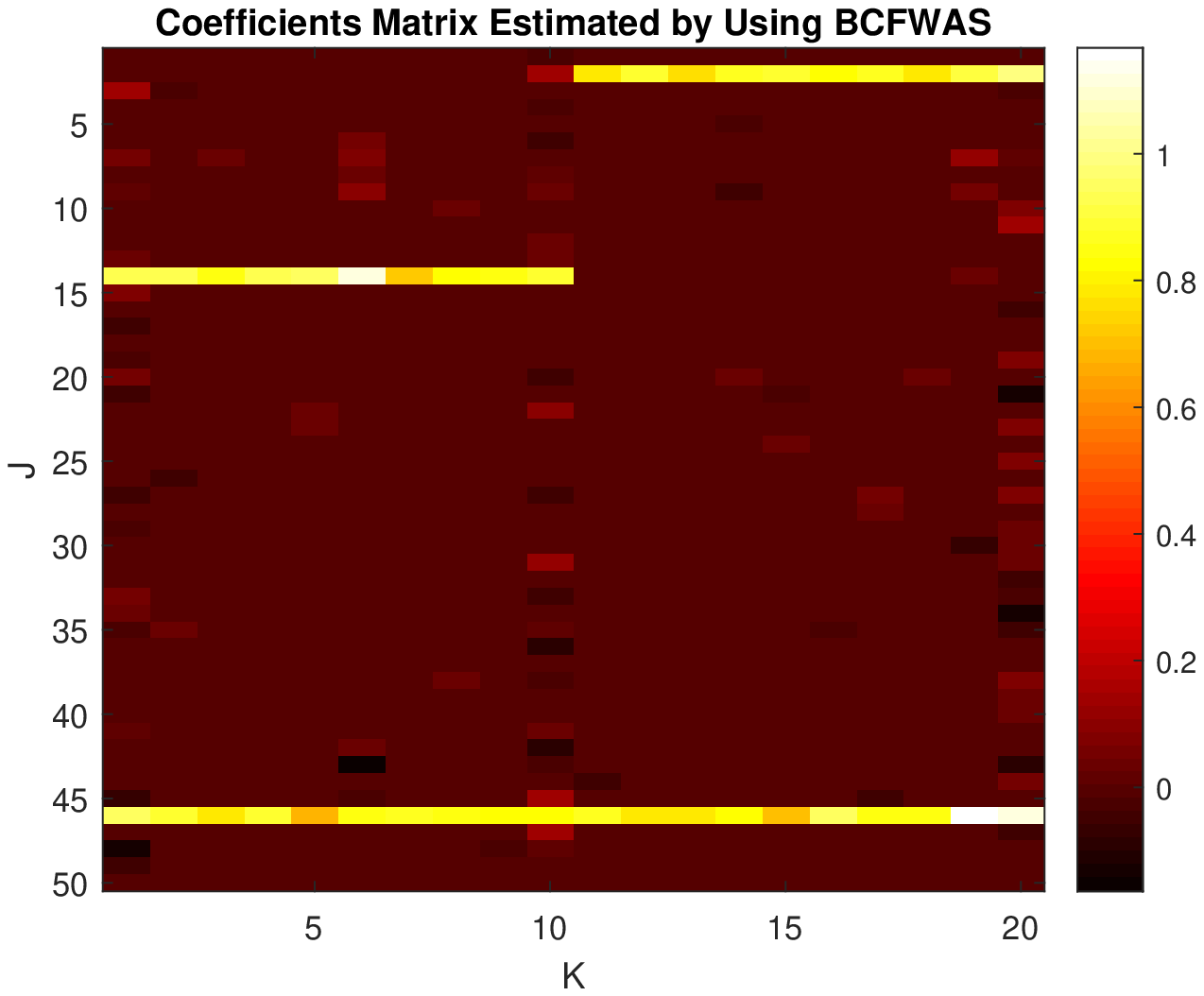} &
\includegraphics[scale = 0.28]{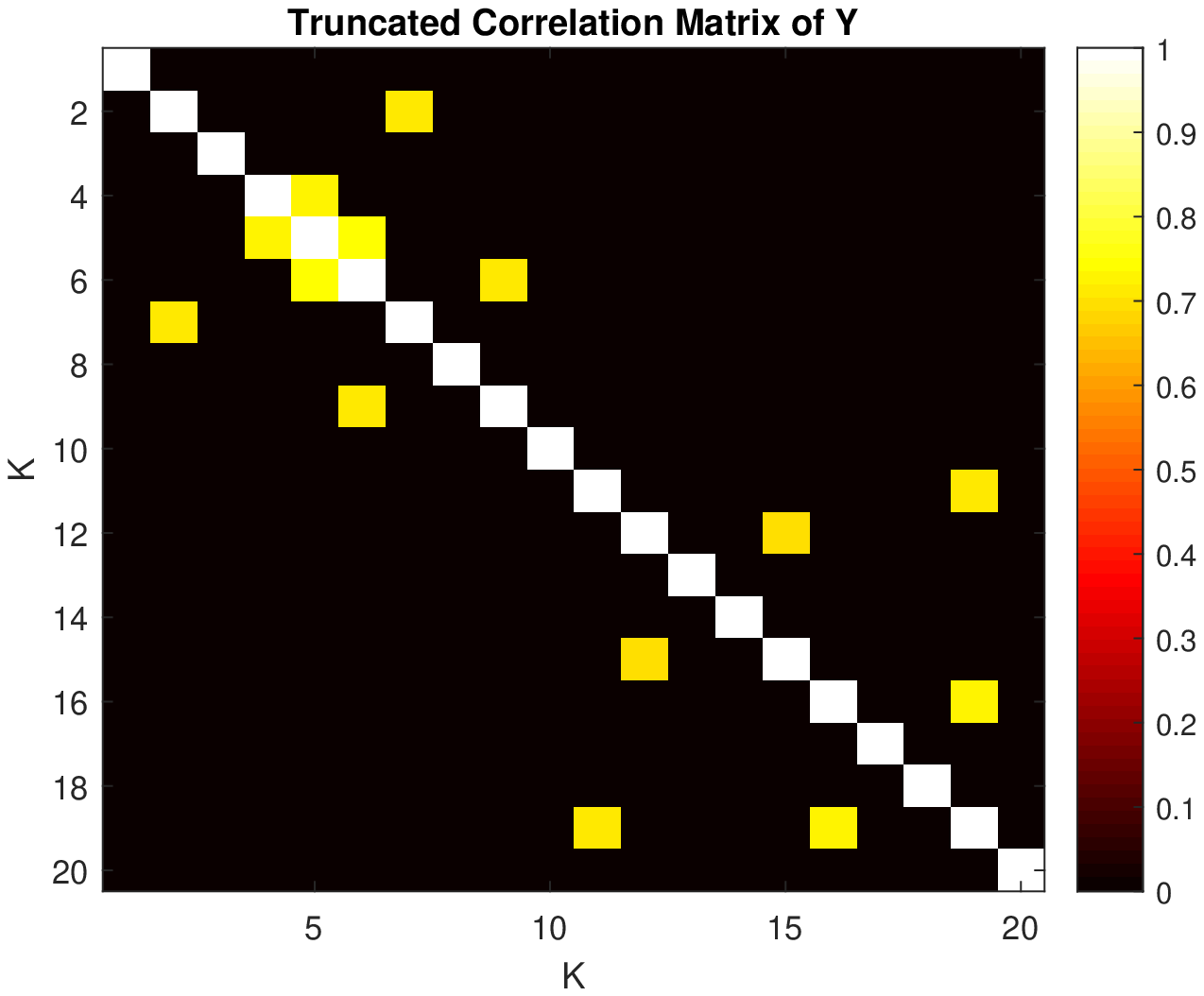}&
\end{tabular}
\caption{(a) is the true regression coefficient matrix in the simulated data for GFLASSO problem, (b) is the estimated regression coefficient matrix by using proximal gradient method (c) is the estimated regression coefficient matrix by using block-coordinate Frank-Wolfe algorithm with away-steps (BCFWAS), (d) is the truncated correlation matrix of the outputs $\mb{Y}$ in the simulated data set base on which the graph in the GFLASSO problem is constructed and (e) is the plot of the logarithmic objective function values of both methods. Median of the 10 sample-paths when running both algorithms are plotted in the lines. The shaded areas shows the upper and lower bounds at each iteration in the 10 replications. The detailed plot in the middle shows that the BCFWAS beats the Prox-Grad after the 20th iteration in this experiment.}
\label{gflasso}
\end{figure*}

Consider the following multivariate linear model:
\begin{align*}
\mathbf{y}_i = \mathbf{X}\boldsymbol\beta_i + \boldsymbol\epsilon_i \quad\quad i = 1,\ldots, N
\end{align*}
where $\mathbf{X} \in \mathbb{R}^{N \times J}$ denotes the matrix of input data for $J$ covariates over $N$ samples, $\mathbf{Y}=[\mathbf{y}_1, \ldots, \mathbf{y}_K] \in \mathbb{R}^{N \times K}$ denotes the matrix of output data for $K$ tasks, $\mathbf{B} = [\boldsymbol\beta_1, \ldots, \boldsymbol\beta_K] \in \mathbb{R}^{J \times K}$ denotes the matrix of regression coefficients for the $K$ tasks and the $\boldsymbol\epsilon_i$'s denote the noise terms that are independent and identically distributed. Let $G = (V, E)$ be a graph where $V$ is the set of vertices and $E$ is the set of edges. In \citep{CLKCX11}, the graph is constructed by taking each task as a vertex and each non-zero off-diagonal entry of the correlation matrix of $\mb{Y}$, as an edge. When the correlation between two tasks is small, it will be truncated to $0$ when calculating the correlation matrix and hence there won't be edges between such tasks. The graph guided fused LASSO (GFLASSO) problem for multi-task sparse regression problem is formulated as 
\begin{align*}
\min_{\mathbf{B}\in \mathbb{R}^{J \times K}} \frac{1}{2}\norm{\mathbf{Y} - \mathbf{XB}}_F^2 + \Omega(\mathbf{B}) + \lambda\norm{\mathbf{B}}_1,
\end{align*}
where $$\Omega(\mathbf{B}) = \gamma \sum_{e = (m, l) \in E} \vert r_{ml}\vert\sum_{j=1}^J \vert \beta_{jm} - \text{sign}(r_{ml})\beta_{jl}\vert, $$ $r_{ml}$ is the $(m, l)$-th entry of \textbf{cor}($\mathbf{Y}$), the truncated correlation matrix of ${\mathbf{Y}}$ at a pre-determined level, $\norm{\cdot}_F$ denotes the Frobenius norm, and  $\gamma$ and $\lambda$ are the regularization parameters.
The GFLASSO was first proposed by \cite{KSX09} for problems in quantitative trait network in genomic association studies. To solve GFLASSO, \cite{KSX09} formulated it as a quadratic programming problem and proposed using an active-set method. Later \cite{CLKCX11} proposed a smoothing proximal gradient method which is more saclable than the active-set method. We propose to use the block-coordinate semi-stochastic Frank-Wolfe algorithm with away-steps to solve this problem. When the underlying graph is a union of several connected components (as it is the case in most genetic studies) instead of being fully connected, the proposed algorithm can be applied by considering each connected component as a block. Such a structure effectively transforms a large original problem into several small subproblems which are easier to solve. When the regression coefficient matrix $\mb{B}$ is sparse, which is also common in most genetic studies, the sparse update in each step of the Frank-Wolfe type algorithms will also have the advantage of being able to extract information from the data sets efficiently. In our numerical example, we follow the data generation procedure in \cite{CLKCX11} which simulates genetic association mapping data. We set $N = 200$, $J = 50$, $K = 20$, entries of $\mb{X}$ and $\boldsymbol\epsilon_k$'s are generated as standard normal random variables, $\boldsymbol\beta_k$'s are generated such that the correlation matrix of outputs of $\mb{y}_k$'s has a block structure and all non-zero elements of the $\boldsymbol\beta_k$'s are equal to 1. The entries in the correlation matrix of $\mb{Y}$ are truncated to $0$ when their absolute value is smaller than $0.7$. In the illustration, fitting GFLASSO using the proposed algorithm reveal the structures of the coefficients very quickly and the proposed algorithm converges much faster than the Prox-Grad method that was proposed in \cite{CLKCX11}.

\noindent\textbf{Remark:} Initial values for Algorithm \ref{block_cond_grad_1} must be vertices of each `block' polytope and even with a relatively bad initialization, the proposed algorithm has a better performance than the Prox-Grad algorithm in this problem. To obtain the same initialization as in the Prox-Grad algorithm, we can write the initial value as a convex combination of the vertices in each block and set $U_{[i]}^{(1)}$ and $\boldsymbol\mu_{[i]}^{(1)}$ accordingly. It is worth noting that due to the numerical error, the points returned by the oracles may not be the vertices of the constraint polytopes. As a result, we may obtain a smaller function value than the optimal value due to the tiny in-feasibilities. Such problems do not occur when the vertices are integral, which is the case of our next example.

\subsection{Structural Support Vector Machines (SVM)} 
The idea of using the Frank-Wolfe algorithm to solve the dual of structural SVM problems was introduced by \cite{LJJSP13}. Briefly speaking, structural SVM solves multi-label classification problems through a linear classifier (with parameter $\mb{w}$) $h_\mb{w}(\mb{x}) = \arg\!\max_{\mb{y} \in \mc{Y}(\mb{x})}\langle \mb{w}, \phi(\mb{x}, \mb{y}) \rangle$, where $\mb{y}$ is a structured output given an input $\mb{x}$, $\mc{Y}(\mb{x})$ denotes the set of all possible labels for an input $\mb{x}$, $\phi: \mc{X} \times \mc{Y} \rightarrow \mr{R}^d$ is a given feature or basis map, and $\mc{X}$ denotes the set of all possible inputs. To learn $\mb{w}$ from training samples $\{(\mb{x}_i, \mb{y}_i), i = 1, \ldots, n\} $, we solve 
\begin{align*}
\min_{\mb{w}, \boldsymbol\xi} \quad &\frac{\lambda}{2}\norm{\mb{w}} + \frac{1}{n}\sum_{i=1}^n \xi_i \\
\text{s.t.}                    \quad          & \langle \mb{w}, \psi_i(\mb{y}) \rangle \geq L_i(y) -\xi_i \\
                                             & \mb{y} \in \mc{Y}_i \\
                                             & i = 1, 2, \ldots, n,
\end{align*}
where $\psi_i(\mb{y}) = \phi(\mb{x}_i, \mb{y}_i) -\phi(\mb{x}_i, \mb{y})$ is the potential function, $L_i(\mb{y})$ denotes loss incurred by predicting $\mb{y}$ instead of the observed label $\mb{y}_i$, $\mc{Y}_i = \mc{Y}(\mb{x}_i)$,  $\xi$'s are the surrogate losses for the data points and $\lambda$ is a regularization parameter. Now consider the Lagrange dual of the above problem. Let $\alpha_i(\mb{y}) \in \mr{R}$ be the dual variable associated with training sample $i$ and possible output $\mb{y} \in \mc{Y}_i$, $\boldsymbol\alpha_i \in \mr{R}^{m_i}$ be the concatenation of all $\alpha_i(\mb{y})$ over all $ \mb{y} \in\mc{Y}_i$,  and $\boldsymbol\alpha \in R^m$ be the column concatenation of all $\alpha_i(\mb{y})$, where $m_i = \vert \mc{Y}_i \vert$ and $m = \sum_{i=1}^n m_i$. Then the dual problem can be written as
\begin{align*}
\min_{\boldsymbol\alpha} \quad &F(\boldsymbol\alpha) := \frac{\lambda}{2}\norm{\mb{A}\boldsymbol\alpha}^2 -\mb{b}^\top \boldsymbol\alpha \\
\text{s. t.}   \quad                & \boldsymbol\alpha \geq \mb{0} \\
									  & \sum_{\mb{y} \in \mc{Y}_i}\alpha_i(\mb{y}) = 1\\
									  & i = 1, \ldots, n,
\end{align*}
where $A \in \mr{R}^{d \times m}$ whose columns are given by $\psi_i(\mb{y}) /(\lambda n), \mb{y} \in \mc{Y}_i, i = 1, 2, \ldots, n$ and $b \in \mr{R}^m$ whose entries are $L_i(\mb{y})/n, \mb{y} \in \mc{Y}_i, i = 1, 2, \ldots, n$. Given a dual solution $\hat{\boldsymbol\alpha}$, the corresponding primal solution can be retrieved from the relation $\hat{\mb{w}} = \mb{A}\hat{\boldsymbol\alpha}$ which is implied by KKT conditions. Observe that $\norm{\mb{A}\boldsymbol\alpha}^2 = \sum_{i=1}^d (\mb{A}_i \boldsymbol\alpha)^2$ and the constraint set for the dual problem is a Cartesian product of polytopes that can be written as $\mc{M} := \Delta_{\vert \mc{Y}_1 \vert} \times \cdots \times \Delta_{\vert \mc{Y}_n \vert}$ where $\Delta_{\vert\mc{Y}_i\vert}$ is the simplex generated by the elements in $\mc{Y}_i$. This suggests the block coordinate stochastic Frank-Wolfe algorithm with away-steps can be applied to this problem if we have linear oracles for each block which solves
\begin{align*}
\min_{\boldsymbol\alpha_i \in \Delta_{\vert \mc{Y}_i \vert}} \langle \nabla_{[i]} F(\cdot), \boldsymbol\alpha_i \rangle
\end{align*}
where $\nabla_{[i]} F(\cdot)$ denotes the partial derivative of $F(\cdot)$ with respect to $i$-th block of variables.
Since the gradient of $F$, $\nabla F = \lambda \mb{A}^\top \mb{A}\boldsymbol\alpha - \mb{b} = \lambda\mb{A}^\top\mb{w} - \mb{b}$ whose $(i, \mb{y})$-th entry is $\{\langle \mb{w}, \psi_i(\mb{y}) \rangle - L_i(\mb{y})\} / n := -H_i(\mb{y}; \mb{w}) / n$, the above linear oracle is equivalent to the following maximization oracle
\begin{align*}
\max_{\mb{y} \in \mc{Y}_i} H_i(\mb{y}; \mb{w}).
\end{align*}
\begin{algorithm}[tb]
\caption{Block Coordinate Semi-Stochastic Frank-Wolfe Algorithm with Away-Steps for Structural SVM}
\label{block_cond_grad_2}
\begin{algorithmic}[1]
\STATE Let $\mb{y}^{(1)} \in \mc{Y}_1 \times \cdots \times \mc{Y}_n$ where $\mu^{(1)}_{\mb{y}^{(1)}_{[i]}} = 1$, $\mu_\mathbf{v}^{(1)} = 0$ for any $\mathbf{v} \in \mathcal{Y}_i / \{\mb{y}^{(1)}_{[i]}\}$ and $U^{(1)}_{[i]} = \{\mathbf{y}^{(1)}_{[i]}\}$.
\STATE Set $\mb{w}^{(1)}_i = \frac{1}{\lambda n}\psi_i(\mb{y}^{(1)}_{[i]})$, $\ell^{(1)}_i = \frac{1}{n}L_i(\mb{y}^{(1)}_{[i]})$, $\mb{w}^{(1)} = \sum_{i=1}^n \mb{w}^{(1)}_i$, and $\ell^{(1)}= \sum_{i=1}^n \ell^{(1)}_i$.
\FOR{$k = 1, 2, \ldots$}
\STATE Uniformly sample $\mc{J} =\{j_1, \ldots, j_{m_k}\}$ from $\{1, \ldots, d\}$ without replacement.
\STATE Randomly pick $i$ from $\{1, \ldots, n\}$
\STATE Compute $\mathbf{p}^{(k)}_{[i]} = \arg\!\max_{\mb{y} \in \mc{Y}_i}H_i(y; I_\mc{J}\mb{w}^{(k)})$
\STATE Compute $\mathbf{u}^{(k)}_{[i]} \in {\arg\!\max}_{\mathbf{v} \in U^{(k)}_{[i]}}\langle I_\mc{J}\mb{w}^{(k)}, \psi_i(\mathbf{v}) \rangle - L_i(\mb{v})$. 
	\IF{$\langle I_\mc{J}\mb{w}^{(k)}, \frac{1}{\lambda n}\psi_i(\mb{p}^\k_{[i]}) - \mb{w}^\k_i \rangle + \ell^{(k)}_i - \frac{1}{n}L_i(\mb{p}_{[i]}^\k) \leq \langle I_\mc{J}\mb{w}^{(k)},  \mb{w}^\k_i - \frac{1}{\lambda n}\psi_i(\mb{u}^\k_{[i]}) \rangle + \frac{1}{n}L_i(\mb{u}_{[i]}^{(k)}) - \ell^{(k)}_i  $}
\STATE Set $\mb{d}^\k_{i} = \frac{1}{\lambda n}\psi_i(\mb{p}^\k_{[i]}) - \mb{w}^\k_i$, ${e}^\k_{i} = \frac{1}{n}L_i(\mb{p}_{[i]}^\k) - \ell_i^\k$ and $\bar{\gamma}^\k_{i} = 1$.
	\ELSE
	\STATE Set $\mathbf{d}^{(k)}_{i} = \mathbf{w}^{(k)}_{i} - \frac{1}{\lambda n}\psi_i(\mathbf{u}^{(k)}_{[i]})$, ${e}^\k_{i} = \ell_i^\k - \frac{1}{n}L_i(\mb{u}_{[i]}^\k) $, and $\bar{\gamma}^{(k)}_{i} = \frac{\mu^{(k)}_{\mathbf{u}^{(k)}_{[i]}}}{1 - \mu^{(k)}_{\mathbf{u}^{(k)}_{[i]}}}$.
	\ENDIF
	\STATE Set $\gamma^\k = \max\{0, \min\{\frac{-\lambda\langle \mb{w}^\k, \mb{d}^\k_{i}\rangle + e^\k_{i}}{\lambda\norm{\mb{d}^k_{i}}^2}, \bar{\gamma}^\k_{i}\} \}$
	\STATE Update $\mb{w}^{(k+1)}_i = \mb{w}^\k + \gamma^\k \mb{d}^\k_{i}$
\ENDFOR
\end{algorithmic}
\end{algorithm}
According to \cite{LJJSP13}, this maximization problem is known as the \textit{loss-augmented decoding} subproblem which can be solved efficiently. The last thing which may prohibit us from applying the proposed algorithm is the potentially exponential number $\vert \mc{Y}_i \vert$ of dual variables due to its combinatorial nature, which makes maintaining a dense $\boldsymbol\alpha$ impossible. This is also the reason why other algorithms, for example QP, become intractable for this problem. However, the simplex structure of the constraint sets enables us to keep the only non-zero coordinate of the solution to the linear oracle which corresponds to the solution of the loss-augmented decoding sub-problem in each iteration. Thus, we only need to keep track of the initial value $\boldsymbol\alpha^{(0)}$ and a list of previously seen solutions to the maximum oracle. Another way to keep track of the vertices is to maintain a list of primal variables from the linear transformation $\mb{w} = \mb{A}\boldsymbol\alpha$ when the dimension of $\mb{w}$ is moderate. Denote the vertex set of $\Delta_{\vert \mc{Y}_i \vert}$ as $\mc{V}_i$ and for an set $\mc{J} \subset \{1, 2, \ldots, d\}$, let $I_\mc{J}$ be a $d \times d$ matrix whose $j$-th diagonal entry equals $1$ for all $j \in \mc{J}$ and all other entries are zero. With above notation, we can apply the stochastic block coordinate Frank-Wolfe algorithm with away-steps the the structural SVM problem.
We apply the above algorithm on the OCR dataset ($n = 6251, d = 4028$) from \cite{TGK03} and compare it with the block-coordinate Frank-Wolfe algorithm. To make a fair comparison, the initial vertex $\mb{y}^{(1)}_i$ in $i$-th coordinate block is chosen to be the observed tag $\mb{y}_i$ corresponding to the choice of primal variables $\mb{w}_i^{(1)} = 0$ as in the implementation of \cite{LJJSP13}. It is worth noting that in the experiments, the performance of both algorithms was worse when the initializations were changed. Computational results for OCR dataset with regularization parameters $\lambda = 0.05, 0.01 $ and $0.001$ are presented here. From the figures, we can see that the stochastic Block-Coordinate Frank-Wolfe Algorithm with Away-Steps (BCFWAS) dominates the Block-Coordinate Frank Wolfe algorithm (BCFW) for every $\lambda$ when an accurate solution is required.
\begin{figure*}
\vskip 0.2in
\begin{center}
\subfigure[]{\includegraphics[scale = 0.55]{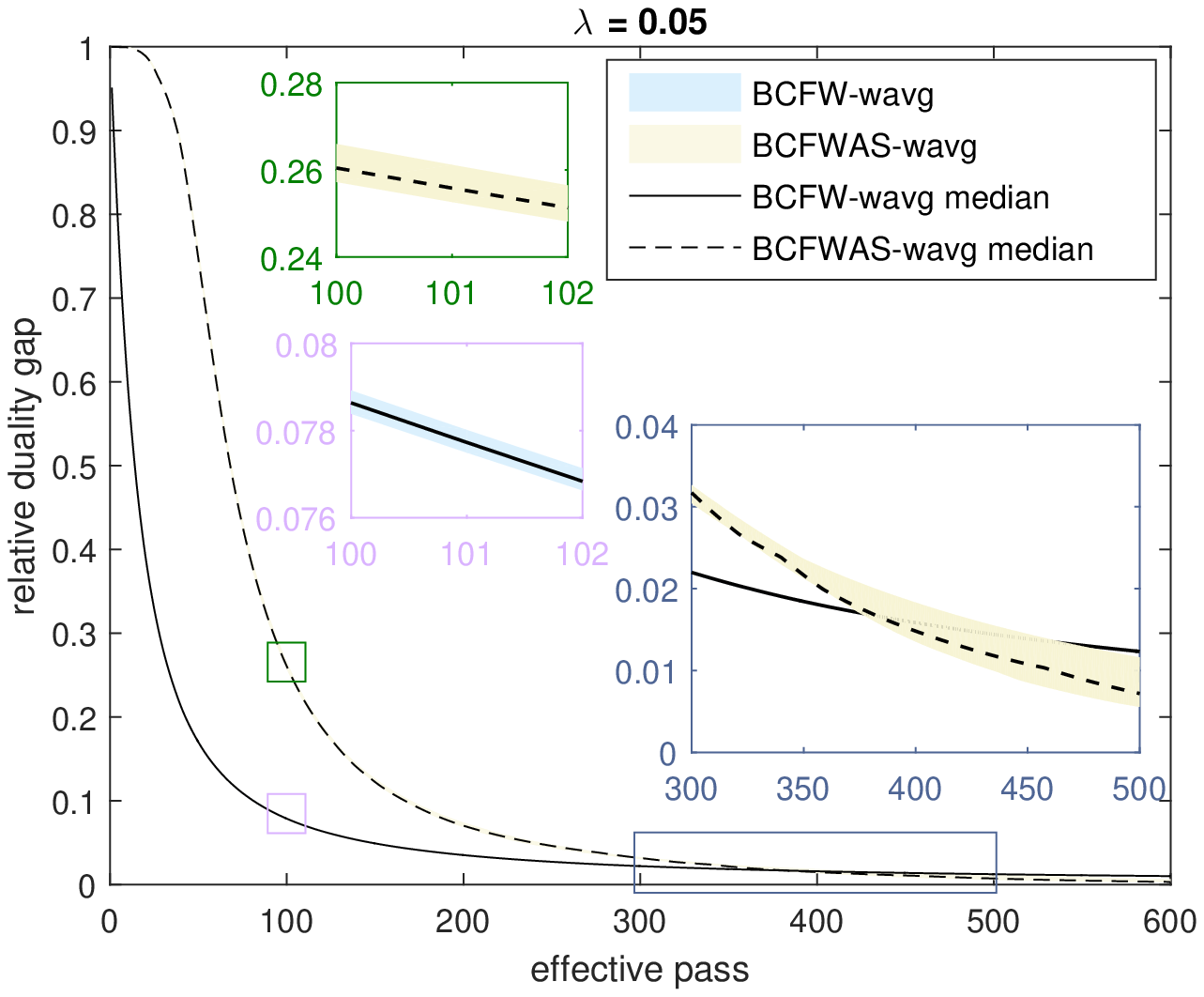}}
\subfigure[]{\includegraphics[scale = 0.55]{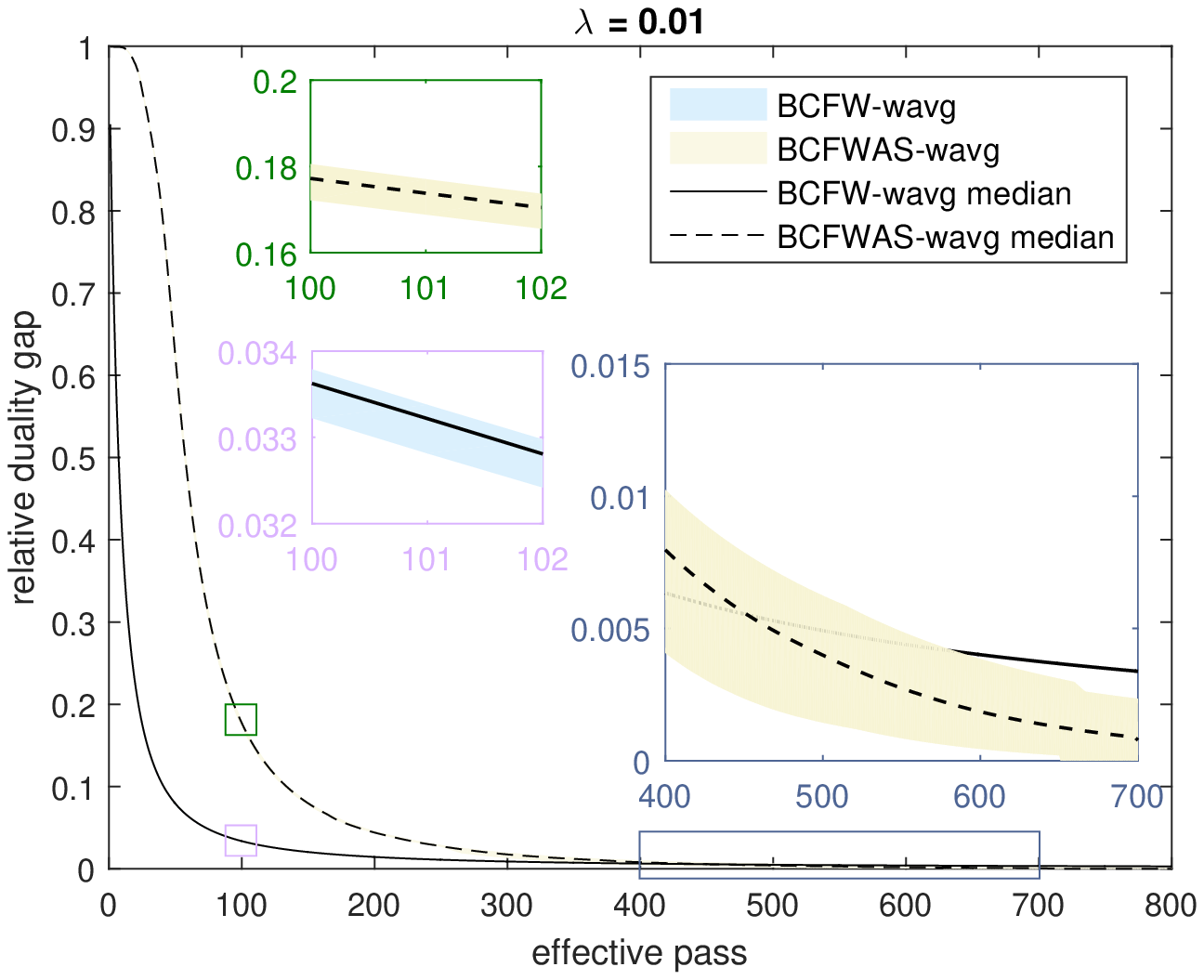}}
\subfigure[]{\includegraphics[scale = 0.55]{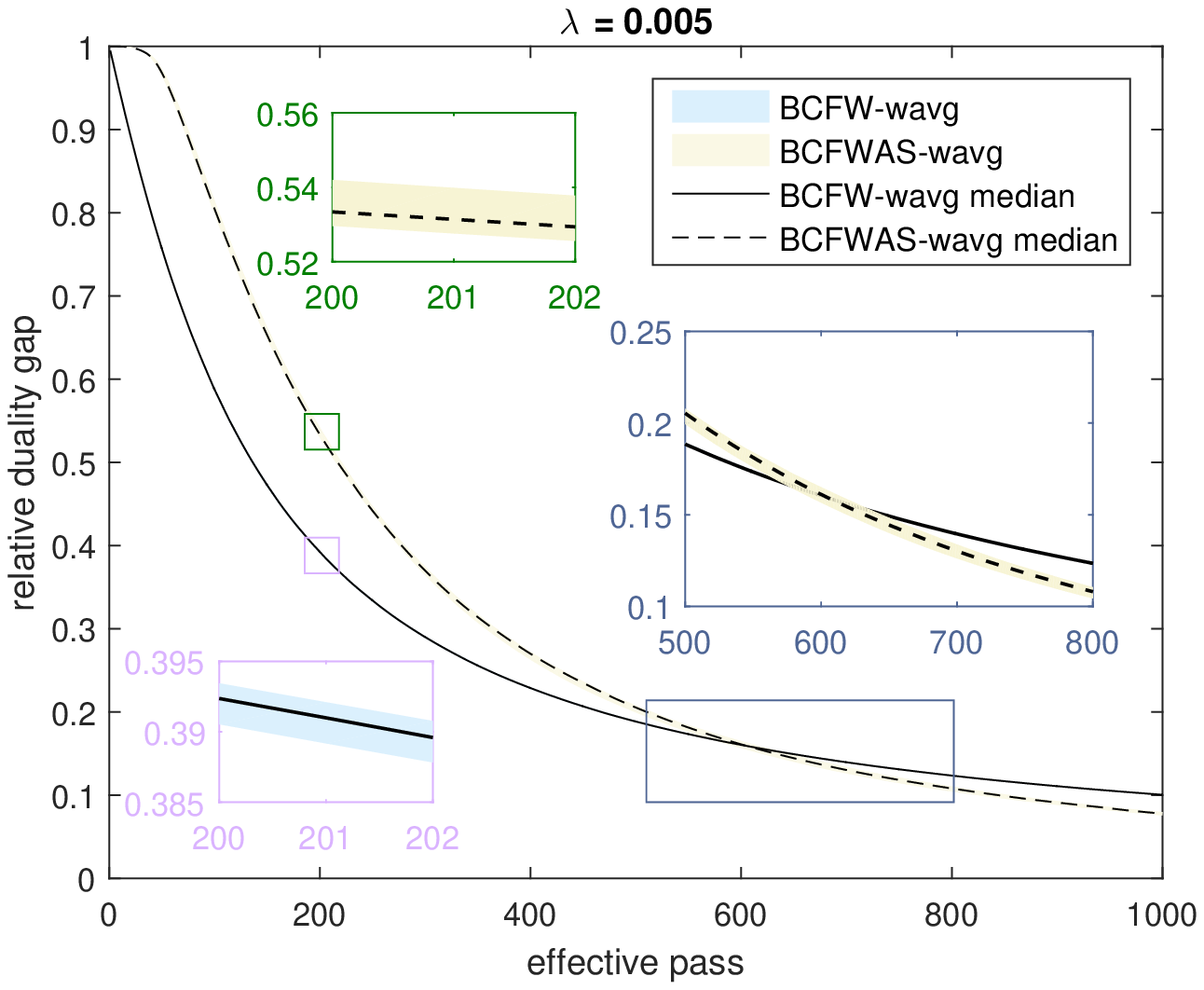}}
\subfigure[]{\includegraphics[scale = 0.55]{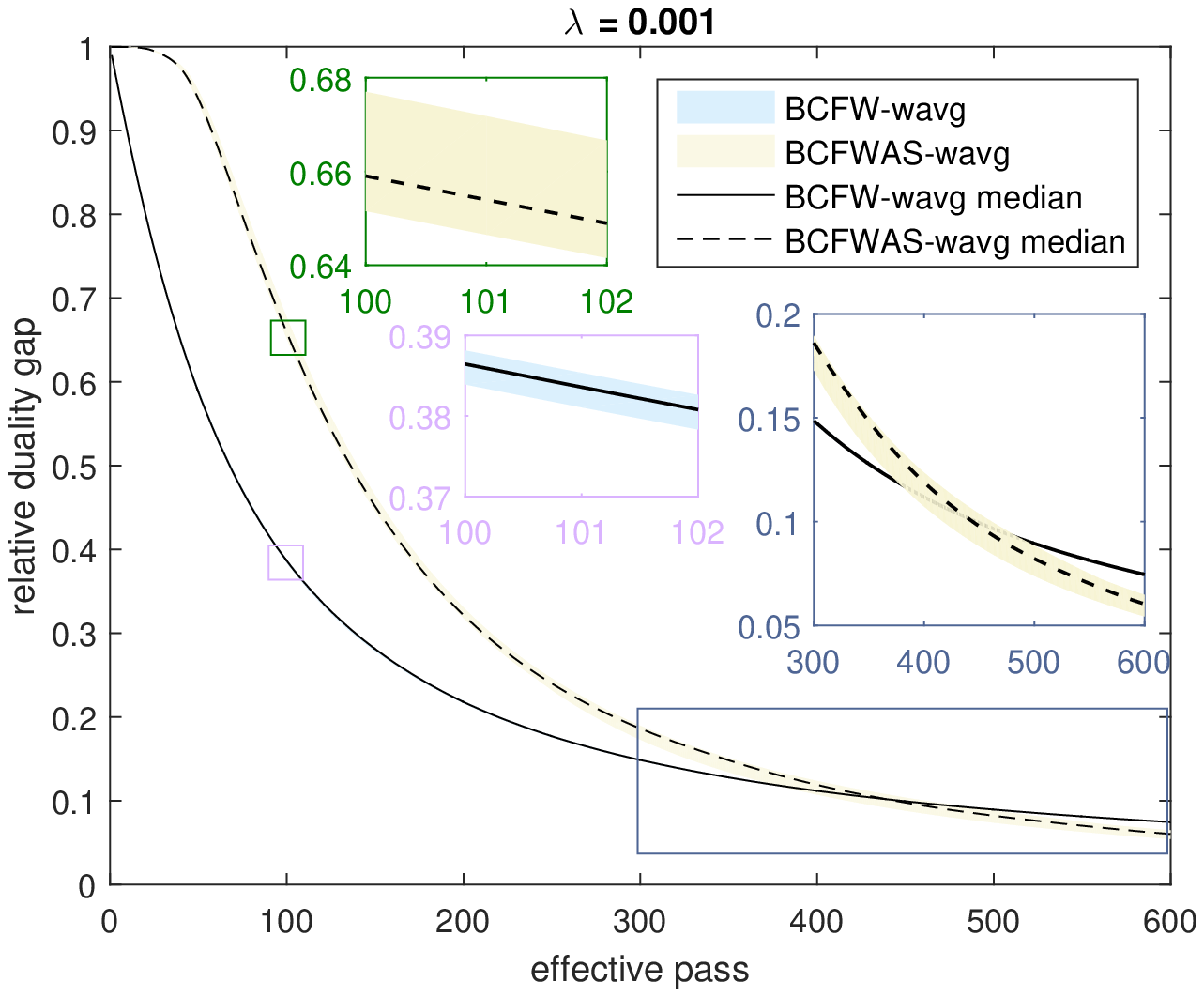}}
\caption{Each figure is plotted the number of effective passes of data versus the relative duality gap. In the implementation, we used the weighted average technique introduced in \cite{LJJSP13} in both algorithms  which outputs the series $\bar{\mb{w}}^{(k+1)} = k/(k+2)\bar{\mb{w}}^{(k)} + 2/(k+2)\mb{w}^{(k+1)}$ and $\bar{\mb{w}^{(0)}} = \mb{w}^{(0)}$. The embedded figures are the corresponding (by frame color) details in the original plots.}
\label{pics:ssvm}
\end{center}
\vskip -0.2in
\end{figure*}
\section{Conclusion, Discussion and Future Work}
In this paper, we established the linear convergence of a semi-stochastic Frank-Wolfe algorithm with away-steps for empirical risk minimization problems and extended it to problems with block-coordinate structure. We applied the algorithms to solve the graph-guided fused LASSO problem and the structural SVM problem. Numerical results indicate the proposed algorithms outperform competing algorithms for these two problems in terms of both iteration cost and number of effective data passes. In addition, the stochastic nature of the proposed algorithms can use an approximate solution to the sub-problems, that is, an inexact oracle which can further reduce the computational cost. Possible extensions of this work include:
\begin{description}
\item[1] In the algorithms, we assume that the Lipschitz constants of the gradient of $F(\cdot)$ are known. This assumption might be avoided by performing back-tracking. 
\item[2] The algorithms are semi-stochastic since we need to use all data to calculate gradients at the final steps. Variance reduced gradient techniques such as the one proposed by \cite{JZ13} might be applicable in stochastic versions of the Frank-Wofe algorithm with away-steps to make it fully stochastic and linearly convergent in high probability.
\item[3] The compact polytope constraint is crucial in the theoretical analysis of the proposed algorithms. Finding ways to relax this assumption is another direction for future work.
\end{description}
\appendix
\section*{Appendix}
\section{Some Useful Lemmas}
The following lemmas are used in the proof of the two theorems in the paper. The first lemma, which is known as the descent lemma, plays a central role in convergence rate analysis. We omit the proof of this lemma since it is a well known result.
\begin{Lemma}[The Descent Lemma] \label{descent_lemma}
Let $f: \mr{R}^n \rightarrow \mr{R}$ be a continuously differentiable function with Lipschitz continuous gradient with constant $L$. Then for any $\mathbf{x}, \mathbf{y} \in \mr{R}^n$ we have 
\begin{align*}
f(\mathbf{y}) \leq f(\mathbf{x}) + \langle \nabla f(\mathbf{x}), \mb{y} - \mb{x} \rangle + \frac{L}{2}\norm{\mb{x} - \mb{y}}^2.
\end{align*}
\end{Lemma}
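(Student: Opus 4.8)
The plan is to reduce the multivariate statement to a one-dimensional computation along the segment joining $\mb{x}$ and $\mb{y}$. First I would set $g:[0,1]\to\mr{R}$, $g(t) = f(\mb{x} + t(\mb{y}-\mb{x}))$. Since $f$ is continuously differentiable, the chain rule gives that $g$ is $C^1$ with $g'(t) = \langle \nabla f(\mb{x}+t(\mb{y}-\mb{x})),\, \mb{y}-\mb{x}\rangle$, so the fundamental theorem of calculus yields
\begin{align*}
f(\mb{y}) - f(\mb{x}) = g(1) - g(0) = \int_0^1 \langle \nabla f(\mb{x}+t(\mb{y}-\mb{x})),\, \mb{y}-\mb{x}\rangle\, dt.
\end{align*}

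Next I would peel off the linear term. Writing $\langle \nabla f(\mb{x}), \mb{y}-\mb{x}\rangle = \int_0^1 \langle \nabla f(\mb{x}), \mb{y}-\mb{x}\rangle\, dt$ and subtracting, the difference of interest becomes a single integral,
\begin{align*}
f(\mb{y}) - f(\mb{x}) - \langle \nabla f(\mb{x}), \mb{y}-\mb{x}\rangle = \int_0^1 \langle \nabla f(\mb{x}+t(\mb{y}-\mb{x})) - \nabla f(\mb{x}),\, \mb{y}-\mb{x}\rangle\, dt.
\end{align*}
Then I apply Cauchy--Schwarz inside the integral and the $L$-Lipschitz continuity of $\nabla f$, which bounds $\norm{\nabla f(\mb{x}+t(\mb{y}-\mb{x})) - \nabla f(\mb{x})} \leq L\norm{t(\mb{y}-\mb{x})} = Lt\norm{\mb{x}-\mb{y}}$. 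Hence the integrand is at most $Lt\norm{\mb{x}-\mb{y}}^2$, and $\int_0^1 Lt\norm{\mb{x}-\mb{y}}^2\, dt = \frac{L}{2}\norm{\mb{x}-\mb{y}}^2$, which is exactly the claimed inequality.

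There is essentially no obstacle here: the only step that warrants a word of care is the first display, namely that $t\mapsto f(\mb{x}+t(\mb{y}-\mb{x}))$ is $C^1$ with the asserted derivative, which is immediate from continuous differentiability of $f$ together with the chain rule; everything afterwards is Cauchy--Schwarz, the Lipschitz hypothesis, and an elementary integral. This is precisely why the authors are content to cite it as a well-known fact rather than prove it.
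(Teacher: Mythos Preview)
Your argument is correct and is the standard proof of the descent lemma. The paper does not give its own proof of this statement---it explicitly omits it as a well-known result---so there is nothing to compare against; your write-up would serve perfectly well as the omitted proof.
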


Hoffman's lemma is useful to bound the distance of a point to the set of optimas. We state the theorem and omit its proof. Please see \cite{Hof52} for references.
\begin{Lemma}[Hoffman's Lemma] \label{hoffman_lemma}
Let $\mathcal{P}$ be a polyhedron defined by $\mathcal{P} = \{\mb{x} \in \mr{R}^p : \mathbf{Cx} \leq \mathbf{d}\}$, for some $\mathbf{C} \in \mr{R}^{m \times p}$ and $\mathbf{d} \in \mathbb{R}^m$, and $\mathcal{S} = \{ \mathbf{x} \in \mathbb{R}^p : {\mathbf{E}}\mathbf{x} = \mathbf{{f}}\}$ where $\mathbf{{E}} \in \mathbb{R}^{r \times p}$ and $\mathbf{{f}} \in \mathbb{R}^r$. Assume that $\mathcal{P} \cap S \neq \emptyset$.
Let $\mathcal{B}$ as the set of all matrices constructed by taking linearly independent rows from the matrix $[\mathbf{{E}}^\top, \mathbf{C}^\top]^\top$, and define $\theta$ as 
\begin{align*}
\theta = \max_{\mathbf{B} \in \mathcal{B}} \frac{1}{\lambda_{\min}(\mathbf{B}\mathbf{B}^\top)}.
\end{align*}
Then for every $\mathbf{x} \in \mathcal{P}$,
\begin{align*}
d(\mathbf{x}, \mathcal{P}\cap \mathcal{S}) \leq \theta \norm{\mathbf{{E}x} - \mathbf{{f}}},
\end{align*}
where $d(\mb{x}, A)$ denotes the distance between a point $x$ and a set $A$, that is, $ d(\mb{x}, A) = \inf\{\norm{\mb{x} - \mb{y}} \; \vert \; \mb{y} \in A\}$.
\end{Lemma}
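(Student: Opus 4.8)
The plan is to prove the error bound by projecting $\mb{x}$ onto the (nonempty, closed, convex) set $Q := \mc{P}\cap\mc{S}$, reading off the first-order optimality condition of that projection, and then reducing the binding constraints to a linearly independent subset so that the smallest eigenvalue $\lambda_{\min}(\mb{B}\mb{B}^\top)$ controls the bound. First I would fix $\mb{x}\in\mc{P}$ and let $\mb{y}^* = \arg\min\{\norm{\mb{x}-\mb{y}} : \mb{y}\in Q\}$, which exists and is unique because $Q$ is nonempty (by hypothesis), closed and convex. Writing $I = \{i : \mb{C}_i\mb{y}^* = d_i\}$ for the inequalities active at $\mb{y}^*$, the optimality condition for the projection is $\mb{x}-\mb{y}^*\in N_Q(\mb{y}^*)$, and for the polyhedron $Q$ this normal cone is generated by the equality rows (free sign) and the active inequality rows (nonnegative sign). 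Hence there exist $\boldsymbol\mu$ (free) and $\lambda_i\ge 0$, $i\in I$, with
\[ \mb{x}-\mb{y}^* = \mb{E}^\top\boldsymbol\mu + \sum_{i\in I}\lambda_i\,\mb{C}_i^\top. \]

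Next I would invoke Carath\'eodory's theorem for cones to thin this representation down to linearly independent rows. If the rows of $\mb{E}$ together with $\{\mb{C}_i : \lambda_i>0\}$ are linearly dependent, a nontrivial dependence relation lets me move one positive $\lambda_i$ to zero while preserving $\lambda_i\ge 0$, the free equality multipliers, and the exact value of $\mb{x}-\mb{y}^*$; iterating produces a submatrix $\mb{B}\in\mc{B}$ consisting of linearly independent rows (some rows of $\mb{E}$ and some $\mb{C}_i$ with $i\in I$) together with a multiplier vector $\boldsymbol\nu$ whose inequality block is $\ge 0$ and whose equality block is $\boldsymbol\mu$, so that $\mb{x}-\mb{y}^* = \mb{B}^\top\boldsymbol\nu$.

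Then I would compute the squared distance. Setting $\mb{w} := \mb{x}-\mb{y}^*$, we have $\norm{\mb{w}}^2 = \mb{w}^\top\mb{B}^\top\boldsymbol\nu = (\mb{B}\mb{w})^\top\boldsymbol\nu$. The $\mb{E}$-rows of $\mb{B}\mb{w}$ evaluate to $\mb{E}\mb{x}-\mb{f}$ (since $\mb{y}^*\in\mc{S}$), while each retained $\mb{C}_i$-row evaluates to $\mb{C}_i\mb{x}-d_i\le 0$ (since $i\in I$ is active at $\mb{y}^*$ and $\mb{x}\in\mc{P}$). As the matching inequality multipliers are nonnegative, these latter terms contribute nonpositively, so $\norm{\mb{w}}^2 \le \boldsymbol\mu^\top(\mb{E}\mb{x}-\mb{f}) \le \norm{\boldsymbol\mu}\,\norm{\mb{E}\mb{x}-\mb{f}} \le \norm{\boldsymbol\nu}\,\norm{\mb{E}\mb{x}-\mb{f}}$. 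Dropping the inequality-violation terms because $\mb{x}$ already lies in $\mc{P}$ is exactly why only the equality residual $\norm{\mb{E}\mb{x}-\mb{f}}$ survives on the right. Finally, full row rank of $\mb{B}$ gives $\norm{\mb{w}}^2 = \boldsymbol\nu^\top\mb{B}\mb{B}^\top\boldsymbol\nu \ge \lambda_{\min}(\mb{B}\mb{B}^\top)\,\norm{\boldsymbol\nu}^2$, so $\norm{\boldsymbol\nu}\le \norm{\mb{w}}/\sqrt{\lambda_{\min}(\mb{B}\mb{B}^\top)}$; substituting and cancelling one factor of $\norm{\mb{w}}$ yields $d(\mb{x},Q)=\norm{\mb{w}}\le \norm{\mb{E}\mb{x}-\mb{f}}/\sqrt{\lambda_{\min}(\mb{B}\mb{B}^\top)}$, and maximizing over $\mb{B}\in\mc{B}$ delivers the Hoffman constant $\theta$.

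I expect the main obstacle to be the Carath\'eodory reduction: one must argue carefully that the dependence-elimination step can always be carried out without destroying nonnegativity of the inequality multipliers and while keeping the representation exact, and that the surviving rows genuinely constitute a member of $\mc{B}$. A secondary bookkeeping point is that the eigenvalue estimate produces a constant of the form $1/\sqrt{\lambda_{\min}(\mb{B}\mb{B}^\top)}$, which is precisely the quantity that $\theta$ encodes, the exact power being a normalization detail in the definition of $\theta$.
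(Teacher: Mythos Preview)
The paper does not prove this lemma; it merely states it and refers the reader to Hoffman's original 1952 paper. Your projection-plus-KKT argument is the standard route and is essentially correct.

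Two minor points worth tightening. First, after the Carath\'eodory reduction the equality block of $\boldsymbol\nu$ need not coincide with the original $\boldsymbol\mu$: the dependence-elimination step may alter the equality multipliers and may even drop some rows of $\mb{E}$. This is harmless for your bound, since you only need that the equality block has norm at most $\norm{\boldsymbol\nu}$ and that the surviving $\mb{E}$-rows of $\mb{B}\mb{w}$ form a subvector of $\mb{E}\mb{x}-\mb{f}$, hence have norm at most $\norm{\mb{E}\mb{x}-\mb{f}}$. Second, as you already flag, the argument delivers the constant $\max_{\mb{B}\in\mc{B}}1/\sqrt{\lambda_{\min}(\mb{B}\mb{B}^\top)}=\sqrt{\theta}$ rather than $\theta$ as the paper writes it; this is a normalization discrepancy in the paper's statement of $\theta$, not a gap in your reasoning.
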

We will refer to $\theta$ as the Hoffman constant associated with matrix $[\mathbf{{E}}^\top, \mathbf{C}^\top]^\top$.

Recall that in the paper, we focus on developing numerical algorithms for the following empirical risk minimization problem,
\begin{align}
\min_{\mb{x} \in \mc{P}} \Big\{F(\mb{x}) \equiv \frac{1}{n}\sum_{i = 1}^n f_i(\mb{a}^\top_i \mb{x}) + \langle \mb{b}, \mb{x} \rangle \Big\}, \label{general_problem} \tag{P1}
\end{align}
where $\mathcal{P}$ is a non-empty compact polyhedron given by $\mathcal{P} = \{x \in \mathbb{R}^p : \mathbf{Cx} \leq \mb{d}\}$ for some $\mathbf{C} \in \mathbb{R}^{m \times p}$, $\mathbf{d} \in \mathbb{R}^m$. For every $i = 1, \ldots, n$, $\mb{a}_i \in \mr{R}^p$, and $f_i : \mr{R} \rightarrow \mr{R}$ is a strongly convex function with parameter $\sigma_i$ and has a Lipschitz continuous gradient with constant $L_i$. $\mb{b}$ in the linear term of (\ref{general_problem}) is a vector in $\mr{R}^p$. Note that the gradient of $F(\cdot)$ is also Lipschitz continuous with constant $L \leq (\sum_{i=1}^nL_i \norm{a_i}) / n$. However, because of the affine transformation in the argument of each $f_i(\cdot)$, $F(\cdot)$ may not be a strongly convex function.

The next lemma characterizes the set of optimal solution to the general problem (\ref{general_problem}). We follow the arguments in Lemma 14 of \cite{WL14} to prove this result.
\begin{Lemma}\label{unique_lemma}
Write $\mb{A} = [\mb{a}_1, \mb{a}_2, \ldots, \mb{a}_n]^\top$ and let $\mc{P}^{*}$ be the set of optimal solutions to (\ref{general_problem}). Then there exists a constant vector $\mb{t}^*$ and a scalar $s^*$ such that any optimal solution $\mb{x}^* \in \mc{P}^*$ satisfies 
\begin{align*}
\mb{A}\mb{x}^* &= \mb{t}^*\\
\langle \mb{b}, \mb{x}^* \rangle &= s^* \\
\mb{C}\mb{x}^* &\leq \mb{d}.
\end{align*}
Furthermore, $\mb{z}$ satisfies the above three conditions if and only if $\mb{z} \in \mc{P}^*$.
\end{Lemma}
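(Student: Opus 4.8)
The plan is to use the strong convexity of each scalar function $f_i$ together with convexity of $\mc{P}$ to force the affine image $\mb{A}\mb{x}$ (and then the scalar $\langle\mb{b},\mb{x}\rangle$) to be constant over $\mc{P}^*$, and afterwards to verify that feasibility together with these two invariants exactly pins down $\mc{P}^*$. This mirrors the structure of Lemma 14 in \cite{WL14}.

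First I would pick two arbitrary optimal points $\mb{x}_1,\mb{x}_2\in\mc{P}^*$ and look at the midpoint $\bar{\mb{x}}=\tfrac12(\mb{x}_1+\mb{x}_2)$, which is feasible by convexity of $\mc{P}$. Applying $\sigma_i$-strong convexity of $f_i$ at the scalars $u=\mb{a}_i^\top\mb{x}_1$ and $v=\mb{a}_i^\top\mb{x}_2$ in the form $f_i(\tfrac{u+v}{2})\le\tfrac12 f_i(u)+\tfrac12 f_i(v)-\tfrac{\sigma_i}{8}(u-v)^2$, summing over $i$, and adding the identity $\langle\mb{b},\bar{\mb{x}}\rangle=\tfrac12\langle\mb{b},\mb{x}_1\rangle+\tfrac12\langle\mb{b},\mb{x}_2\rangle$, I get
\[
F(\bar{\mb{x}})\;\le\;F^*-\frac{1}{8n}\sum_{i=1}^n\sigma_i\big(\mb{a}_i^\top(\mb{x}_1-\mb{x}_2)\big)^2 .
\]
Since $\bar{\mb{x}}\in\mc{P}$ we also have $F(\bar{\mb{x}})\ge F^*$, so the sum on the right must be nonpositive; as every $\sigma_i>0$ this forces $\mb{a}_i^\top\mb{x}_1=\mb{a}_i^\top\mb{x}_2$ for all $i$, i.e. $\mb{A}\mb{x}_1=\mb{A}\mb{x}_2$. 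Hence $\mb{t}^*:=\mb{A}\mb{x}^*$ does not depend on the choice of $\mb{x}^*\in\mc{P}^*$; then $\tfrac1n\sum_i f_i(\mb{a}_i^\top\mb{x}^*)=\tfrac1n\sum_i f_i(t^*_i)$ is likewise constant on $\mc{P}^*$, and since $F(\mb{x}^*)=F^*$ for every $\mb{x}^*\in\mc{P}^*$, the linear part $\langle\mb{b},\mb{x}^*\rangle=F^*-\tfrac1n\sum_i f_i(t^*_i)=:s^*$ is constant as well; finally $\mb{C}\mb{x}^*\le\mb{d}$ holds because $\mc{P}^*\subseteq\mc{P}$.

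For the reverse implication I would take any $\mb{z}$ with $\mb{A}\mb{z}=\mb{t}^*$, $\langle\mb{b},\mb{z}\rangle=s^*$ and $\mb{C}\mb{z}\le\mb{d}$: the last relation gives $\mb{z}\in\mc{P}$, and substituting the first two into the objective yields $F(\mb{z})=\tfrac1n\sum_i f_i(t^*_i)+s^*=F^*$, so $\mb{z}\in\mc{P}^*$. The argument is short and essentially self-contained; the only place that needs a word of care is deducing the coordinatewise equalities $\mb{a}_i^\top\mb{x}_1=\mb{a}_i^\top\mb{x}_2$ from $\sum_i\sigma_i(\mb{a}_i^\top(\mb{x}_1-\mb{x}_2))^2\le 0$, which is exactly where strict positivity of each $\sigma_i$ (equivalently $\sigma_F>0$) enters, and I do not expect any genuine obstacle beyond this bookkeeping.
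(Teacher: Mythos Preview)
Your proof is correct and follows essentially the same midpoint-plus-strong-convexity argument as the paper: both take two optima, use (strong) convexity at the midpoint to force $\mb{a}_i^\top\mb{x}_1=\mb{a}_i^\top\mb{x}_2$ for all $i$, then subtract the now-common nonlinear part from $F(\mb{x}_1^*)=F(\mb{x}_2^*)$ to pin down $\langle\mb{b},\cdot\rangle$, and finish with the straightforward converse. The only cosmetic difference is that you write out the quantitative strong-convexity gap $\tfrac{\sigma_i}{8}(u-v)^2$ explicitly, whereas the paper appeals to equality in Jensen's inequality for a strictly convex function.
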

\begin{proof}
For any $\mb{x}_1^*, \mb{x}_2^* \in \mc{P}^*$, $F(\mb{x}_1^*) = F(\mb{x}_2^*)$; hence, by the convexity of $F(\cdot)$,
\begin{align*}
F(\frac{\mb{x}_1^* + \mb{x}^*_2}{2}) = \frac{1}{2}\{F(\mb{x}^*_1) + F(\mb{x}^*_2)\}.
\end{align*}
Therefore
\begin{align*}
\frac{1}{n}\sum_{i=1}^nf_i\{\frac{\mb{a}_i^\top(\mb{x}^*_1 + \mb{x}^*_2)}{2}\} = \frac{1}{n}\sum_{i=1}^n\frac{1}{2}\{f_i(\mb{a}_i^\top \mb{x}_1^*) + f_i(\mb{a_i^\top\mb{x}_2^*})\}.
\end{align*}
By the strong convexity of $f_i(\cdot)$'s, we have $\mb{a}_i^\top\mb{x}^*_1 = \mb{a}^\top_i\mb{x}^*_2$ for every $i = 1, \ldots, n$. Thus $\mb{t}^* = \mb{A}\mb{x}^*$ is unique for any $\mb{x}^* \in \mc{P}^*$. Similarly, since 
\begin{align*}
\frac{1}{n}\sum_{i=1}^n f_i(\mb{a}^\top_i \mb{x}^*_1) + \langle\mb{b}, \mb{x}^*_1 \rangle = F(\mb{x}^*_1) = F(\mb{x}^*_2) = \frac{1}{n}\sum_{i=1}^n f_i(\mb{a}^\top_i \mb{x}^*_2) + \langle\mb{b}, \mb{x}^*_2 \rangle
\end{align*}
and $t^*_i = \mb{a}^\top_i\mb{x}^*_1 = \mb{a}^\top_i\mb{x}^*_2$, we have $\langle \mb{b}, \mb{x}_1^* \rangle = \langle \mb{b}, \mb{x}_2^* \rangle$ and hence $s^* = \langle \mb{b}, \mb{x}^*\rangle$ is unique for any $\mb{x}^* \in \mc{P}^*$. $\mb{C}\mb{x}^* \leq \mb{d}$ follows from $x^* \in \mc{P}^* \subset \mc{P}$.

Conversely, if $\mb{z}$ satisfies $\mb{Az} = \mb{t}^*$, $\langle\mb{b}, \mb{z}\rangle = s^*$ and $\mb{Cz} \leq \mb{d}$, then
$\mb{z} \in \mc{P}$ and $F(\mb{z}) = F(\mb{x}^*)$ for every $\mb{x}^* \in \mc{P}^*$. Therefore $\mb{z} \in \mc{P}^*$.
\end{proof}
The next two lemmas provide a bound on the distance between a feasible solution and the set of optimas. This result is important for the later developments since it can be treated as an alternative to the more stringent assumption that $F(\cdot)$ is strongly convex.
\begin{Lemma} \label{upper_bound_lemma}
Let $F^*$ be the optimal value of problem (\ref{general_problem}) and write $\mb{G}(\mb{x})= (1/n)[f_1'(\mb{a}_1^\top\mb{x}), \ldots, f_n'(\mb{a}_n^\top\mb{x})]^\top $. Then for any $\mb{x} \in \mc{P}$,  $F(\mb{x}) - F^* \leq C_1$, where $C_1 = GD\norm{\mb{A}} + D\norm{\mb{b}}$ and $G = \sup\{\norm{\mb{G}(\mb{x})}\, \vert\, \mb{x} \in \mc{P}\}$.
\end{Lemma}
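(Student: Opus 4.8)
The plan is to bound $F(\mathbf{x})-F^*$ by a first-order term and then estimate that term geometrically using the diameter $D$ and the gradient bound $G$. First I would fix an arbitrary optimal solution $\mathbf{x}^*\in\mathcal{P}^*$ (which exists and is nonempty by compactness of $\mathcal{P}$ and continuity of $F$), so that $F^* = F(\mathbf{x}^*)$ and $F(\mathbf{x})-F^* = F(\mathbf{x})-F(\mathbf{x}^*)$. Since $F$ is convex and differentiable, the standard first-order (gradient) inequality gives
\begin{align*}
F(\mathbf{x}) - F(\mathbf{x}^*) \leq \langle \nabla F(\mathbf{x}),\, \mathbf{x} - \mathbf{x}^* \rangle .
\end{align*}

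Next I would compute $\nabla F(\mathbf{x})$ explicitly. By the chain rule, $\nabla F(\mathbf{x}) = \frac{1}{n}\sum_{i=1}^n f_i'(\mathbf{a}_i^\top\mathbf{x})\,\mathbf{a}_i + \mathbf{b} = \mathbf{A}^\top \mathbf{G}(\mathbf{x}) + \mathbf{b}$, using $\mathbf{A} = [\mathbf{a}_1,\ldots,\mathbf{a}_n]^\top$ and $\mathbf{G}(\mathbf{x}) = \frac{1}{n}[f_1'(\mathbf{a}_1^\top\mathbf{x}),\ldots,f_n'(\mathbf{a}_n^\top\mathbf{x})]^\top$. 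Substituting and splitting the inner product,
\begin{align*}
\langle \nabla F(\mathbf{x}),\, \mathbf{x}-\mathbf{x}^*\rangle
= \langle \mathbf{G}(\mathbf{x}),\, \mathbf{A}(\mathbf{x}-\mathbf{x}^*)\rangle + \langle \mathbf{b},\, \mathbf{x}-\mathbf{x}^*\rangle .
\end{align*}
Then I would apply Cauchy--Schwarz to each term and the submultiplicativity of the spectral norm: $\langle \mathbf{G}(\mathbf{x}), \mathbf{A}(\mathbf{x}-\mathbf{x}^*)\rangle \leq \norm{\mathbf{G}(\mathbf{x})}\,\norm{\mathbf{A}}\,\norm{\mathbf{x}-\mathbf{x}^*}$ and $\langle \mathbf{b}, \mathbf{x}-\mathbf{x}^*\rangle \leq \norm{\mathbf{b}}\,\norm{\mathbf{x}-\mathbf{x}^*}$.

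Finally I would bound $\norm{\mathbf{G}(\mathbf{x})} \leq G$ (since $\mathbf{x}\in\mathcal{P}$) and $\norm{\mathbf{x}-\mathbf{x}^*}\leq D$ (since both $\mathbf{x},\mathbf{x}^*\in\mathcal{P}$), which yields $F(\mathbf{x})-F^* \leq GD\norm{\mathbf{A}} + D\norm{\mathbf{b}} = C_1$, as claimed. There is no real obstacle here — the argument is routine; the only points deserving a sentence of care are verifying that $\mathcal{P}^*\neq\emptyset$ so the first step makes sense, and correctly identifying $\nabla F(\mathbf{x}) = \mathbf{A}^\top\mathbf{G}(\mathbf{x}) + \mathbf{b}$ so that the $\mathbf{A}$-factor and the $\mathbf{b}$-factor separate cleanly in the final bound.
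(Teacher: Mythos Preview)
Your proposal is correct and follows essentially the same approach as the paper: use convexity to get $F(\mathbf{x})-F^*\leq\langle\nabla F(\mathbf{x}),\mathbf{x}-\mathbf{x}^*\rangle$, write $\nabla F(\mathbf{x})=\mathbf{A}^\top\mathbf{G}(\mathbf{x})+\mathbf{b}$, split the inner product, and apply Cauchy--Schwarz together with the bounds $\norm{\mathbf{G}(\mathbf{x})}\leq G$ and $\norm{\mathbf{x}-\mathbf{x}^*}\leq D$. The only cosmetic difference is that the paper also remarks $G<\infty$ from continuity and compactness, which you already noted elsewhere.
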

\begin{proof}
By continuity of $f_i'(\cdot)$ and compactness of $\mc{P}$, we have $G < \infty$. Convexity of $F(\cdot)$ implies
\begin{align*}
F(x) - F^* &\leq \langle \nabla F(\mb{x}), \mb{x} - \mb{x}^*\rangle \\
               &= \langle \frac{1}{n}\sum_{i=1}^n f_i'(\mb{a}_i^\top \mb{x})\mb{a}_i, \mb{x} - \mb{x}^*\rangle + \langle \mb{b}, \mb{x} - \mb{x}^* \rangle \\ 
               &=\langle \mb{A}^\top \mb{G}(\mb{x}), \mb{x} - \mb{x}^*\rangle + \langle \mb{b}, \mb{x} - \mb{x}^*\rangle\\
               &=\langle \mb{G}(\mb{x}), \mb{A}(\mb{x} - \mb{x}^*)\rangle + \langle \mb{b}, \mb{x} - \mb{x}^*\rangle \\
               &\leq \norm{\mb{G}(\mb{x})}\norm{\mb{A}(\mb{x} - \mb{x}^*)} + \norm{\mb{b}}\norm{\mb{x} - \mb{x}^*}\\
               &\leq GD\norm{\mb{A}} + D\norm{\mb{b}} = C_1.
\end{align*}
\end{proof}

\begin{Lemma}\label{strong_cvx_lemma}
For any $\mathbf{x} \in \mathcal{P}$,  
\begin{align*}
d(\mathbf{x}, \mathcal{P}^*)^2 \leq \kappa (F(\mathbf{x}) - F^*),
\end{align*}
where $\kappa =\theta^2 \{C_1 + 2GD\norm{A} + \frac{2n}{\sigma_F}(G^2+1)\}$, $\sigma_F = \min\{\sigma_1, \ldots, \sigma_n\}$, and $\theta$ is the Hoffman constant associated with the matrix $[\mathbf{C}^\top, \mathbf{A}^\top, \mb{b}^\top]^\top$.
\end{Lemma}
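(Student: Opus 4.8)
The plan is to combine Hoffman's Lemma (Lemma~\ref{hoffman_lemma}) with the characterization of the optimal set in Lemma~\ref{unique_lemma}. By Lemma~\ref{unique_lemma}, the optimal set $\mc{P}^*$ is exactly the polyhedron $\{\mb{z} : \mb{A}\mb{z} = \mb{t}^*,\ \langle\mb{b},\mb{z}\rangle = s^*,\ \mb{C}\mb{z}\le\mb{d}\}$. So I would apply Hoffman's Lemma with the inequality system $\mb{C}\mb{x}\le\mb{d}$ and the equality system $\mb{E}\mb{x}=\mb{f}$ where $\mb{E}=[\mb{A}^\top,\mb{b}]^\top$ and $\mb{f}=[(\mb{t}^*)^\top, s^*]^\top$. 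Since any $\mb{x}\in\mc{P}$ already satisfies $\mb{C}\mb{x}\le\mb{d}$, Hoffman's bound gives
\begin{align*}
d(\mb{x},\mc{P}^*)^2 \le \theta^2\big(\norm{\mb{A}\mb{x}-\mb{t}^*}^2 + (\langle\mb{b},\mb{x}\rangle - s^*)^2\big),
\end{align*}
with $\theta$ the Hoffman constant of $[\mb{C}^\top,\mb{A}^\top,\mb{b}^\top]^\top$. The remaining task is to bound each of the two terms on the right by a constant multiple of $F(\mb{x})-F^*$.

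For the first term, I would fix an optimal $\mb{x}^*$ and write $\norm{\mb{A}\mb{x}-\mb{t}^*}^2 = \sum_{i=1}^n (\mb{a}_i^\top\mb{x} - \mb{a}_i^\top\mb{x}^*)^2$. Strong convexity of each $f_i$ with parameter $\sigma_i\ge\sigma_F$ gives
\begin{align*}
f_i(\mb{a}_i^\top\mb{x}^*) \ge f_i(\mb{a}_i^\top\mb{x}) + f_i'(\mb{a}_i^\top\mb{x})(\mb{a}_i^\top\mb{x}^* - \mb{a}_i^\top\mb{x}) + \tfrac{\sigma_F}{2}(\mb{a}_i^\top\mb{x}-\mb{a}_i^\top\mb{x}^*)^2,
\end{align*}
and summing over $i$, dividing by $n$, and adding $\langle\mb{b},\mb{x}^*-\mb{x}\rangle$ to both sides yields
\begin{align*}
\tfrac{\sigma_F}{2n}\norm{\mb{A}\mb{x}-\mb{t}^*}^2 \le F(\mb{x}) - F^* - \langle\nabla F(\mb{x}),\mb{x}^*-\mb{x}\rangle \le F(\mb{x})-F^* + \langle\nabla F(\mb{x}),\mb{x}-\mb{x}^*\rangle.
\end{align*}
The inner product $\langle\nabla F(\mb{x}),\mb{x}-\mb{x}^*\rangle$ is in turn bounded by $2GD\norm{\mb{A}} + D\norm{\mb{b}}$ by the same chain of Cauchy--Schwarz estimates used in Lemma~\ref{upper_bound_lemma}, and by Lemma~\ref{upper_bound_lemma} itself $\langle\nabla F(\mb{x}),\mb{x}-\mb{x}^*\rangle \le C_1$ as well; so $\norm{\mb{A}\mb{x}-\mb{t}^*}^2 \le \tfrac{2n}{\sigma_F}\big((F(\mb{x})-F^*) + 2GD\norm{\mb{A}} + D\norm{\mb{b}}\big)$, which we rewrite using $C_1$ and crude bounds to match the stated $\kappa$. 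For the second term, $(\langle\mb{b},\mb{x}\rangle - s^*)^2 = \langle\mb{b},\mb{x}-\mb{x}^*\rangle^2$; I would bound $|\langle\mb{b},\mb{x}-\mb{x}^*\rangle|$ by $D\norm{\mb{b}}$ (which is at most $C_1$) and also absorb it, using that $F(\mb{x})-F^*\ge 0$ so that a constant bound $\le C_1$ can be replaced by $\le C_1 + (\text{something})\cdot(F(\mb{x})-F^*)$ when convenient. Collecting the pieces and factoring out $(F(\mb{x})-F^*)$ — again using $F(\mb{x})-F^*\le C_1$ to turn any standalone constant into a multiple of $F(\mb{x})-F^*$ divided through appropriately — produces $d(\mb{x},\mc{P}^*)^2 \le \theta^2\{C_1 + 2GD\norm{\mb{A}} + \tfrac{2n}{\sigma_F}(G^2+1)\}(F(\mb{x})-F^*)$, i.e. the claimed $\kappa$.

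The main obstacle I anticipate is purely bookkeeping rather than conceptual: matching the precise form of $\kappa$ requires being careful about which terms get divided by $(F(\mb{x})-F^*)$ versus which are already multiplicative, and the appearance of the extra ``$+1$'' inside $(G^2+1)$ and the factor $\tfrac{2n}{\sigma_F}$ suggests the authors bound $(\langle\mb{b},\mb{x}-\mb{x}^*\rangle)^2$ by folding it into the same $\tfrac{2n}{\sigma_F}$ factor via a similar strong-convexity-style inequality or a uniform $G^2+1$ bound on the combined residual $\norm{\mb{E}\mb{x}-\mb{f}}^2$. I would reconcile this by bounding $\norm{\mb{E}\mb{x}-\mb{f}}^2 = \norm{\mb{A}\mb{x}-\mb{t}^*}^2 + (\langle\mb{b},\mb{x}\rangle-s^*)^2$ together, using the strong-convexity estimate for the first summand and a trivial estimate $|\langle\mb{b},\mb{x}-\mb{x}^*\rangle|\le\norm{\mb{G}(\mb{x})}\cdot(\text{const})$-type bound normalized so its square contributes the ``$+1$'' after scaling by $\tfrac{2n}{\sigma_F}$. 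Once the constants line up, the result is immediate.
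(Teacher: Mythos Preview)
Your Hoffman-plus-Lemma~\ref{unique_lemma} setup is exactly right, and it matches the paper. The problem is in how you propose to bound the two residuals $\norm{\mb{A}\mb{x}-\mb{t}^*}^2$ and $(\langle\mb{b},\mb{x}\rangle-s^*)^2$. Your plan produces only \emph{constant} bounds on each (e.g.\ $\norm{\mb{A}\mb{x}-\mb{t}^*}^2 \le \tfrac{2n}{\sigma_F}\big((F(\mb{x})-F^*)+C_1\big)$ and $(\langle\mb{b},\mb{x}\rangle-s^*)^2\le (D\norm{\mb{b}})^2$), and then relies on the sentence ``using $F(\mb{x})-F^*\le C_1$ to turn any standalone constant into a multiple of $F(\mb{x})-F^*$.'' That inequality runs the wrong way: from $F(\mb{x})-F^*\le C_1$ you get $\tfrac{K}{C_1}(F(\mb{x})-F^*)\le K$, not $K\le \tfrac{K}{C_1}(F(\mb{x})-F^*)$. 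So your argument would only yield $d(\mb{x},\mc{P}^*)^2\le\text{const}$, which is trivial, not the claimed $\le\kappa(F(\mb{x})-F^*)$.

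The paper avoids this with two genuinely different ingredients. First, it expands strong convexity at $\mb{x}^*$ (not at $\mb{x}$) and combines it with the first-order optimality condition $\langle\nabla F(\mb{x}^*),\mb{x}-\mb{x}^*\rangle\ge 0$ to obtain the clean multiplicative bound $\norm{\mb{A}\mb{x}-\mb{t}^*}^2 \le \tfrac{2n}{\sigma_F}(F(\mb{x})-F^*)$ with no additive constant. Second, for the linear term the paper shows (again via optimality and convexity) that $|\langle\mb{b},\mb{x}\rangle-s^*|\le (F(\mb{x})-F^*)+G\norm{\mb{A}\mb{x}-\mb{t}^*}$; squaring this produces a $(F(\mb{x})-F^*)^2$ term, and \emph{now} the inequality $F(\mb{x})-F^*\le C_1$ is used legitimately as $(F(\mb{x})-F^*)^2\le C_1(F(\mb{x})-F^*)$, while the cross term and the $G^2\norm{\mb{A}\mb{x}-\mb{t}^*}^2$ term are handled with the step-one bound. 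Adding the two residual estimates is what makes the ``$+1$'' appear in $\tfrac{2n}{\sigma_F}(G^2+1)$. Your strong-convexity expansion at $\mb{x}$ leaves $\langle\nabla F(\mb{x}),\mb{x}-\mb{x}^*\rangle$ on the right, and there is no uniform bound on that quantity by a multiple of $F(\mb{x})-F^*$; switching the base point to $\mb{x}^*$ and invoking optimality is the missing idea.
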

\begin{proof}
By Lemma \ref{unique_lemma}, the optimal solution set $\mc{P}^*$ can be defined as $\mc{P}^* = \mc{P} \cap \mc{S}$, where $\mc{S} = \{\mb{x} \in \mr{R}^p \, \vert \, \mb{A}\mb{x} = \mb{t}^*, \langle\mb{b}, \mb{x}\rangle = s^*\}$ for some $\mb{t}^* \in \mr{R}^n$ and $s^* \in \mr{R}$. For any $\mb{x} \in \mc{P}$ , we have
\begin{align*}
d(\mb{x}, \mc{P}^*)^2 \leq \theta^2\{(\langle \mb{b}, \mb{x}\rangle - s^*)^2 + \norm{\mb{A}\mb{x} - \mb{t}^*}^2\} \tag{by lemma \ref{hoffman_lemma}},
\end{align*}
where $\theta$ is the Hoffman's constant associated with matrix $[\mb{c}^\top, \mb{A}^\top, \mb{b}]^\top$. Now, let $\mb{x} \in \mc{P}$ and $\mb{x}^* \in \mc{P}^*$. By strong convexity of the $f_i$'s, we have
\begin{align} \label{strong_expansion}
\frac{1}{n}\sum_{i=1}^n \{f_i(\mb{a}_i^\top \mb{x}) -f_i(\mb{a}_i^\top\mb{x}^*)\} \geq \langle \frac{1}{n}\sum_{i=1}^n f_i'(\mb{a}_i^\top \mb{x}^*)\mb{a}_i, \mb{x} - \mb{x}^*\rangle + \frac{1}{n}\sum_{i=1}^n\frac{\sigma_i}{2}\{\mb{a}_i^\top(\mb{x} - \mb{x}^*)\}^2.
\end{align}
By the first-order optimality conditions, we have $\langle \nabla F(\mb{x}^*), \mb{x} - \mb{x}^* \rangle \geq 0$. Therefore
\begin{align}
 \frac{1}{n}\sum_{i=1}^n\frac{\sigma_i}{2}(\mb{a}^\top _i \mb{x} - t^*_i)^2 &= \frac{1}{n}\sum_{i=1}^n\frac{\sigma_i}{2}\{\mb{a}_i^\top(\mb{x} - \mb{x}^*)\}^2\nonumber \\
 &\leq \langle \nabla F(\mb{x}^*), \mb{x}-\mb{x}^*\rangle + \frac{1}{n}\sum_{i=1}^n\frac{\sigma_i}{2}\{\mb{a}^\top_i (\mb{x} - \mb{x}^*)\}^2 \nonumber \\
					&= \langle \frac{1}{n}\sum_{i=1}^n f'_i(\mb{a}_i^\top \mb{x}^*)\mb{a}_i + \mb{b}, \mb{x} - \mb{x}^* \rangle + \frac{1}{n}\sum_{i=1}^n \frac{\sigma_i}{2}\{\mb{a}_i^\top(\mb{x} - \mb{x}^*)\}^2 \nonumber \\
					&\leq \frac{1}{n}\sum_{i=1}^n\{f_i(\mb{a}_i^\top\mb{x}) - f_i(\mb{a}_i^\top\mb{x}^*)\} + \langle \mb{b}, \mb{x} - \mb{x}^*\rangle \tag{by (\ref{strong_expansion})} \nonumber\\
					& = F(\mb{x}) - F(\mb{x}^*). \label{matrix_bound}
\end{align}
Hence, we can bound $\norm{\mb{A}\mb{x} - \mb{t}^*}^2$ by 
\begin{align*}
\norm{\mb{Ax} - \mb{t}^*}^2  &= \sum_{i=1}^n(\mb{a}_i^\top\mb{x} - t_i^*)^2 \\
&= \frac{n}{2} \frac{1}{n}\sum_{i=1}^n \frac{\sigma_i}{2\sigma_i} (\mb{a}_i^\top \mb{x} - t^*_i)^2 \\
&\leq \frac{n}{2\sigma_F}\frac{1}{n}\sum_{i=1}^n\frac{\sigma_i}{2}(\mb{a}_i^\top \mb{x} - t^*_i)^2\\
&\leq \frac{n}{2\sigma_F}(F(\mb{x}) - F(\mb{x}^*))  \tag{by (\ref{matrix_bound})}.\\
\end{align*}
Next we need to bound $(\langle \mb{b}, \mb{x} \rangle - s^*)^2$ from above. By definition of $s^*$, $F(\cdot)$ and the fact that $\langle \nabla F(\mb{x}^*), \mb{x} - \mb{x}^* \rangle \geq 0$, we have
\begin{align*}
\langle \mb{b}, \mb{x} \rangle - s^* &= \langle \mb{b}, \mb{x} - \mb{x}^* \rangle \\
													 & = \langle \nabla F(\mb{x}^*), \mb{x} - \mb{x}^* \rangle - \langle \frac{1}{n}\sum_{i=1}^n f_i'(\mb{a}_i^\top \mb{x})\mb{a}_i, \mb{x} - \mb{x}^* \rangle \\
													 & \geq -\langle \mb{A}^\top \mb{G}(\mb{x}), \mb{x} - \mb{x}^* \rangle \\
													 & = -\langle \mb{G}(\mb{x}), \mb{A}(\mb{x} - \mb{x}^*) \rangle \\
													 &= - \langle \mb{G}(\mb{x}), \mb{A}\mb{x} - \mb{t}^* \rangle \\
													 &\geq -\norm{\mb{G}(\mb{x})}\norm{\mb{Ax} - \mb{t}^*} \tag{by Cauchy Schwarz inequality}.
\end{align*}
On the other hand, by the convexity of $F(\cdot)$, we have $F(\mb{x}) - F^* \geq \langle \nabla F(\mb{x}^*), \mb{x} - \mb{x}^* \rangle$. Hence
\begin{align*}
\langle \mb{b}, \mb{x} \rangle - s^* &= \langle \nabla F(\mb{x}^*), \mb{x} - \mb{x}^* \rangle - \langle \frac{1}{n}\sum_{i=1}^n f_i'(\mb{a}_i^\top \mb{x})\mb{a}_i, \mb{x} - \mb{x}^* \rangle \\
													 &= \langle \nabla F(\mb{x}^*), \mb{x} - \mb{x}^* \rangle - \langle \mb{A}^\top \mb{G}(\mb{x}^*), \mb{x} - \mb{x}^* \rangle \\
													 &\leq F(\mb{x}) - F^* -\langle \mb{G}(\mb{x}^*), \mb{A}\mb{x} - \mb{t}^* \rangle \\
													 &\leq F(\mb{x}) - F^* + \norm{\mb{G}(\mb{x}^*)}\norm{\mb{A}\mb{x} - \mb{t}^*}.
\end{align*}
Then,
\begin{align*}
(\langle \mb{b},\mb{x}\rangle - s^*)^2 &\leq (F(\mb{x}) - F^* + \norm{\mb{G}(\mb{x}^*)}\norm{\mb{A}\mb{x} - \mb{t}^*})^2 \\
															 &\leq (F(\mb{x}) - F^*)^2 + 2\norm{\mb{G}(\mb{x}^*)}\norm{\mb{A}(\mb{x} - \mb{x}^*)}(F(\mb{x}) - F^*) + \norm{\mb{G}(\mb{x}^*)}^2\norm{\mb{A}(\mb{x} - \mb{x}^*)}^2 \\
															 & \leq (F(\mb{x}) - F^*)C_1 + 2GD\norm{A}(F(\mb{x}) - F^*) + \frac{2nG^2}{\sigma_F}(F(\mb{x}) - F^*)\\
															 &=(C_1 + 2GD\norm{A} + \frac{2nG^2}{\sigma_F})(F(\mb{x}) - F^*).
\end{align*}
Therefore, we have
\begin{align*}
d(\mb{x}, \mc{P}^*) &\leq \theta^2 \{C_1 + 2GD\norm{A} + \frac{2n}{\sigma_F}(G^2+1)\}(F(\mb{x}) - F^*) \\
								& = \kappa (F(\mb{x}) - F^*).
\end{align*}
\end{proof}

In the following two lemmas and corollary, we denote by the index set of active constraints at $\mb{x}$ $I(\mb{x})$, i.e.,  $I(\mb{x}) = \{i \in {1, \ldots, m} \vert \mb{C}_i\mb{x} = d_i\}$. In a similar way, we define the set of active constraints for a set $U$ by  $I(U) = \{i \in \{1, \ldots, m\} \quad \vert \quad \mb{C}_i\mb{v} = d_i, \forall \mb{v} \in U\} = \cap_{\mb{v} \in U}I(\mb{v})$. The standard unit simplex in $\mathbb{R}^n$ is denoted by $\Delta_n =\{\mathbf{x} \in \mathbb{R}^n_+ : \sum_{i = 1}^n x_i = 1\}$ and its relative interior by $\Delta^+_n = \{\mathbf{x} \in \mathbb{R}^n_{++} : \sum_{i = 1}^n x_i = 1\}$.  Given a set $X \subset \mathbf{R}^n$, its convex hull is denoted by $\text{conv}(X)$. Given a convex set $C$, the set of all its extreme points is denoted by $\text{ext}(C)$. Proofs of the following two lemmas can be found in \cite{BS15}.
\begin{Lemma} \label{critical_lemma}
Given $U \subset V$ and $\mb{c} \in \mathbb{R}^p$. If there exists a $\mathbf{z} \in \mathbb{R}^p$ such that $\mathbf{C}_{I(U)}\mathbf{z} \leq 0$ and $\langle \mathbf{c}, \mathbf{z} \rangle > 0$, then 
\begin{align*}
\max_{\mathbf{p} \in V, \mathbf{u} \in U}\langle \mathbf{c}, \mathbf{p - u} \rangle \geq \frac{\Omega_\mathcal{P}}{\vert U \vert} \frac{\langle \mathbf{c}, \mathbf{z} \rangle}{\norm{\mathbf{z}}}
\end{align*}
where 
\begin{align*}
 \Omega_\mathcal{P} = \frac{\zeta}{\phi}
\end{align*}
for
\begin{align*}
\zeta &= \min_{\mathbf{v} \in V, i \in \{1, \ldots, m\}: a_i > \mathbf{C}_i \mathbf{v}} (d_i - \mathbf{C}_i\mathbf{v}), \\
\phi &= \max_{i \in \{1, \ldots, m\} / I(V)}\norm{\mathbf{C}_i}.
\end{align*}
\end{Lemma}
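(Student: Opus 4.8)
The plan is to certify the pairwise (``away'') gap $\max_{\mb{p}\in V,\mb{u}\in U}\langle\mb{c},\mb{p}-\mb{u}\rangle$ by exhibiting one explicit feasible point. Starting from the barycenter $\hat{\mb{u}}$ of $U$, I would travel along $\mb{z}$ until the segment first leaves $\mc{P}$; the landing point $\mb{y}$ lies in $\text{conv}(V)$, so $\langle\mb{c},\mb{y}\rangle\le\max_{\mb{p}\in V}\langle\mb{c},\mb{p}\rangle$, while $\langle\mb{c},\hat{\mb{u}}\rangle\ge\min_{\mb{u}\in U}\langle\mb{c},\mb{u}\rangle$ since $\hat{\mb{u}}$ is an average of the $\mb{u}\in U$. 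Along the segment the $\mb{c}$-value increases by exactly $t^{*}\langle\mb{c},\mb{z}\rangle$, with $t^{*}$ the step length, so the whole argument reduces to lower-bounding $t^{*}$ by $\Omega_{\mc{P}}/(|U|\norm{\mb{z}})$.

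First I would make the geometry precise. Set $\hat{\mb{u}}=\frac{1}{|U|}\sum_{\mb{u}\in U}\mb{u}\in\text{conv}(U)\subseteq\mc{P}$ and split the constraints. For $i\in I(U)$ the hypothesis $\mb{C}_{I(U)}\mb{z}\le 0$ keeps $\mb{C}_i(\hat{\mb{u}}+t\mb{z})=d_i+t\mb{C}_i\mb{z}\le d_i$ for all $t\ge 0$. For $i\notin I(U)$ some $\mb{u}_0\in U$ has $\mb{C}_i\mb{u}_0<d_i$, hence $d_i-\mb{C}_i\mb{u}_0\ge\zeta$ by the definition of $\zeta$; averaging the nonnegative slacks then gives $d_i-\mb{C}_i\hat{\mb{u}}\ge\zeta/|U|>0$. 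Consequently the maximal feasible step
\begin{align*}
t^{*} := \min\Big\{\frac{d_i-\mb{C}_i\hat{\mb{u}}}{\mb{C}_i\mb{z}}\ :\ i\notin I(U),\ \mb{C}_i\mb{z}>0\Big\}
\end{align*}
is well defined: the index set is nonempty, since otherwise $\mb{C}\mb{z}\le\mb{0}$ and $\{\hat{\mb{u}}+t\mb{z}:t\ge 0\}$ would be a ray inside the compact set $\mc{P}$ (note $\mb{z}\ne\mb{0}$ because $\langle\mb{c},\mb{z}\rangle>0$). Put $\mb{y}:=\hat{\mb{u}}+t^{*}\mb{z}\in\mc{P}$.

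Next I would bound $t^{*}$ and assemble the estimate. Any $i$ attaining the minimum has $i\notin I(U)$, and since $U\subseteq V$ forces $I(V)\subseteq I(U)$, also $i\notin I(V)$, so $\norm{\mb{C}_i}\le\phi$ and $\mb{C}_i\mb{z}\le\phi\norm{\mb{z}}$; together with $d_i-\mb{C}_i\hat{\mb{u}}\ge\zeta/|U|$ this gives $t^{*}\ge\zeta/(|U|\phi\norm{\mb{z}})=\Omega_{\mc{P}}/(|U|\norm{\mb{z}})$. Then
\begin{align*}
\max_{\mb{p}\in V,\,\mb{u}\in U}\langle\mb{c},\mb{p}-\mb{u}\rangle
&=\max_{\mb{p}\in V}\langle\mb{c},\mb{p}\rangle-\min_{\mb{u}\in U}\langle\mb{c},\mb{u}\rangle \\
&\ge\langle\mb{c},\mb{y}\rangle-\langle\mb{c},\hat{\mb{u}}\rangle
=t^{*}\langle\mb{c},\mb{z}\rangle,
\end{align*}
and multiplying the lower bound on $t^{*}$ by $\langle\mb{c},\mb{z}\rangle>0$ yields the claim.

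The main (and essentially only) delicate point is the first step: identifying that exactly the constraints with $i\notin I(U)$ and $\mb{C}_i\mb{z}>0$ can become tight as one moves off $\hat{\mb{u}}$ along $\mb{z}$, and showing that their slacks at $\hat{\mb{u}}$ are not merely positive but bounded below by $\zeta/|U|$. That quantitative slack bound, combined with $\norm{\mb{C}_i}\le\phi$ (which uses $I(V)\subseteq I(U)$), is precisely what produces the $\Omega_{\mc{P}}/|U|$ factor; everything else is routine convex-combination bookkeeping.
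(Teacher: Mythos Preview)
Your argument is correct. The paper itself does not prove this lemma; it states that ``Proofs of the following two lemmas can be found in \cite{BS15}'' and simply cites Beck and Shtern. Your proof is in fact essentially the argument given there: move from the barycenter of $U$ along the direction $\mb{z}$ until hitting the boundary of $\mc{P}$, use the hypothesis $\mb{C}_{I(U)}\mb{z}\le 0$ to show that only constraints with $i\notin I(U)$ can block the ray, lower-bound the slack at $\hat{\mb{u}}$ for those constraints by $\zeta/|U|$, and upper-bound $\mb{C}_i\mb{z}$ by $\phi\norm{\mb{z}}$ using $I(V)\subseteq I(U)$. All the steps you flag as delicate --- nonemptiness of the blocking index set via compactness, the $\zeta/|U|$ slack bound from averaging, and the containment $I(V)\subseteq I(U)$ --- are handled correctly.
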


\begin{Lemma}\label{index_lemma}
Let $\mathbf{x} \in \mathcal{K}$ and the set $U \subset V$ satisfy $\mathbf{x} =  \sum_{\mathbf{v} \in U} \mu_\mathbf{v} \mathbf{v}$, where $\mathbf{\mu} \in \Delta^+_{\vert U \vert}$. Then $I(\mathbf{x}) = I(U)$.
\end{Lemma}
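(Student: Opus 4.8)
The plan is to prove the set equality $I(\mathbf{x}) = I(U)$ by establishing the two inclusions separately; the argument is entirely elementary, and the only place where a hypothesis does real work is the requirement $\boldsymbol\mu \in \Delta^+_{|U|}$, i.e.\ that all barycentric weights are strictly positive. For the inclusion $I(U) \subseteq I(\mathbf{x})$ I would simply use linearity of $\mathbf{C}$: if a row index $i$ is active at every vertex in $U$, so that $\mathbf{C}_i \mathbf{v} = d_i$ for all $\mathbf{v} \in U$, then averaging these identities against the weights $\mu_\mathbf{v}$ (which sum to one) yields $\mathbf{C}_i \mathbf{x} = \sum_{\mathbf{v} \in U} \mu_\mathbf{v} \mathbf{C}_i \mathbf{v} = d_i$, hence $i \in I(\mathbf{x})$. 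Note this half needs only $\boldsymbol\mu \in \Delta_{|U|}$, not strict positivity.

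For the reverse inclusion $I(\mathbf{x}) \subseteq I(U)$ I would start from $i \in I(\mathbf{x})$, i.e.\ $\mathbf{C}_i \mathbf{x} = d_i$, and use that every $\mathbf{v} \in U \subset V \subset \mc{P}$ satisfies $\mathbf{C}_i \mathbf{v} \le d_i$, so that
\[
0 = d_i - \mathbf{C}_i \mathbf{x} = \sum_{\mathbf{v} \in U} \mu_\mathbf{v}\,(d_i - \mathbf{C}_i \mathbf{v})
\]
is a sum of nonnegative terms. Each term must therefore vanish; and since $\mu_\mathbf{v} > 0$ for every $\mathbf{v} \in U$, this forces $\mathbf{C}_i \mathbf{v} = d_i$ for all $\mathbf{v} \in U$, i.e.\ $i \in \bigcap_{\mathbf{v} \in U} I(\mathbf{v}) = I(U)$. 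Combining the two inclusions gives the claim.

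The main (indeed only) point requiring care is this last step: the strict positivity of the $\mu_\mathbf{v}$ is exactly what prevents a constraint that is active at the combination $\mathbf{x}$ from being slack at some participating vertex, and without it (for instance if some $\mu_\mathbf{v} = 0$) the claimed equality fails. No results from the preceding lemmas — Hoffman's lemma, the distance bounds of Lemmas \ref{upper_bound_lemma}--\ref{strong_cvx_lemma}, or Lemma \ref{critical_lemma} — are needed here.
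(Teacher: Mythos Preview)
Your proof is correct and complete: both inclusions are handled cleanly, and you rightly isolate strict positivity of the weights as the one hypothesis doing work in the direction $I(\mathbf{x}) \subseteq I(U)$. The paper does not actually supply its own proof of this lemma --- it simply cites \cite{BS15} --- so there is nothing to compare against; your elementary argument is exactly the standard one and would serve well in place of the citation.
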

\begin{Corollary}\label{linear_approx_lower_bound}
For any $\mathbf{x} \in \mathcal{P} / \mathcal{P}^*$ that can be represented as $\mathbf{x} = \sum_{\mathbf{v} \in U}\mu_{\mathbf{v}}\mathbf{v}$ for some $\mathbf{\mu} \in \Delta_{\vert U \vert}^+$ and $U \subset V$, it holds that, 
\begin{align*}
\max_{\mathbf{u} \in U, \mathbf{p} \in V}\langle \nabla f(\mathbf{x}), \mathbf{u - p} \rangle \geq \frac{\Omega_{\mathcal{K}}}{\vert U \vert} \frac{\langle \nabla f(\mathbf{x}), \mathbf{x} - \mathbf{x}^* \rangle}{\norm{\mathbf{x - x^*}}}.
\end{align*}
\end{Corollary}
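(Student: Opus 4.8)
The plan is to obtain the corollary as a one‑line consequence of Lemma \ref{critical_lemma}, after a correct choice of the vector $\mb{c}$ and the direction $\mb{z}$ and an identification of the active index set via Lemma \ref{index_lemma}. Fix $\mb{x}^* \in \mc{P}^*$ (the point appearing in the statement, e.g.\ the projection of $\mb{x}$ onto $\mc{P}^*$); since $\mb{x}\notin\mc{P}^*$ we have $\mb{x}\neq\mb{x}^*$, so $\norm{\mb{x}-\mb{x}^*}>0$ and the right‑hand side is well defined. I would then invoke Lemma \ref{critical_lemma} with $\mb{c} = -\nabla F(\mb{x})$ and $\mb{z} = \mb{x}^* - \mb{x}$.

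First I would check the geometric hypothesis $\mb{C}_{I(U)}\mb{z}\le\mb{0}$. Because $\mb{x}=\sum_{\mb{v}\in U}\mu_{\mb{v}}\mb{v}$ with $\boldsymbol\mu\in\Delta^+_{\vert U\vert}$, Lemma \ref{index_lemma} gives $I(U)=I(\mb{x})$; and for every $i\in I(\mb{x})$ we have $\mb{C}_i\mb{x}=d_i$ while $\mb{C}_i\mb{x}^*\le d_i$ since $\mb{x}^*\in\mc{P}$, hence $\mb{C}_i(\mb{x}^*-\mb{x})=\mb{C}_i\mb{x}^*-d_i\le 0$, i.e.\ $\mb{C}_{I(U)}\mb{z}\le\mb{0}$. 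Next I would check the directional hypothesis $\langle\mb{c},\mb{z}\rangle>0$, i.e.\ $\langle\nabla F(\mb{x}),\mb{x}-\mb{x}^*\rangle>0$: by convexity of the objective, $\langle\nabla F(\mb{x}),\mb{x}-\mb{x}^*\rangle \ge F(\mb{x})-F(\mb{x}^*) = F(\mb{x})-F^* > 0$, the last inequality because $\mb{x}\in\mc{P}\setminus\mc{P}^*$. With both hypotheses verified, Lemma \ref{critical_lemma} yields
\[
\max_{\mb{p}\in V,\ \mb{u}\in U}\langle -\nabla F(\mb{x}),\ \mb{p}-\mb{u}\rangle\ \ge\ \frac{\Omega_{\mc{P}}}{\vert U\vert}\,\frac{\langle -\nabla F(\mb{x}),\ \mb{x}^*-\mb{x}\rangle}{\norm{\mb{x}^*-\mb{x}}},
\]
and rewriting $\langle -\nabla F(\mb{x}),\mb{p}-\mb{u}\rangle=\langle\nabla F(\mb{x}),\mb{u}-\mb{p}\rangle$ on the left and $\langle -\nabla F(\mb{x}),\mb{x}^*-\mb{x}\rangle=\langle\nabla F(\mb{x}),\mb{x}-\mb{x}^*\rangle$ on the right (and $\norm{\mb{x}^*-\mb{x}}=\norm{\mb{x}-\mb{x}^*}$, $\Omega_{\mc{P}}=\Omega_{\mc{K}}$) gives exactly the claimed bound.

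The only real subtlety, and the step I would be most careful with, is the sign bookkeeping: Lemma \ref{critical_lemma} states the gap for $\mb{p}-\mb{u}$ whereas the corollary is phrased with $\mb{u}-\mb{p}$, which is what forces the choices $\mb{c}=-\nabla F(\mb{x})$ and $\mb{z}=\mb{x}^*-\mb{x}$; in particular the one‑sided constraint $\mb{C}_{I(U)}\mb{z}\le\mb{0}$ holds only for $\mb{z}=\mb{x}^*-\mb{x}$ and would be reversed for the opposite direction, so the lemma could not be applied. Everything else is routine.
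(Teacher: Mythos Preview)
Your proposal is correct and follows precisely the route the paper intends: the corollary is stated without proof immediately after Lemmas~\ref{critical_lemma} and~\ref{index_lemma} (both deferred to \cite{BS15}), and your argument is exactly the natural derivation from those two lemmas---use Lemma~\ref{index_lemma} to identify $I(U)=I(\mb{x})$, verify the feasibility-direction hypothesis $\mb{C}_{I(U)}(\mb{x}^*-\mb{x})\le\mb{0}$ and the sign hypothesis via convexity, then invoke Lemma~\ref{critical_lemma} with $\mb{c}=-\nabla F(\mb{x})$, $\mb{z}=\mb{x}^*-\mb{x}$. Your remark about the sign bookkeeping (that only $\mb{z}=\mb{x}^*-\mb{x}$, not its negative, satisfies the constraint hypothesis) is the one nontrivial point, and you handle it correctly.
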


\section{Proof of Theorem 1}
\begin{proof}
Let $\delta_{\mc{J}}$ denote the indicator function that 
\begin{align*}
\delta_{\mc{J}}(i) = \left\{ \begin{array}{ll}
         1 & \mbox{if $i \in \mc{J}$};\\
        0 & \mbox{if $i \not\in \mc{J}$}.\end{array} \right.
\end{align*}
First note that the sampling scheme ensures the following two properties of $\mb{g}^\k$:
\begin{align*}
\mr{E}(\mb{g}^\k \: \vert \: \mc{F}^\k) &= \frac{1}{m_k}\sum_{i = 1}^n\mr{E}\{\delta_{(i \in \mc{J})} \: \vert \: \mc{F}^\k\} f'_i(\mb{a}_i^\top \mb{x}^\k)\mb{a}_i \\
							&=\frac{1}{m_k}\sum_{i = 1}^n\frac{m_k}{n} f'_i(\mb{a}_i^\top \mb{x}^\k)\mb{a}_i \\
							&=\nabla F(\mb{x}^\k) \\
\end{align*}
and for $m_k \leq n$
\begin{align*}
\mr{E}\{\norm{\mb{g}^\k - \nabla F(\mb{x}^\k)}^2 \: \vert \: \mc{F}^{(k)}\} &=\frac{1}{m_k^2}[\sum_{i=1}^n \frac{m_k}{n}\{f_i'(\mb{a}_i^\top\mb{x}^\k)\}^2\mb{a}_i^\top\mb{a}_i+ \sum_{i \neq j}\frac{m_k(m_k -1)}{n(n-1)}f_i'(\mb{a}_i^\top\mb{x}^\k)f_j'(\mb{a}_i^\top\mb{x}^\k)\mb{a}^\top_i\mb{a}_j]\\
&\quad - \{\frac{1}{n}\sum_{i = 1}^n f'_i(\mb{a}_i^\top\mb{x}^\k)\mb{a}_i\}^\top\{\frac{1}{n}\sum_{i = 1}^n f'_i(\mb{a}_i^\top\mb{x}^\k)\mb{a}_i\}\\
&= (\frac{1}{m_k} - \frac{1}{n})[\frac{1}{n}\sum_{i=1}^n \{f_i'(\mb{a}_i^\top\mb{x}^\k)\}^2\mb{a}_i^\top\mb{a}_i - \frac{1}{n(n-1)}\sum_{i \neq j}f_i'(\mb{a}_i^\top\mb{x}^\k)f_j'(\mb{a}_i^\top\mb{x}^\k)\mb{a}^\top_i\mb{a}_j]\\
&\leq (\frac{1}{m_k} - \frac{1}{n})C_2.
\end{align*}
Hence we also have the unconditional version of the above relations, i.e.,
\begin{align*}
\mr{E}(\mb{g}^\k) = \mr{E}\{\nabla F(\mb{x}^\k)\}, 
\end{align*}
and 
\begin{align*}
\mr{E}\{\norm{\mb{g}^\k - \nabla F(\mb{x}^\k)}^2\} \leq (\frac{1}{m_k} - \frac{1}{n})C_2. 
\end{align*}

From the choice of $\mb{d}^{(k)}$ in the algorithm, we obtain
\begin{align*}
\langle \mb{g}^{(k)}, \mb{d}^{(k)} \rangle \leq \frac{1}{2}(\langle \mb{g}^{(k)}, \mb{p}^\k - \mb{x}^\k \rangle + \langle \mb{g}^{(k)}, \mb{x}^\k - \mb{u}^\k \rangle) = \frac{1}{2}\langle \mb{g}^{(k)}, \mb{p}^{(k)} - \mb{u}^{(k)} \rangle \leq 0.
\end{align*}
Hence, we can lower bound $\mr{E}\langle \mb{g}^{(k)}, \mb{d}^{(k)} \rangle^2$ by 
\begin{align*}
\mr{E}\langle \mb{g}^{(k)}, \mb{d}^{(k)} \rangle^2 &\geq \frac{1}{4} \mr{E}\langle \mb{g}^{(k)}, \mb{u}^{(k)} - \mb{p}^\k \rangle^2 \\
&=\frac{1}{4} \mr{E}\{\mr{E}(\langle \mb{g}^{(k)}, \mb{u}^{(k)} - \mb{p}^\k \rangle^2 \vert \mc{F}^\k)\} \tag{\text{definition of conditional expectation}} \\
&\geq \frac{1}{4} \mr{E}\{\mr{E}(\langle \mb{g}^{(k)}, \mb{u}^{(k)} - \mb{p}^\k \rangle \vert \mc{F}^\k)\}^2 \tag{\text{Jensen's inequality}} \\
&=  \frac{1}{4}\mr{E}\{\mr{E}(\max_{\mb{p} \in V, \mb{u} \in U^\k}\langle \mb{g}^{(k)}, \mb{u} - \mb{p} \rangle \vert \mc{F}^\k)\}^2 \tag{definition of $\mb{p}^\k$ and $\mb{u}^\k$} \\
&\geq \frac{1}{4} \mr{E}\{\max_{\mb{p} \in V, \mb{u} \in U^\k}\mr{E}(\langle \mb{g}^{(k)}, \mb{u} - \mb{p} \rangle \vert \mc{F}^\k)\}^2 \tag{Jensen's inequality} \\
& = \frac{1}{4} \mr{E}\{\max_{\mb{p} \in V, \mb{u} \in U^\k}\langle \nabla F(\mb{x}^\k), \mb{u} - \mb{p} \rangle\}^2 \tag{unbiasedness of stochastic gradient}\\
& \geq \frac{1}{4} \mr{E}\{\frac{\Omega_{\mathcal{P}}}{\vert U^{(k)} \vert} \frac{\langle \nabla F(\mathbf{x}^\k), \mathbf{x}^\k - \mathbf{x}^* \rangle}{\norm{\mathbf{x}^\k - \mb{x}^*}}\}^2 \tag{by Corollary \ref{linear_approx_lower_bound}} \\
& \geq \frac{\Omega_{\mathcal{P}}^2}{4N^2 \kappa} \mr{E}\{F(\mb{x}^\k) - F^*\} \tag{by Lemma \ref{strong_cvx_lemma} and $\vert U^{(k)} \vert \leq N$}.
\end{align*}
Similarly, we can upper bound $\mr{E}\langle \mb{g}^{(k)}, \mb{d}^{(k)} \rangle$ by 
\begin{align*}
\mr{E}\langle \mb{g}^{(k)}, \mb{d}^{(k)} \rangle & \leq \frac{1}{2} \mr{E}\langle \mb{g}^{(k)}, \mb{p}^{(k)} - \mb{u}^{(k)} \rangle \\
& \leq  \frac{1}{2} \mr{E}\langle \mb{g}^{(k)}, \mb{x}^* - \mb{x}^{(k)}\rangle \tag{definition of $\mb{p}^\k$ and $\mb{u}^\k$} \\
& = \frac{1}{2}\mr{E}\langle \nabla F(\mb{x}^\k), \mb{x}^* - \mb{x}^\k \rangle \tag{unbiasedness of stochastic gradient}\\
& \leq \frac{1}{2}\mr{E}\{F(\mb{x}^*) - F(\mb{x}^\k)\} \tag{Convexity of $F(\cdot)$}.
\end{align*}
With the above bounds, we can separate our analysis into the following four cases at iteration $k$
\begin{description}
\item[($A^{(k)}$)]\hspace{0.5cm} $\gamma_{\max}^\k \geq 1$ and $\gamma^\k \leq 1$ .
\item[($B^{(k)}$)]\hspace{0.5cm} $\gamma_{\max}^\k \geq 1$ and $\gamma^\k \geq 1$.
\item[($C^{(k)}$)]\hspace{0.5cm} $\gamma_{\max}^\k < 1$ and $\gamma^\k < \gamma_{\max}^\k$.
\item[($D^{(k)}$)]\hspace{0.5cm} $\gamma_{\max}^\k < 1$ and $\gamma^\k = \gamma_{\max}^\k$.
\end{description}
By the descent lemma, we have 
\begin{align}
F(\mathbf{x}^{(k + 1)}) = F(\mathbf{x}^{(k)} + \gamma^{(k)}\mathbf{d}^{(k)}) &\leq F(\mathbf{x}^{(k)}) + \gamma^{(k)}\langle \nabla F(\mathbf{x^{(k)}}), \mathbf{d}^{(k)} \rangle + \frac{L(\gamma^{(k)})^2}{2}\norm{\mathbf{d^{(k)}}}^2 \nonumber \\
&= F(\mathbf{x}^{(k)}) + \gamma^{(k)}\langle \mb{g}^{(k)}, \mathbf{d}^{(k)} \rangle + \frac{L(\gamma^{(k)})^2}{2}\norm{\mathbf{d^{(k)}}}^2 + \gamma^{(k)}\langle \nabla F(\mathbf{x^{(k)}}) - \mb{g}^{(k)}, \mathbf{d}^{(k)} \rangle. \label{basic_neq}
\end{align}

\noindent In case $(A^{(k)})$, let $\delta_{A^{(k)}}$ denote the indicator function for this case. Then
\begin{align*}
\delta_{A^{(k)}}\{F(\mathbf{x}^{(k + 1)}) - F^*\} &\leq \delta_{A^{(k)}} \{ F(\mathbf{x}^{(k)}) - F^* + \gamma^{(k)}\langle \mb{g}^{(k)}, \mathbf{d}^{(k)} \rangle + \frac{L(\gamma^{(k)})^2}{2}\norm{\mathbf{d^{(k)}}}^2 + \gamma^{(k)}\langle \nabla F(\mathbf{x^{(k)}}) - \mb{g}^{(k)}, \mathbf{d}^{(k)} \rangle \} \\
&= \delta_{A^{(k)}}\{F(\mathbf{x}^{(k)}) - F^* - \frac{\langle \mb{g}^\k, \mb{d}^\k \rangle^2}{2L\norm{\mb{d}^\k}^2} + \frac{\langle \mb{g}^\k, -\mb{d}^\k \rangle}{L\norm{\mb{d}^\k}^2}\langle \nabla F(\mb{x}^\k) -\mb{g}^\k, \mb{d}^\k \rangle \}\tag{definition of $\gamma^\k$ in case $A^{(k)}$}  \\
&\leq \delta_{A^{(k)}}\{ F(\mathbf{x}^{(k)}) - F^* -  \frac{\langle \mb{g}^\k, \mb{d}^\k \rangle^2}{2L\norm{\mb{d}^\k}^2} + \frac{\norm{\mb{g}^\k}\norm{\mb{d}^\k}}{L\norm{\mb{d}^\k}^2}\norm{\nabla F(\mb{x}^\k) -\mb{g}^\k}\norm{\mb{d}^\k }\} \tag{Cauchy-Schwarz}\\
&\leq \delta_{A^{(k)}} \{ F(\mathbf{x}^{(k)}) - F^* -  \frac{\langle \mb{g}^\k, \mb{d}^\k \rangle^2}{2LD^2} + \frac{G}{L}\norm{\nabla F(\mb{x}^\k) -\mb{g}^\k}\} \\
&\leq \delta_{A^{(k)}} \{ F(\mathbf{x}^{(k)}) - F^* -  \frac{\langle \mb{g}^\k, \mb{d}^\k \rangle^2}{2D\max (G, LD)} + \frac{G}{L}\norm{\nabla F(\mb{x}^\k) -\mb{g}^\k}\}
\end{align*}

\noindent In case $(B^{(k)})$, since $\gamma^\k > 1$, we have 
\begin{align}
&-\langle \mb{g}^\k, \mb{d}^\k \rangle > L \norm{\mb{d}^\k}^2 \quad \quad \quad \text{and} \label{case_b_in_prod_bound}\\
&\gamma^{(k)}\langle \mb{g}^{(k)}, \mathbf{d}^{(k)} \rangle + \frac{L(\gamma^{(k)})^2}{2}\norm{\mathbf{d^{(k)}}}^2 \leq \langle \mb{g}^{(k)}, \mathbf{d}^{(k)} \rangle + \frac{L}{2}\norm{\mathbf{d^{(k)}}}^2. \label{case_b_monotone}
\end{align}
Use $\delta_{B^{(k)}}$ to denote the indicator function for this case. Then,
\begin{align*}
\delta_{B^{(k)}} \{F(\mathbf{x}^{(k + 1)}) - F^* \}&\leq \delta_{B^{(k)}} \{F(\mathbf{x}^{(k)}) - F^* + \gamma^{(k)}\langle \nabla F(\mathbf{x^{(k)}}), \mathbf{d}^{(k)} \rangle + \frac{L(\gamma^{(k)})^2}{2}\norm{\mathbf{d^{(k)}}}^2\}  \\
&= \delta_{B^{(k)}} \{F(\mathbf{x}^{(k)}) - F^* + \gamma^{(k)}\langle \mb{g}^{(k)}, \mathbf{d}^{(k)} \rangle + \frac{L(\gamma^{(k)})^2}{2}\norm{\mathbf{d^{(k)}}}^2 + \gamma^{(k)}\langle \nabla F(\mathbf{x^{(k)}}) - \mb{g}^{(k)}, \mathbf{d}^{(k)} \rangle\}\\
&\leq \delta_{B^{(k)}} \{F(\mathbf{x}^{(k)}) - F^* +\langle \mb{g}^{(k)}, \mathbf{d}^{(k)} \rangle + \frac{L}{2}\norm{\mathbf{d^{(k)}}}^2 + \gamma^{(k)}\langle \nabla F(\mathbf{x^{(k)}}) - \mb{g}^{(k)}, \mathbf{d}^{(k)} \rangle \} \tag{ by (\ref{case_b_monotone})}\\
&\leq \delta_{B^{(k)}} \{ F(\mathbf{x}^{(k)}) - F^* + \frac{1}{2}\langle \mb{g}^{(k)}, \mathbf{d}^{(k)} \rangle +\frac{\langle \mb{g}^{(k)}, -\mb{d}^{(k)}\rangle}{L\norm{\mb{d}^{(k)}}^2} \langle \nabla F(\mathbf{x^{(k)}}) - \mb{g}^{(k)}, \mathbf{d}^{(k)} \rangle \} \tag{by (\ref{case_b_in_prod_bound})} \\
&\leq \delta_{B^{(k)}} \{ F(\mathbf{x}^{(k)}) - F^* + \frac{1}{2}\langle \mb{g}^{(k)}, \mathbf{d}^{(k)} \rangle + \frac{G}{L}\norm{\nabla F(\mathbf{x^{(k)}}) - \mb{g}^{(k)}} \} \tag{Cauchy-Schwarz} \\
&=\delta_{B^{(k)}}\{ F(\mathbf{x}^{(k)}) - F^* - \frac{GD}{2} \frac{\langle -\mb{g}^{(k)}, \mathbf{d}^{(k)} \rangle}{GD} + \frac{G}{L}\norm{\nabla F(\mathbf{x^{(k)}}) - \mb{g}^{(k)}} \}
\end{align*}
Note that $0 \leq \langle -\mb{g}^{(k)}, \mathbf{d}^{(k)} \rangle /(GD) \leq 1$, as a result,
\begin{align*}
\delta_{B^{(k)}} \{F(\mathbf{x}^{(k + 1)}) - F^* \} &\leq \delta_{B^{(k)}}\{ F(\mathbf{x}^{(k)}) - F^* - \frac{GD}{2} \frac{\langle -\mb{g}^{(k)}, \mathbf{d}^{(k)} \rangle^2}{G^2D^2} + \frac{G}{L}\norm{\nabla F(\mathbf{x^{(k)}}) - \mb{g}^{(k)}} \} \\
&= \delta_{B^{(k)}}\{ F(\mathbf{x}^{(k)}) - F^* - \frac{\langle \mb{g}^{(k)}, \mathbf{d}^{(k)} \rangle^2}{2GD} + \frac{G}{L}\norm{\nabla F(\mathbf{x^{(k)}}) - \mb{g}^{(k)}} \} \\
&\leq  \delta_{B^{(k)}}\{ F(\mathbf{x}^{(k)}) - F^* - \frac{\langle \mb{g}^{(k)}, \mathbf{d}^{(k)} \rangle^2}{2D\max (G, LD)} + \frac{G}{L}\norm{\nabla F(\mathbf{x^{(k)}}) - \mb{g}^{(k)}} \} \\
\end{align*}
In case $(C^{(k)})$, let $\delta_{C^{(k)}}$ be the indicator function for this case and we can use exactly the same argument as in case (A) to obtain the following inequality 
\begin{align*}
\delta_{C^{(k)}} \{F(\mathbf{x}^{(k + 1)}) - F(\mb{x}^*)\}  \leq \delta_{C^{(k)}} \{ F(\mathbf{x}^{(k)}) - F^* -  \frac{\langle \mb{g}^\k, \mb{d}^\k \rangle^2}{2D\max (G, LD)} + \frac{G}{L}\norm{\nabla F(\mb{x}^\k) -\mb{g}^\k}\}
\end{align*}

\noindent Case $(D^{(k)})$ is the so called ``drop step" in the conditional gradient algorithm with away-steps. Use $\delta_{D^{(k)}}$ to denote the indicator function for this case. Note that $\gamma^\k = \gamma_{\max}^\k \leq -\langle \mb{g}^\k, \mb{d}^\k \rangle / (L\norm{\mb{d}^\k}^2)$ in this case, then we have
\begin{align*}
\delta_{D^{(k)}} \{(F(\mathbf{x}^{(k + 1)}) - F^*) \} &\leq \delta_{D^{(k)}}\{ F(\mathbf{x}^{(k)}) - F^* + \gamma^{(k)}\langle \nabla F(\mathbf{x^{(k)}}), \mathbf{d}^{(k)} \rangle + \frac{L(\gamma^{(k)})^2}{2}\norm{\mathbf{d^{(k)}}}^2 \} \\
&= \delta_{D^{(k)}}\{ F(\mathbf{x}^{(k)}) - F^* + \gamma^{(k)}\langle \mb{g}^{(k)}, \mathbf{d}^{(k)} \rangle + \frac{L(\gamma^{(k)})^2}{2}\norm{\mathbf{d^{(k)}}}^2 + \gamma^{(k)}\langle \nabla F(\mathbf{x^{(k)}}) - \mb{g}^{(k)}, \mathbf{d}^{(k)} \rangle \}\\
&\leq \delta_{D^{(k)}} \{ F(\mathbf{x}^{(k)}) - F^* + \frac{\gamma^\k}{2}\langle \mb{g}^{(k)}, \mathbf{d}^{(k)} \rangle + \frac{\langle \mb{g}^{(k)}, \mathbf{d}^{(k)} \rangle}{L\norm{\mb{d}^\k}^2}\norm{\nabla F(\mathbf{x^{(k)}}) - \mb{g}^{(k)}}\norm{\mb{d}^\k} \} \\
&\leq \delta_{D^{(k)}} \{ F(\mathbf{x}^{(k)}) - F^* + \frac{G}{L}\norm{\nabla F(\mb{x}^\k) -\mb{g}^\k}\} \\
&= \delta_{D^{(k)}} \{ F(\mathbf{x}^{(k)}) - F^* - \frac{\langle \mb{g}^\k, \mb{d}^\k \rangle^2}{2D\max (G, LD)} + \frac{\langle \mb{g}^\k, \mb{d}^\k \rangle^2}{2D\max (G, LD)} + \frac{G}{L}\norm{\nabla F(\mb{x}^\k) -\mb{g}^\k}\} \\
&\leq \delta_{D^{(k)}} \{ F(\mathbf{x}^{(k)}) - F^* - \frac{\langle \mb{g}^\k, \mb{d}^\k \rangle^2}{2D\max (G, LD)} + \frac{G}{L}\norm{\nabla F(\mb{x}^\k) -\mb{g}^\k} \} + \delta_{D^{(k)}} \frac{G^2}{2L}.
\end{align*}
By the fact that $\delta_{A^{(k)}} + \delta_{B^{(k)}} + \delta_{C^{(k)}} + \delta_{D^{(k)}} = 1$, we have
\begin{align*}
F(\mb{x}^{(k+1)}) - F^* \leq F(\mb{x}^\k) - F^* - \frac{\langle \mb{g}^\k, \mb{d}^\k \rangle^2}{2D\max (G, LD)} + \frac{G}{L}\norm{\nabla F(\mb{x}^\k) -\mb{g}^\k} +  \delta_{D^{(k)}} \frac{G^2}{2L}
\end{align*}
Taking expectation on both sides yields
\begin{align*}
\mr{E}\{F(\mb{x}^{(k+1)}) - F^*\} &\leq \mr{E}\{F(\mb{x}^\k) - F^*\} - \frac{\mr{E}(\langle \mb{g}^\k, \mb{d}^\k \rangle^2)}{2D\max (G, LD)}+ \frac{G}{L} \mr{E}\{\norm{\nabla F(\mb{x}^\k) -\mb{g}^\k}\} + \frac{G^2}{2L}\mr{P}(D^{(k)}) \\
&\leq  \mr{E}\{F(\mb{x}^\k) - F^*\} - \frac{\Omega_{\mathcal{P}}^2}{8N^2 \kappa D\max (G, LD)} \mr{E}\{F(\mb{x}^\k) - F^*\} \\
&\quad  + \frac{G\sqrt{C_2}}{L} \sqrt{(\frac{1}{m_k} - \frac{1}{n})} + \frac{G^2}{2L}\mr{P}(D^{(k)})
\end{align*}

Denote $\rho = \Omega_{\mathcal{P}}^2 /\{8N^2 \kappa D\max (G, LD)\}$. By the definition of $m_k$ and the assumption on $\mr{P}(D^{(k)})$ we have
\begin{align*}
\mathbb{E}\{F(\mathbf{x}^{(k+1)}) - F^*\} &\leq (1 - \rho)\mr{E}\{F(\mathbf{x^{(k)}}) - F^*\} + \frac{G\sqrt{C_2}}{L}(1 - \rho)^{\alpha k} + \frac{G^2}{2L}(1 - \rho)^{\beta k} \\
&\leq (1 - \rho)^k\{F(\mb{x}^{(1)}) - F^*\} + (\frac{G\sqrt{C_2}}{L}(1 - \rho)^{\alpha k}\sum_{m=0}^{k-1}(1 - \rho)^{m(1 - \alpha)} + \frac{G^2}{2L}(1 - \rho)^{\beta k}\sum_{m = 0}^{k-1}(1 - \rho)^{m(1 - \beta)})  \\
&\leq (1 - \rho)^k\{F(\mb{x}^{(1)}) - F^*\} + (\frac{G\sqrt{C_2}}{L}\frac{(1 - \rho)^{\alpha k}}{1 - (1 - \rho)^{1 -\alpha}} + \frac{G^2}{2L}\frac{(1 - \rho)^{\beta k}}{1 - (1 - \rho)^{1 - \beta}} ) \\
&\leq (1 - \rho)^{\min (\alpha, \beta) k}\{F(\mb{x}^{(1)}) - F^*  + \frac{G\sqrt{C_2}}{L\{1 - (1 - \rho)^{1 -\alpha}\}} + \frac{G^2}{2L\{1 - (1 - \rho)^{1 - \beta}\}}\} \\
&= (1 - \rho)^{\min (\alpha, \beta) k} C_3
\end{align*}
\end{proof}
\section{Proof of Theorem 2}
\begin{proof}
Similar to the proof of Theorem 1, the expectation and variance of the stochastic gradient $\mb{g}^{(k)}$ condition on $\mr{F}^\k$ are given by
\begin{align*}
\mr{E}(\mb{g}^\k \vert \mc{F}^\k) = \nabla F(\mb{x}^\k) \\
\end{align*}
and for $m_k \leq n$
\begin{align*}
\mr{E}\{\norm{\mb{g}^\k - \nabla F(\mb{x}^\k)}^2 \vert \mc{F}^\k\}&= (\frac{1}{m_k} - \frac{1}{n})[\frac{1}{n}\sum_{i=1}^n \{f_i'(\mb{a}_i^\top\mb{x}^\k)\}^2\mb{a}_i^\top\mb{a}_i - \frac{1}{n(n-1)}\sum_{i \neq j}f_i'(\mb{a}_i^\top\mb{x}^\k)f_j'(\mb{a}_i^\top\mb{x}^\k)\mb{a}^\top_i\mb{a}_j]\\
&\leq (\frac{1}{m_k} - \frac{1}{n})C_2.
\end{align*}
By the descent lemma, the block coordinate structure and the sampling scheme we have 
\begin{align*}
F(\mathbf{x}^{(k + 1)}) &= F(\mathbf{x}^{(k)} + \sum_{i= 1}^r \gamma^{(k)}_{[l_i]}\mb{U}_{[l_i]}\mathbf{d}^{(k)}_{[l_i]}) \leq F(\mathbf{x}^{(k)}) + \sum_{i= 1}^r \gamma^{(k)}_{[l_i]}\langle \nabla F(\mathbf{x^{(k)}}), \mb{U}_{[l_i]}\mathbf{d}^{(k)}_{[l_i]} \rangle + \frac{L}{2}\norm{\sum_{i= 1}^r \gamma^{(k)}_{[l_i]}\mb{U}_{[l_i]}\mathbf{d}^{(k)}_{[l_i]}}^2 \\
&= F(\mathbf{x}^{(k)}) +\sum_{i= 1}^r \gamma^{(k)}_{[l_i]}\langle \mb{g}^{(k)}, \mb{U}_{[l_i]}\mathbf{d}^{(k)}_{[l_i]} \rangle + \frac{L}{2}\sum_{i=1}^r (\gamma^{(k)}_{[l_i]})^2\norm{\mathbf{d}^{(k)}_{[l_i]}}^2 + \sum_{i= 1}^r \gamma^{(k)}_{[l_i]} \langle \nabla F(\mb{x}^\k) - \mb{g}^\k,  \mb{U}_{[l_i]}\mathbf{d}^{(k)}_{[l_i]} \rangle \\
&= F(\mathbf{x}^{(k)}) +\sum_{i= 1}^r \gamma^{(k)}_{[l_i]}\langle \mb{U}_{[l_i]}^\top\mb{g}^{(k)}, \mathbf{d}^{(k)}_{[l_i]} \rangle + \frac{L}{2}\sum_{i=1}^r (\gamma^{(k)}_{[l_i]})^2\norm{\mathbf{d}^{(k)}_{[l_i]}}^2 + \sum_{i= 1}^r \gamma^{(k)}_{[l_i]} \langle \nabla F(\mb{x}^\k) - \mb{g}^\k,  \mb{U}_{[l_i]}\mathbf{d}^{(k)}_{[l_i]} \rangle.
\end{align*}

Denote $\mb{g}_{[l_i]}^\k = \mb{U}_{[l_i]}^\top \mb{g}^\k$. From the choice of $\mb{d}^\k_{[l_i]}$ in the algorithm,  we have the following inequality
\begin{align*}
\langle \mb{g}^\k_{[l_i]}, \mb{d}^\k_{[l_i]}\rangle &\leq \frac{1}{2}(\langle \mb{g}^\k_{[l_i]}, \mb{p}^{(k)}_{[l_i]} - \mb{x}^\k_{[l_i]} \rangle + \langle \mb{g}^\k_{[l_i]}, \mb{x}^\k_{[l_i]} - \mb{u}^{(k)}_{[l_i]}\rangle) = \frac{1}{2}(\langle \mb{g}^\k_{[l_i]}, \mb{p}^{(k)}_{[l_i]} - \mb{u}^\k_{[l_i]} \rangle) \leq 0.
\end{align*}
Thus, similar to the argument in the proof of Theorem 1, we can derive the following block coordinate bounds
\begin{align*}
\mr{E}(\langle  \mb{g}^\k_{[l_i]}, \mb{d}^\k_{[l_i]}\rangle) &\leq \frac{1}{2}\mr{E}(\langle \mb{g}^\k_{[l_i]}, \mb{p}^{(k)}_{[l_i]}  \rangle)\\
&\leq \frac{1}{2}\mr{E}(\langle \mb{U}_{[l_i]}^\top\mb{g}^\k, \mb{x}^*_{[l_i]} - \mb{x}^{(k)}_{[l_i]} \rangle) \tag{definition of $\mb{p}^\k_{[l_i]}$}\\
&= \frac{1}{2}\mr{E}\langle \mb{U}_{[l_i]}^\top\mb{g}^\k, \mb{U}_{[l_i]}^\top(\mb{x}^* - \mb{x}^{(k)}) \rangle) \\
&= \frac{1}{2}\mr{E}\langle \mb{U}_{[l_i]}\mb{U}_{[l_i]}^\top\mb{g}^\k, \mb{x}^* - \mb{x}^{(k)} \rangle) \\
&= \frac{1}{2q}\mr{E}\langle\mb{g}^\k, \mb{x}^* - \mb{x}^{(k)} \rangle) \tag{independence of sampled gradient and sampled block} \\
&= \frac{1}{2q}\mr{E}\langle\nabla F(\mb{x}^\k), (\mb{x}^* - \mb{x}^{(k)})\rangle \tag{unbiasedness of the stochastic gradient}.
\end{align*}
At iteration $k$, write $U^{(k)} = U_{[l_1]}^\k \times U_{[l_2]}^\k \times \cdots \times U_{[l_q]}^\k$, that is, the set of vertices used to represent $\mb{x}^{(k)}$, then
\begin{align*}
\mr{E}(\langle  \mb{g}^\k_{[l_i]}, \mb{d}^\k_{[l_i]}\rangle^2) &\geq \frac{1}{4}\mr{E}\{ \mr{E}(\langle  \mb{g}^\k_{[l_i]}, \mb{u}^\k_{[l_i]} - \mb{p}^\k_{[l_i]}\rangle^2 \vert \mc{F}^\k)\} \\
&\geq \frac{1}{4}\mr{E}\{\mr{E}(\langle  \mb{g}^\k_{[l_i]}, \mb{u}^\k_{[l_i]} - \mb{p}^\k_{[l_i]}\rangle \vert \mc{F}^\k)\}^2 \tag{Jensen's inequality} \\
&= \frac{1}{4}\mr{E}\{\frac{1}{q}\sum_{i = 1}^q\mr{E}(\langle \mb{g}_{[i]}^\k, \mb{u}^\k_{[i]} - \mb{p}^\k_{[i]}\rangle \vert \mc{F}^\k)\}^2\\
&= \frac{1}{4}\mr{E}\{\frac{1}{q}\sum_{i=1}^q \mr{E}( \max_{\mb{p}_{[i]} \in V_{[i]}, \mb{u}_{[i]} \in U_{[i]}^\k}\langle \mb{g}^\k_{[i]}, \mb{u}_{[i]} - \mb{p}_{[i]}\rangle \vert \mc{F}^\k)\}^2\\
&\geq \frac{1}{4}\mr{E}[\frac{1}{q}\sum_{i=1}^q\max_{\mb{p}_{[i]} \in V_{[i]}, \mb{u}_{[i]} \in U_{[i]}^\k}\{\mr{E}(\langle \mb{g}^\k_{[i]}, \mb{u}_{[i]} - \mb{p}_{[i]}\rangle \vert \mc{F}^\k)\}]^2 \tag{Jensen's inequality} \\
&\geq \frac{1}{4}\mr{E}[\frac{1}{q}\max_{\tilde{\mb{p}} \in V, \tilde{\mb{u}} \in U^\k}\{\sum_{i=1}^q \mr{E}(\langle \mb{U}_{[i]}^\top\mb{g}^\k, \mb{U}_{[i]}^\top(\tilde{\mb{u}} - \tilde{\mb{p}}) \rangle \vert \mc{F}^\k) \}]^2 \\
&= \frac{1}{4}\mr{E}[\frac{1}{q}\max_{\tilde{\mb{p}} \in V, \tilde{\mb{u}} \in U^\k}\{\mr{E}(\langle \mb{g}^\k, \tilde{\mb{u}} -\tilde{\mb{p}}\rangle\vert \mc{F}^\k)\}]^2 \tag{Block Coordinate Structure}\\
&= \frac{1}{4q^2}\mr{E}\{\max_{\tilde{\mb{p}} \in V, \tilde{\mb{u}} \in U^\k}(\langle \nabla F(\mb{x}^\k), \tilde{\mb{u}} - \tilde{\mb{p}} \rangle)\}^2 \\
&\geq \frac{\Omega_{\mathcal{P}}^2}{4N^2 \kappa q^2} \mr{E}\{F(\mb{x}^\k) - F^*\}
\end{align*}

Now consider the four cases of step sizes in proof of Theorem 1 for iteration $(k)$ and block $l_i$, that is, 
\begin{description}
\item[($A^{(k)}_{[l_i]}$)]\hspace{0.5cm} $\bar{\gamma}^\k_{[l_i]}\geq 1$ and $\gamma^\k_{[l_i]} \leq 1$，
\item[($B^{(k)}_{[l_i]}$)]\hspace{0.5cm} $\bar{\gamma}^\k_{[l_i]} \geq 1$ and $\gamma^\k_{[l_i]} > 1$，
\item[($C^{(k)}_{[l_i]}$)]\hspace{0.5cm} $\bar{\gamma}^\k_{[l_i]} < 1$ and $\gamma^\k_{[l_i]} < \bar{\gamma}^\k_{[l_i]}$，
\item[($D^{(k)}_{[l_i]}$)]\hspace{0.5cm} $\bar{\gamma}^\k_{[l_i]} < 1$ and $\gamma^\k_{[l_i]} = \bar{\gamma}^\k_{[l_i]}$.
\end{description}
In case ($A^{(k)}_{[l_i]}$), let $\delta_{A^{(k)}_{[l_i]}}$ be the indicator function for this case, then we have
\begin{align*}
&\quad\quad \delta_{A^{(k)}_{[l_i]}} \{\gamma^{(k)}_{[l_i]}\langle \mb{U}_{[l_i]}^\top\mb{g}^{(k)}, \mathbf{d}^{(k)}_{[l_i]} \rangle + \frac{L}{2}(\gamma^{(k)}_{[l_i]})^2\norm{\mathbf{d}^{(k)}_{[l_i]}}^2 + \gamma^{(k)}_{[l_i]} \langle \nabla F(\mb{x}^\k) - \mb{g}^\k,  \mb{U}_{[l_i]}\mathbf{d}^{(k)}_{[l_i]} \rangle\} \\
&= \delta_{A^{(k)}_{[l_i]}} \{-\frac{\langle \mb{U}_{[l_i]}^\top\mb{g}^{(k)}, \mathbf{d}^{(k)}_{[l_i]} \rangle^2}{2L\norm{\mb{d}_{[l_i]}^\k}^2} + \frac{\langle \mb{U}_{[l_i]}^\top\mb{g}^{(k)}, \mathbf{d}^{(k)}_{[l_i]} \rangle}{L\norm{\mb{d}_{[l_i]}^\k}^2} \langle \nabla F(\mb{x}^\k) - \mb{g}^\k,  \mb{U}_{[l_i]}\mathbf{d}^{(k)}_{[l_i]} \rangle\}\\
&\leq \delta_{A^{(k)}_{[l_i]}} \{-\frac{\langle \mb{g}^{(k)}_{[l_i]}, \mathbf{d}^{(k)}_{[l_i]} \rangle^2}{2L\norm{\mb{d}_{[l_i]}^\k}^2} + \frac{G}{L}\norm{\mb{U}_{[l_i]}^\top(\nabla F(\mb{x}^\k) - \mb{g}^\k)} \} \\
&\leq \delta_{A^{(k)}_{[l_i]}} \{-\frac{\langle \mb{g}^{(k)}_{[l_i]}, \mathbf{d}^{(k)}_{[l_i]} \rangle^2}{2D \max (LD, G)} + \frac{G}{L}\norm{\mb{U}_{[l_i]}^\top(\nabla F(\mb{x}^\k) - \mb{g}^\k)} \}.
\end{align*}
In case ($B^{(k)}_{[l_i]}$), since $\gamma_{[l_i]}^\k \geq 1$ we have
\begin{align*}
&-\langle \mb{U}_{[l_i]}\mb{g}^\k, \mb{d}^\k_{[l_i]} \rangle > L\norm{\mb{d}_{[l_i]}^\k}^2 \quad\quad \text{and hence} \\
&\gamma_{[l_i]}^\k\langle\mb{U}_{[l_i]}\mb{g}^\k, \mb{d}^\k_{[l_i]}\rangle + \frac{L(\gamma_{[l_i]}^\k)^2}{2}\norm{\mb{d}^\k_{[l_i]}}^2 \leq  \langle\mb{U}_{[l_i]}\mb{g}^\k, \mb{d}^\k_{[l_i]}\rangle + \frac{L}{2}\norm{\mb{d}_{[l_i]}^\k}^2.
\end{align*}
Let $\delta_{B^{(k)}_{[l_i]}}$ be the indicator function for this case, then
\begin{align*}
&\quad\quad \delta_{B^{(k)}_{[l_i]}} \{\gamma^{(k)}_{[l_i]}\langle \mb{U}_{[l_i]}^\top\mb{g}^{(k)}, \mathbf{d}^{(k)}_{[l_i]} \rangle + \frac{L}{2}(\gamma^{(k)}_{[l_i]})^2\norm{\mathbf{d}^{(k)}_{[l_i]}}^2 + \gamma^{(k)}_{[l_i]} \langle \nabla F(\mb{x}^\k) - \mb{g}^\k,  \mb{U}_{[l_i]}\mathbf{d}^{(k)}_{[l_i]} \rangle\} \\
&\leq   \delta_{B^{(k)}_{[l_i]}} \{\langle \mb{U}_{[l_i]}^\top\mb{g}^{(k)}, \mathbf{d}^{(k)}_{[l_i]} \rangle + \frac{L}{2}\norm{\mathbf{d}^{(k)}_{[l_i]}}^2 + \gamma^{(k)}_{[l_i]} \langle \nabla F(\mb{x}^\k) - \mb{g}^\k,  \mb{U}_{[l_i]}\mathbf{d}^{(k)}_{[l_i]} \rangle \} \\
&\leq \delta_{B^{(k)}_{[l_i]}} \{\frac{1}{2}\langle \mb{U}_{[l_i]}^\top\mb{g}^{(k)}, \mathbf{d}^{(k)}_{[l_i]} \rangle + \frac{G}{L}\norm{\mb{U}_{[l_i]}^\top(\nabla  F(\mb{x}^\k) - \mb{g}^\k)} \} \\
&=  \delta_{B^{(k)}_{[l_i]}} \{- \frac{GD}{2} \frac{\langle -\mb{U}_{[l_i]}^\top\mb{g}^{(k)}, \mathbf{d}^{(k)}_{[l_i]} \rangle}{GD}+ \frac{G}{L}\norm{\mb{U}_{[l_i]}^\top (\nabla F(\mb{x}^\k) - \mb{g}^\k)}  \} \\
&\leq \delta_{B^{(k)}_{[l_i]}} \{-\frac{1}{2}\frac{\langle \mb{U}_{[l_i]}^\top\mb{g}^{(k)}, \mathbf{d}^{(k)}_{[l_i]} \rangle^2}{GD} +  \frac{G}{L}\norm{\mb{U}_{[l_i]}^\top(\nabla F(\mb{x}^\k) - \mb{g}^\k)}\} \\
&\leq \delta_{B^{(k)}_{[l_i]}} \{-\frac{\langle \mb{g}^{(k)}_{[l_i]}, \mathbf{d}^{(k)}_{[l_i]} \rangle^2}{2D\max (LD, G)} +  \frac{G}{L}\norm{\mb{U}_{[l_i]}^\top(\nabla F(\mb{x}^\k) - \mb{g}^\k)} \}
\end{align*}
In case ($C^{(k)}_{[l_i]}$), let $\delta_{C^{(k)}_{[l_i]}}$ denote the indicator function for this case and we can use exactly the same argument as in case ($A^{(k)}_{[l_i]}$) to obtain the following inequality
\begin{align*}
&\quad\quad \delta_{C^{(k)}_{[l_i]}} \{\gamma^{(k)}_{[l_i]}\langle \mb{U}_{[l_i]}^\top\mb{g}^{(k)}, \mathbf{d}^{(k)}_{[l_i]} \rangle + \frac{L}{2}(\gamma^{(k)}_{[l_i]})^2\norm{\mathbf{d}^{(k)}_{[l_i]}}^2 + \gamma^{(k)}_{[l_i]} \langle \nabla F(\mb{x}^\k) - \mb{g}^\k,  \mb{U}_{[l_i]}\mathbf{d}^{(k)}_{[l_i]} \rangle\} \\
&\leq \delta_{C^{(k)}_{[l_i]}} \{-\frac{\langle \mb{g}^{(k)}_{[l_i]}, \mathbf{d}^{(k)}_{[l_i]} \rangle^2}{2D\max (LD, G)} + \frac{G}{L}\norm{\mb{U}_{[l_i]}^\top(\nabla F(\mb{x}^\k) - \mb{g}^\k)} \}.
\end{align*}
In case ($D^{(k)}_{[l_i]}$), let $\delta_{D^{(k)}_{[l_i]}}$ denote the indicator function for this case and by the fact that $\gamma^\k_{[l_i]} = \bar{\gamma}^\k_{[l_i]} \leq -\langle \mb{g}^\k_{[l_i]}, \mb{d}^\k_{[l_i]} \rangle / (L \norm{\mb{d}_{[l_i]}}^2)$. Then,
\begin{align*}
&\quad\quad \delta_{D^{(k)}_{[l_i]}} \{\gamma^{(k)}_{[l_i]}\langle \mb{U}_{[l_i]}^\top\mb{g}^{(k)}, \mathbf{d}^{(k)}_{[l_i]} \rangle + \frac{L}{2}(\gamma^{(k)}_{[l_i]})^2\norm{\mathbf{d}^{(k)}_{[l_i]}}^2 + \gamma^{(k)}_{[l_i]} \langle \nabla F(\mb{x}^\k) - \mb{g}^\k,  \mb{U}_{[l_i]}\mathbf{d}^{(k)}_{[l_i]} \rangle\}\\
&\leq \delta_{D^{(k)}_{[l_i]}} \{\frac{\gamma^\k_{[l_i]}}{2}\langle \mb{g}^{(k)}_{[l_i]}, \mathbf{d}^{(k)}_{[l_i]} \rangle + \frac{G}{L}\norm{\mb{U}_{[l_i]}^\top(\nabla F(\mb{x}^\k) - \mb{g}^\k)} \} \\
&\leq \delta_{D^{(k)}_{[l_i]}} \{\frac{G}{L}\norm{\mb{U}_{[l_i]}^\top(\nabla F(\mb{x}^\k) - \mb{g}^\k)} \} \\
&= \delta_{D^{(k)}_{[l_i]}}\{-\frac{\langle \mb{g}^{(k)}_{[l_i]}, \mathbf{d}^{(k)}_{[l_i]} \rangle^2}{2L\norm{\mb{d}_{[l_i]}^\k}^2} + \frac{G}{L}\norm{\mb{U}_{[l_i]}^\top(\nabla F(\mb{x}^\k) - \mb{g}^\k)} \} + \delta_{D^{(k)}_{[l_i]}} \{\frac{\langle \mb{g}^{(k)}_{[l_i]}, \mathbf{d}^{(k)}_{[l_i]} \rangle^2}{2L\norm{\mb{d}_{[l_i]}^\k}^2}\}\\
&\leq \delta_{D^{(k)}_{[l_i]}} \{-\frac{\langle \mb{g}^{(k)}_{[l_i]}, \mathbf{d}^{(k)}_{[l_i]} \rangle^2}{2D\max (LD, G)} + \frac{G}{L}\norm{\mb{U}_{[l_i]}^\top(\nabla F(\mb{x}^\k) - \mb{g}^\k)}\} + \delta_{D^{(k)}_{[l_i]}} \{\frac{G^2}{2L}\}.
\end{align*}
Then combine the above four cases, we have
\begin{align*}
\{F(\mb{x}^{(k+1)}) - F^*\} \leq \{F(\mb{x}^{(k)}) - F^*\} + \sum_{i = 1}^r\{ -\frac{\langle \mb{g}^\k_{[l_i]}, \mb{d}^\k_{[l_i]} \rangle}{2D \max (G, LD)} + \frac{G}{L}\norm{\mb{U}_{[l_i]}^\top(\nabla F(\mb{x}^\k) - \mb{g}^\k)} \} + \frac{G^2}{2L}\sum_{i=1}^r\delta_{D^{(k)}_{[l_i]}}.
\end{align*}
Take expectation on both sides of the above inequality and use the assumption that sampled blocks and sampled gradient are independent,then we have
\begin{align*}
\mr{E}\{F(\mb{x}^{(k+1)}) - F^*\} &\leq \mr{E}\{F(\mb{x}^{(k)}) - F^*\} + \frac{r}{q}\sum_{i=1}^q\{ -\frac{\mr{E}\langle \mb{g}^\k_{[i]}, \mb{d}^\k_{[i]} \rangle^2}{2D \max (G, LD)} + \frac{G}{L}\mr{E}\norm{\mb{U}_{[i]}^\top(\nabla F(\mb{x}^\k) - \mb{g}^\k)} + \frac{G^2}{2L}\mr{P}(D_{[i]}^{(k)}) \} \\
&\leq \mr{E}\{F(\mb{x}^{(k)}) - F^*\} - \frac{r\mr{E}\langle \mb{g}^\k_{[i]}, \mb{d}^\k_{[i]} \rangle^2}{2D \max (G, LD)} + \frac{rG}{qL}\mr{E}\norm{\nabla F(\mb{x}^\k) - \mb{g}^\k} + \frac{rG^2}{2qL}\sum_{i=1}^q \mr{P}(D_{[i]}^{(k)}) \\
&\leq (1 - \frac{r\Omega_{\mathcal{P}}^2}{8N^2 \kappa q^2D \max (G, LD)} )\mr{E}\{F(\mb{x}^\k) - F^*\} +\frac{rG}{qL}\sqrt{(\frac{1}{m_k} - \frac{1}{n})C_2} + \frac{rG^2}{2qL}\sum_{i=1}^q \mr{P}(D_{[i]}^{(k)}).
\end{align*}

\noindent Let $\hat{\rho} = r\Omega_{\mathcal{P}}^2 /\{8N^2 \kappa q^2D \max (G, LD)\}$, use the assumption that $\max_{i}\{ \mr{P}(D_{[i]}^{(k)})\} < (1 - \hat{\rho})^{\lambda k}$ and $m_k \geq \lceil n / (1 + n(1 - \hat{\rho})^{2\mu k})\rceil$ . Then,
\begin{align*}
\mathbb{E}\{F(\mathbf{x}^{(k+1)}) - F^*\} &\leq (1 - \hat{\rho})\mr{E}\{F(\mb{x}^\k) - F^*\} + \frac{rG\sqrt{C_2}}{qL}(1 - \hat{\rho})^{\mu k} + \frac{rG^2}{2L} (1 - \hat{\rho})^{\lambda k} \\
&\leq (1 - \hat{\rho})^k\{F(\mb{x}^{(1)}) - F^*\} + \frac{rG\sqrt{C_2}}{qL}(1 - \hat{\rho})^{\mu k}\sum_{m=0}^{k-1}(1 - \hat{\rho})^{m(1 - \mu)} + \frac{rG^2}{2L}(1 - \hat{\rho})^{\lambda k}\sum_{m = 0}^{k-1}(1 - \hat{\rho})^{m(1 - \lambda)}\\
&\leq (1 - \hat{\rho})^k\{F(\mb{x}^{(1)}) - F^*\} + (\frac{rG\sqrt{C_2}}{qL}\frac{(1 - \hat{\rho})^{\mu k}}{1 - (1 - \hat{\rho})^{1 -\mu}} + \frac{rG^2}{2L}\frac{(1 - \hat{\rho})^{\lambda k}}{1 - (1 - \hat{\rho})^{1 - \lambda}} ) \\
&\leq (1 - \hat{\rho})^{\min (\lambda, \mu) k}\{F(\mb{x}^{(1)}) - F^*  + \frac{rG\sqrt{C_2}}{qL\{1 - (1 - \hat{\rho})^{1 -\mu}\}} + \frac{rG^2}{2L\{1 - (1 - \hat{\rho})^{1 - \lambda}\}}\} \\
&= (1 - \hat{\rho})^{\min (\mu, \lambda) k} C_4
\end{align*}
\end{proof}
\bibliography{icml_fw}
\bibliographystyle{icml2016}
\end{document}